\newcommand\numberthis{\addtocounter{equation}{1}\tag{\theequation}}
\newcommand{\norm}[1]{\left\lVert#1\right\rVert}
\newcommand{\mat}[1]{\underline{\mathbf{#1}}}
\newcommand\acclrvec[1]{\accentset{\,\leftrightarrow}{#1}}	
\newcommand{\blocktensor}[1]{\acclrvec{{\mathbf #1}}}	
\newcommand\blocktensorG[1]{\acclrvec{\boldsymbol #1}}	
\newcommand{\Nabla} {\vec{\nabla}}
\newcommand{\numfluxb}[1]{\hat{\mathbf{#1}} }
\newcommand{\numflux}[1]{\hat{{#1}} }
\newcommand\threeMatrix[1]{\underline{ #1}}				
\newcommand{\ec}{{\mathrm{EC}}}			
\newcommand{\es}{{\mathrm{ES}}}	
\newcommand{\tvdes}{{\mathrm{TVD-ES}}}			
\def\d{\mathrm{d}}
\newcommand{\minmod}{\mathrm{MINMOD}}
\newcommand{\sign}{\mathrm{sign}}
\newcommand{\bigpartialderiv}[2]{ \frac{\partial {#1}}{\partial {#2} } }
\newcommand\stateG[1]{\boldsymbol #1}			
\newcommand{\noncon}{\stateG{\Upsilon}}	
\newcommand{\entVar}{{\mathbf{v}}}
\newcommand{\scalEntVar}{{\mathbf{w}}}
\newcommand\state[1]{\mathbf{#1}}
\newcommand{\contSt}[1]{\tilde{\state{#1}}}
\newcommand{\supEuler}{{\mathrm{Euler}}}
\newcommand{\supMHD}{{\mathrm{MHD}}}
\newcommand{\supGLM}{{\mathrm{GLM}}}
\newcommand{\DG}{{\mathrm{DG}}}
\newcommand{\FV}{{\mathrm{FV}}}
\newcommand{\Jan}{\stateG{\Phi}}
\newcommand{\JanVec}{\blocktensorG{\Phi}}
\newcommand{\phiMHD}{\stateG{\phi}^\supMHD}
\newcommand{\phiGLM}{\blocktensorG{\phi}^\supGLM}
\newcommand{\phiGLMs}{\stateG{\phi}^\supGLM}
\newcommand{\vn}{\vec{\tilde{n}}}
\newcommand{\numnonconsD}[1]{ #1^{\Diamond} }
\newcommand{\elemDom}{K}	
\newcommand{\viscosity}{\mu_{\mathrm{NS}}}
\newcommand{\resistivity}{\mu_{\mathrm{R}}}						
\newcommand{\avg}[1]{\left\{\hspace*{-3pt}\left\{#1\right\}\hspace*{-3pt}\right\}}
\newcommand{\jump}[1]{\ensuremath{\left\llbracket #1 \right\rrbracket}}
\newcommand{\jumpS}[1]{\ensuremath{\left[\hspace*{-1pt}\left\llbracket #1 \right\rrbracket\hspace*{-1pt}\right]}}
\newcommand{\jumpR}[1]{\langle\hspace*{-2pt}\langle#1\rangle\hspace*{-2pt}\rangle}
\newtheorem{lemma}{Lemma}
\newtheorem{definition}{Definition}
\begin{document}

\let\WriteBookmarks\relax
\def\floatpagepagefraction{1}
\def\textpagefraction{.001}
\shorttitle{Entropy Stable DGSEM for MHD. Part II: Shock Capturing}
\shortauthors{Rueda-Ramírez et~al.}


\title [mode = title]{An Entropy Stable Nodal Discontinuous Galerkin Method for the resistive MHD Equations. Part II: Subcell Finite Volume Shock Capturing}

\author[1]{Andrés M. Rueda-Ramírez}[
                        orcid=0000-0001-6557-9162]
\cormark[1]
\ead{aruedara@uni-koeln.de}

\credit{Conceptualization, Methodology, Software, Validation, Formal analysis, Data Curation, Writing - Original Draft, Visualization}

\author[2]{Sebastian Hennemann}
\credit{Conceptualization, Methodology, Formal analysis, Writing - Original Draft}

\author[3]{Florian J. Hindenlang}
[orcid=0000-0002-0439-249X]
\credit{Methodology, Formal analysis, Writing - Original Draft}

\author[4]{Andrew R. Winters}
[orcid=0000-0002-5902-1522]
\credit{Methodology, Formal analysis, Writing - Original Draft}


\author[1]{Gregor J. Gassner}
[orcid=0000-0002-1752-1158]
\credit{Conceptualization, Methodology, Validation, Formal analysis, Data Curation, Writing - Original Draft}

\address[1]{Department of Mathematics and Computer Science, University of Cologne, Weyertal 86-90, 50931 Cologne, Germany}

\address[2]{German Aerospace Center (DLR), Linder Höhe, 51147 Cologne, Germany}

\address[3]{Max Planck Institute for Plasma Physics, Boltzmannstraße 2, 85748 Garching, Germany}

\address[4]{Department of Mathematics, Computational Mathematics, Linköping University, 581 83, Linköping, Sweden}

\cortext[cor1]{Corresponding author}

\begin{abstract}
The second paper of this series presents two robust entropy stable shock-capturing methods for discontinuous Galerkin spectral element (DGSEM) discretizations of the compressible magneto-hydrodynamics (MHD) equations.
Specifically, we use the resistive GLM-MHD equations, which include a divergence cleaning mechanism that is based on a generalized Lagrange multiplier (GLM).
For the continuous entropy analysis to hold, and due to the divergence-free constraint on the magnetic field, the GLM-MHD system requires the use of non-conservative terms, which need special treatment.

Hennemann et al. [\textit{"A provably entropy stable subcell shock capturing approach for high order split form DG for the compressible Euler equations"}. JCP, 2020] recently presented an entropy stable shock-capturing strategy for DGSEM discretizations of the Euler equations that blends the DGSEM scheme with a subcell first-order finite volume (FV) method.
Our first contribution is the extension of the method of Hennemann et al. to systems with non-conservative terms, such as the GLM-MHD equations.
In our approach, the advective and non-conservative terms of the equations are discretized with a hybrid FV/DGSEM scheme, whereas the visco-resistive terms are discretized only with the high-order DGSEM method.
We prove that the extended method is entropy stable on three-dimensional unstructured curvilinear meshes.
Our second contribution is the derivation and analysis of a second entropy stable shock-capturing method that provides enhanced resolution by using a subcell reconstruction procedure that is carefully built to ensure entropy stability.

We provide a numerical verification of the properties of the hybrid FV/DGSEM schemes on curvilinear meshes and show their robustness and accuracy with common benchmark cases, such as the Orszag-Tang vortex and the GEM (Geospace Environmental Modeling) reconnection challenge.
Finally, we simulate a space physics application: the interaction of Jupiter's magnetic field with the plasma torus generated by the moon Io.
\end{abstract}


\begin{highlights}
\item The entropy stable FV subcell shock-capturing method for the DGSEM is extended to compressible magnetohydrodynamics
\item An enhanced entropy stable higher-resolution FV subcell shock-capturing method is presented
\item The shock-capturing methods are validated and used for space physics applications
\end{highlights}

\begin{keywords}
Compressible Magnetohydrodynamics, 
Shock Capturing,
Entropy Stability,
Discontinuous Galerkin Spectral Element Methods
\end{keywords}

\maketitle


\section{Introduction}

The resistive magnetohydrodynamics (MHD) equations are of interest for instance in plasma physics, space physics, and geophysics, as they find applications in all those areas.
They describe the evolution of the mass, momentum, energy, and magnetic field of electrically conducting compressible fluids with a mixed hyperbolic-parabolic character that depends on the viscous and resistive properties of the medium.
The MHD equations have two important physical constraints that are not explicitly built into the partial differential equation (PDE).
The first one is the divergence-free condition on the magnetic field, $\Nabla \cdot \vec{B} = 0$, which rules out the existence of magnetic monopoles.
The second physical constraint is the second law of thermodynamics, which states that the thermodynamic entropy of a closed system can only increase or remain constant in time.
The entropy of an MHD system can only remain constant in the absence of diffusive effects if the solution is continuous.
In the presence of discontinuities, such as shocks, or viscous/resistive effects, the thermodynamic entropy increases over time.

Discontinuous Galerkin (DG) methods are a family of numerical schemes that offer an interesting and straight-forward way to construct discretizations with arbitrarily high-order accuracy by projecting the solution into high-order polynomial spaces.
High-order DG methods have made their way into the mainstream of Computational Fluid Dynamics because they are very robust when dealing with advection-dominated problems.
Furthermore, DG methods provide a compact stencil and hence a local character, a feature that makes them highly parallelizable and flexible for complex 3D unstructured grids \cite{Wang2013High,Cockburn2000,Hindenlang2012}.
Moreover, high-order DG methods offer flexibility to perform $h/p$ adaptation  \cite{Riviere2008,Kopriva2002,RuedaRamirez2019a,
RuedaRamirez2019}.

There exist two main stability issues in high-order discontinuous Galerkin methods.
The first one is the appearance of aliasing-driven instabilities due to insufficient integration, which may cause the simulations to crash, especially in very under-resolved flow fields (e.g. at high Reynolds numbers).
The second one is related to the emergence of spurious oscillations due to the Gibbs phenomenon when very steep gradients, or even discontinuities, are approximated with high-order polynomials.

In the first paper of this series, \citet{Bohm2018} presented a DGSEM discretization on Gauss-Lobatto points of the resistive MHD equations that takes care of the aliasing-driven instabilities with the use of a flux differencing representation of the fluxes and non-conservative terms, which has a dealiasing effect that stabilizes the numerical solution \cite{Winters2018}.
The flux differencing representation of the fluxes (also called split form since in some cases it corresponds to a split formulation of the advective PDE fluxes)  is possible since the DGSEM on Gauss-Lobatto nodes fulfills the summation-by-parts property
\cite{Fisher2013,Fisher2013a,Gassner2013,Carpenter2014}.
With a careful selection of the numerical fluxes, the split-form DGSEM scheme becomes provably entropy stable, i.e. consistent with the second law of thermodynamics, which provides additional nonlinear stability.
\citet{Bohm2018} complete their scheme using a divergence cleaning mechanism proposed by \citet{Derigs2018} that is based on a generalized Lagrange multiplier.

Even though entropy stability provides the high-order DGSEM discretization with enhanced robustness, it is insufficient to obtain strict nonlinear stability as the entropy analysis assumes positive density and pressure.
It has been observed that in the presence of strong discontinuities (e.g. shocks) the oscillations of the high-order polynomials can break the positivity condition and cause the scheme to crash.
Among other strategies, the oscillations can be controlled by adding artificial viscosity/resistivity to the scheme, or by using a subcell discretization that is more robust than the DG scheme.

Artificial dissipation smears out the shock fronts, and hence reduces the slope of the numerical solution, such that high-order DG polynomials can represent it.
Artificial dissipation is commonly applied in an element-local manner with the use of troubled cell indicators \cite{Persson2006,Klockner2011}.
An interesting approach developed by \citet{Fernandez2018} uses physics-based sensors to apply the artificial dissipation more locally for Navier-Stokes simulations.
This approach was extended to the compressible MHD equations by \citet{Ciuca2020}.
In fully hyperbolic problems, the addition of artificial diffusion operators can reduce the computational performance and requires the introduction of new nonphysical boundary conditions.
Besides, in the presence of very strong shocks, the artificial dissipation needed to stabilize the numerical solution might be high enough to significantly reduce the time-step size of time-explicit simulations.

To avoid the drawbacks of artificial dissipation techniques, Sonntag and Munz \cite{Sonntag2014,Sonntag2017,Sonntag2017a} proposed an interesting subcell FV shock-capturing method for DG discretizations of the Navier-Stokes equations.
The scheme relies on a troubled cell indicator and uses a hard switch to replace problematic DG elements with more robust FV subcells.
This approach was later extended to MHD by
\citet{Nunez-delaRosa2018}.
More general approaches have been suggested by \citet{Markert2020}, who proposed a continuous blending between DG schemes of different orders and a subcell FV method; and by 
\citet{Vilar2019}, who presented a so-called \textit{a posteriori} limitation procedure for DG that uses an underlying subcell FV scheme to control the monotonicity and positivity of the numerical solution.
More recently, subcell FV shock-capturing methods that satisfy entropy inequalities have been presented \cite{Hennemann2020,Pazner2020,Liu2018}.
An interesting approach, proposed by \citet{Hennemann2020} for the compressible Euler equations, retains entropy stability when blending a split-form DGSEM discretization with a co-located FV discretization in an element-local manner. 


In this paper, we take the entropy stable DGSEM discretization of the resistive GLM-MHD equations by \citet{Bohm2018} and construct two different entropy stable subcell schemes to make it robust to handle shocks.
The first contribution of this paper is the extension of the subcell approach of \citet{Hennemann2020} to the GLM-MHD equations, where we blend the high-order DGSEM discretization of the advective and non-conservative terms with a first-order subcell FV method that we modify from \cite{Derigs2018}, and use the high-order DGSEM scheme for the diffusive terms.
Our second contribution is the derivation of a subcell reconstruction procedure that enhances the resolution of the subcell FV shock-capturing method while retaining entropy stability.
We verify our methods, test them against typical benchmark cases, and compute a problem of space physics: the interaction of Jupiter's magnetosphere with the plasma torus generated by one of its moons, Io.

Although in this paper we derive discretization methods for the compressible GLM-MHD equations, the theory presented here is applicable to any hyperbolic-parabolic symmetrizable system with non-conservative terms.

The paper is organized as follows. 
In Section \ref{sec:NotationPhysics}, we briefly describe the notation and introduce the GLM-MHD system.
In section \ref{sec:HennemannGoesMHD} we extend the entropy stable subcell FV shock-capturing method of \citet{Hennemann2020} to systems of PDEs with non-conservative terms, such as the GLM-MHD equations. 
Next, in section \ref{sec:TVD-ES}, we derive the entropy stable shock-capturing scheme with enhanced resolution that is obtained with a subcell reconstruction procedure.
In Section \ref{sec:Indicator}, we describe the shock indicator that is used to determine where and in which amount the FV stabilization is added.
Finally, the numerical verification and validation of the methods is presented in Section \ref{sec:Results}.

\section{Notation and Governing Equations} \label{sec:NotationPhysics}

\subsection{Notation} \label{sec:notation}
We adopt the notation of \cite{Bohm2018,Gassner2018,RuedaRamirez2020} to work with vectors of different nature. 
Spatial vectors are noted with an arrow (e.g. $\vec{x}=(x,y,z) \in \mathbb{R}^3$), state vectors are noted in bold (e.g. $\state{u}=(\rho, \rho \vec{v}, \rho E, \vec{B}, \psi)^T$), and block vectors, which contain a state vector in every spatial direction, are noted as
\begin{equation}
\blocktensor{f} =
\begin{bmatrix}
\mathbf{f}_1 \\ 
\mathbf{f}_2 \\
\mathbf{f}_3
\end{bmatrix} =
\mathbf{f}_1 \hat{\imath} + \mathbf{f}_2 \, \hat{\jmath} + \mathbf{f}_3 \hat{k}.
\end{equation}

The gradient of a state vector is a block vector,
\begin{equation}
\Nabla \mathbf{q} =
\begin{bmatrix}
\partial_x \mathbf{q} \\
\partial_y \mathbf{q} \\
\partial_z \mathbf{q} 
\end{bmatrix}
=
\partial_x \mathbf{q} \, \hat{\imath} + 
\partial_y \mathbf{q} \, \hat{\jmath} +
\partial_z \mathbf{q} \hat{k},
\end{equation}
and the gradient of a spatial vector is defined as the transpose of the outer product with the del operator,
\begin{equation}
\Nabla \vec{v} := 
\left( \Nabla \otimes \vec{v} \right)^T = 
\left( \Nabla \vec{v}^T \right)^T = 
\begin{bmatrix}
\bigpartialderiv{v_1}{x} & \bigpartialderiv{v_1}{y} & \bigpartialderiv{v_1}{z} \\
\bigpartialderiv{v_2}{x} & \bigpartialderiv{v_2}{y} & \bigpartialderiv{v_2}{z} \\
\bigpartialderiv{v_3}{x} & \bigpartialderiv{v_3}{y} & \bigpartialderiv{v_3}{z}
\end{bmatrix}.
\end{equation}

We define the notation for the jump operator, arithmetic and logarithmic means between a left and right state, $a_L$ and $a_R$, as
\begin{equation}
\jump{a}_{(L,R)} := a_R-a_L, 
~~~~~~~~~ 
\avg{a}_{(L,R)} := \frac{1}{2}(a_L+a_R), 
~~~~~~~~ 
a^{\ln}_{(L,R)} := \jump{a}_{(L,R)}/\jump{\ln(a)}_{(L,R)}.
\label{means}
\end{equation}
A numerically stable procedure to evaluate the logarithmic mean is given in \cite{Ismail2009}.
Note that the jump operator defined here is not symmetric. For convenience, we define a symmetric jump operator that assumes ordered sub-indexes ($L$,$R$) as
\begin{equation} \label{eq:SymmetricJump}
\jumpS{a}_{(L,R)} := 
\begin{cases}
\jump{a}_{(L,R)} & \text{if } R \ge L, \\
\jump{a}_{(R,L)} & \text{otherwise.} 
\end{cases}
\end{equation}

\subsection{The Resistive GLM-MHD Equations} \label{sec:GLM-MHD}

\subsubsection{The System of Equations}

In this work, we use the variant of the resistive GLM-MHD equations that is consistent with the continuous entropy analysis of \citet{Derigs2018}.
The system of equations that governs the motion of compressible, visco-resistive plasmas reads 
\begin{equation} \label{eq:GLM-MHD}
\partial_t \mathbf{u} 
+ \Nabla \cdot \blocktensor{f}^a (\mathbf{u}) 
- \Nabla \cdot \blocktensor{f}^{\nu} (\mathbf{u}, \Nabla \mathbf{u})
+ \noncon(\mathbf{u}, \Nabla \mathbf{u})
= \state{0},
\end{equation}
with the state vector $\state{u} = (\rho, \rho \vec{v}, \rho E, \vec{B}, \psi)^T$, the advective flux $\blocktensor{f}^a$, the viscous flux $\blocktensor{f}^{\nu}$, the non-conservative term $\noncon$.
Here, $\rho$ is the density, $\vec{v} = (v_1, v_2, v_3)^T$ is the velocity, $E$ is the specific total energy, $\vec{B} = (B_1, B_2, B_3)^T$ is the magnetic field, and $\psi$ is the so-called \textit{divergence-correcting field}, a generalized Lagrange multiplier (GLM) that is added to the original MHD system to minimize the magnetic field divergence. 
These equations do not enforce the divergence-free condition exactly, $\Nabla \cdot \vec{B} = 0$, but they evolve towards a divergence-free state. 
For details see \cite{Munz2000,Dedner2002,Derigs2018}.

The advective flux contains the Euler, ideal MHD and GLM contributions,
\begin{equation}
\blocktensor{f}^a(\mathbf{u}) = \blocktensor{f}^{a,\supEuler} +\blocktensor{f}^{a,\supMHD}+\blocktensor{f}^{a,\supGLM}=
\begin{pmatrix} 
\rho \vec{v} \\[0.15cm]
\rho (\vec{v}\, \vec{v}^{\,T}) + p\threeMatrix{I} \\[0.15cm]
\vec{v}\left(\frac{1}{2}\rho \left\|\vec{v}\right\|^2 + \frac{\gamma p}{\gamma -1}\right)  \\[0.15cm]
\threeMatrix{0}\\ \vec{0}\\[0.15cm]
\end{pmatrix} +
\begin{pmatrix} 
\vec{0} \\[0.15cm]
\frac{1}{2 \mu_0} \|\vec{B}\|^2 \threeMatrix{I} - \frac{1}{\mu_0} \vec{B} \vec{B}^T \\[0.15cm]
\frac{1}{\mu_0} \left( \vec{v}\,\|\vec{B}\|^2 - \vec{B}\left(\vec{v}\cdot\vec{B}\right) \right) \\[0.15cm]
\vec{v}\,\vec{B}^T - \vec{B}\,\vec{v}^{\,T} \\ \vec{0}\\[0.15cm]
\end{pmatrix} +
\begin{pmatrix} 
\vec{0} \\[0.15cm]
\threeMatrix{0} \\[0.15cm]
\frac{c_h}{\mu_0} \psi \vec{B} \\[0.15cm]
c_h \psi \threeMatrix{I} \\ c_h \vec{B}\\[0.15cm]
\end{pmatrix},
\label{eq:advective_fluxes}
\end{equation}
where $p$ is the gas pressure, $\threeMatrix{I}$ is the $3\times 3$ identity matrix, $\mu_0$ is the permeability of the medium, and $c_h$ is the \textit{hyperbolic divergence cleaning speed}.
The visco-resistive flux is defined as
\begin{equation}
\blocktensor{f}^{\nu}(\mathbf{u},\Nabla \mathbf{u})  = 
\begin{pmatrix} 
\vec{0} \\[0.15cm] 
\threeMatrix{\tau} \\[0.15cm]
\threeMatrix{\tau}\vec{v} -\Nabla q - 
\frac{\resistivity}{\mu_0^2} \left( (\Nabla \times \vec{B} ) \times \vec{B}\right) \\[0.15cm]
\frac{\resistivity}{\mu_0} \left( (\Nabla \vec{B} )^T - \Nabla \vec{B}\right)\\[0.15cm]
\vec{0} \\[0.15cm]
\end{pmatrix}\,,
\label{eq:viscous_fluxes}
\end{equation}
where the viscous stress tensor reads
\begin{equation}
\threeMatrix{\tau} = \viscosity ((\Nabla \vec{v}\,)^T + \Nabla \vec{v}\,) - \frac{2}{3} \viscosity(\Nabla \cdot \vec{v}\,) \threeMatrix{I},
\label{stress}
\end{equation}
and the heat flux is defined as
\begin{equation}
\vec{\nabla} q = -\kappa \vec{\nabla} \left(\frac{p}{R \rho}\right).
\label{heat}
\end{equation}
The new constants, $\viscosity,\resistivity,\kappa,R \ge 0$, are the viscosity, resistivity of the plasma, thermal conductivity, and the universal gas constant, respectively.
In the case of vanishing viscosity, resistivity and conductivity, $\viscosity = \resistivity = \kappa = 0$, we recover the ideal GLM-MHD equations with $\blocktensor{f}^{\nu}=\mathbf{0}$.

We close the system with the (GLM) calorically perfect gas assumption,
\begin{equation}
p = (\gamma-1)\left(\rho  E - \frac{1}{2}\rho\left\|\vec{v}\right\|^2 - \frac{1}{2 \mu_0}\|\vec{B}\|^2 - \frac{1}{2 \mu_0}\psi^2\right),
\label{eqofstate}
\end{equation}
where $\gamma$ denotes the heat capacity ratio, and we compute the thermal conductivity supposing that the plasma has a constant Prandtl number (Pr),
\begin{equation}
\kappa = \frac{\gamma \viscosity R} {(\gamma - 1) \text{Pr}}.
\end{equation}  

The non-conservative term has two main components, $\noncon = \noncon^\supMHD + \noncon^\supGLM$, with
\begin{align}
\noncon^\supMHD &= (\Nabla \cdot \vec{B}) \phiMHD =  \left(\Nabla \cdot \vec{B}\right) 
\left( 0 \,,\, \mu_0^{-1} \vec{B} \,,\, \mu_0^{-1} \vec{v}\cdot\vec{B} \,,\,  \vec{v} \,    ,\, 0 \right)^T \,, \label{Powell}\\ 
\noncon^\supGLM &= \phiGLM \cdot \Nabla \psi \quad =   \phiGLMs_1 \,\frac{\partial \psi}{\partial x} + \phiGLMs_2 \frac{\partial \psi}{\partial y} + \phiGLMs_3 \frac{\partial \psi}{\partial z} \,,\label{NC_GLM}
\end{align}
where $\phiGLM$ is a block vector with
\begin{equation}\label{Galilean}
\phiGLMs_\ell = \mu_0^{-1} \left(0 \,,\, 0\,,\,0\,,\,0\,,\,  v_\ell \psi \,,\, 0\,,\,0\,,\,0\,,\, v_\ell \right)^T, \quad \ell = 1,2,3.
\end{equation} 
The first non-conservative term, $\noncon^\supMHD$, is the well-known Powell term \cite{Powell2001}, and the second non-conservative term, $\noncon^\supGLM$, results from Galilean invariance of the full GLM-MHD system \cite{Derigs2017}. 

 
We note that for a magnetic field with vanishing divergence, $\Nabla \cdot \vec{B} = 0$, i.e., in the continuous case, \eqref{eq:GLM-MHD} reduces to the visco-resistive MHD equations, which describe the conservation of mass, momentum, energy, and magnetic flux.

\subsubsection{Thermodynamic Properties of the System}

Making the physical assumption of positive density and pressure, $\rho,p>0$, we obtain a suitable, strictly convex entropy function for the ideal and the resistive GLM-MHD equations by dividing the thermodynamic entropy density by the constant $-(\gamma-1)$,
\begin{equation}
S(\state{u}) = - \frac{\rho s}{\gamma-1},
\label{entropy}
\end{equation}
where $S$ is our mathematical entropy and $s = \ln\left(p \rho^{-\gamma}\right)$ is the thermodynamic entropy.
From the entropy function, we define the entropy variables,
\begin{equation}
\entVar = \frac{\partial S}{\partial \state{u}} = \left(\frac{\gamma-s}{\gamma-1} - \beta \norm{\vec{v}}^2,~2\beta v_1,~2\beta v_2,~2\beta v_3,~-2\beta,~2\beta B_1,~2\beta B_2,~2\beta B_3,~2\beta \psi\right)^T,
\label{entrvars}
\end{equation}
with $\beta = \frac{\rho}{2p}$, a quantity that is proportional to the inverse temperature.

To analyze the thermodynamic properties of the MHD equations, let us first consider the homogeneous ideal GLM-MHD system, i.e., without the visco-resistive terms, 
%
\begin{equation} \label{eq:idealGLM-MHD}
\partial_t \mathbf{u} 
+ \Nabla \cdot \blocktensor{f}^a (\mathbf{u}) 
+ \noncon
= \state{0}.
\end{equation}
As was shown by \citet{Derigs2017}, if we contract \eqref{eq:idealGLM-MHD} with the entropy variables, we obtain the entropy conservation law if the solution is smooth,
\begin{equation}\label{eq:EntropyConservationLaw}
 \frac{\partial S}{\partial t} + \Nabla \cdot \vec{f}^{\,S} = 0,
\end{equation}
where $\vec{f}^{\,S} = \vec{v} S$ is the so-called entropy flux.

Furthermore, in the presence of discontinuities in the solution, and/or visco-resistive effects, the contraction of the resistive GLM-MHD equations with the entropy variables leads to an entropy inequality in the weak sense \cite{Bohm2018},
\begin{equation}\label{eq:EntropyInequalityWeak}
\int_{\Omega} \frac{\partial S}{\partial t} \d t 
+ \oint_{\partial \Omega} \left( \vec{f}^{S}
- \entVar^T \blocktensor{f}^{\nu}  \right) \cdot \vec{n} \d \sigma  \le 0,
\end{equation}
where the total mathematical entropy within any physical domain, $\Omega$, can only increase over time if it is transported into the domain through its boundaries,	$\partial \Omega$.
Equation \eqref{eq:EntropyInequalityWeak} is the mathematical description of the second law of thermodynamics.
We refer the reader to \cite{Bohm2018} for details about the derivation of \eqref{eq:EntropyInequalityWeak}.

Finally, we define the entropy flux potential to be
\begin{equation}\label{eq:entPotential}
\vec{\Psi} := \entVar^T \blocktensor{f}^a -\vec{f}^S +\theta \vec{B},
\end{equation}
where $\theta$ is the contraction of $\phiMHD$ from the Powell term \eqref{Powell} into entropy space,
\begin{equation}\label{eq:contractSource}
\theta = \entVar^T \phiMHD = 2\beta(\vec{v}\cdot\vec{B}). 
\end{equation}

\subsubsection{One-Dimensional MHD System}

To simplify the analysis of the GLM-MHD system and its numerical discretizations, we write a one-dimensional version of \eqref{eq:GLM-MHD},
\begin{equation} \label{eq:GLM-MHD_1D}
\bigpartialderiv {\mathbf{u}}{t}
+ \bigpartialderiv {\state{f}^a} {x}
- \bigpartialderiv {\state{f}^{\nu}} {x} 
+ \noncon_1
= \state{0},
\end{equation}
where the state variable is $\state{u} = (\rho, \rho \vec{v}, \rho E, \vec{B}, \psi)^T$, as before, the advective flux in $x$ is
\begin{equation}
\state{f}^a(\mathbf{u}) :=
\state{f}_1^{a,\supEuler} +\state{f}_1^{a,\supMHD}+\state{f}_1^{a,\supGLM}=
\begin{pmatrix} 
\rho v_1 \\[0.15cm]
\rho v_1^2 + p \\[0.15cm]
\rho v_1 v_2 \\[0.15cm]
\rho v_1 v_3 \\[0.15cm]
v_1 \left( \frac{1}{2} \rho \|\vec{v}\|^2 + \frac{\gamma p}{\gamma -1}\right) \\[0.15cm]
0 \\[0.15cm]
0 \\[0.15cm]
0 \\[0.15cm]
0 \\[0.15cm]
\end{pmatrix} +
\begin{pmatrix} 
0 \\[0.15cm]
\frac{1}{\mu_0} \left( \frac{1}{2} \|\vec{B}\|^2 - B_1 B_1 \right) \\[0.15cm]
- B_1 B_2 / \mu_0 \\[0.15cm]
- B_1 B_3 / \mu_0 \\[0.15cm]
\frac{1}{\mu_0} \left( v_1 \|\vec{B}\|^2 - B_1 \left(\vec{v}\cdot\vec{B}\right) \right) \\[0.15cm]
0 \\[0.15cm]
v_1 B_2 - v_2 B_1 \\[0.15cm]
v_1 B_3 - v_3 B_1 \\[0.15cm]
0 \\[0.15cm]
\end{pmatrix} +
\begin{pmatrix} 
0 \\[0.15cm]
0 \\[0.15cm]
0 \\[0.15cm]
0 \\[0.15cm]
\frac{c_h}{\mu_0} \psi B_1 \\[0.15cm]
c_h \psi  \\[0.15cm]
0 \\[0.15cm]
0 \\[0.15cm]
c_h B_1 \\[0.15cm]
\end{pmatrix},
\label{eq:advective_fluxes_1D}
\end{equation}
and the diffusive flux in $x$ reads
\begin{equation}
\state{f}^{\nu}\left(\mathbf{u},\bigpartialderiv {\state{u}}{x} \right)  = 
\begin{pmatrix} 
0 \\[0.15cm] 
\frac{4}{3} \viscosity \bigpartialderiv{v_1}{x}
\\[0.15cm]
\viscosity \bigpartialderiv{v_2}{x} 
\\[0.15cm]
\viscosity \bigpartialderiv{v_3}{x} 
\\[0.15cm] 
\viscosity \left(
\frac{4}{3} \bigpartialderiv{v_1}{x} v_1 + \bigpartialderiv{v_2}{x} v_2 + 
\bigpartialderiv{v_3}{x}  v_3
\right)
+ \bigpartialderiv{q}{x}
+ \frac{\resistivity}{\mu_0^2}
\left(
\bigpartialderiv{B_2}{x} B_2 + \bigpartialderiv{B_3}{x} B_3
\right)
 \\[0.15cm] 
0 \\[0.15cm] 
\frac{\resistivity}{\mu_0} \bigpartialderiv{B_2}{x} \\[0.15cm] 
\frac{\resistivity}{\mu_0} \bigpartialderiv{B_3}{x} \\[0.15cm] 
0 \\[0.15cm]
\end{pmatrix}\,.
\label{eq:viscous_fluxes_1D}
\end{equation}
Note that we removed the sub-index in the conservative fluxes to simplify the notation and improve the readability, $\state{f} \leftarrow \state{f}_1$, as this change does not produce ambiguity between the 1D and 3D notations.

The non-conservative term, $\noncon_1 = \noncon^{\supMHD}_1 + \noncon^{\supGLM}_1$, consists of the following two terms
\begin{equation}
\noncon^{\supMHD}_1 = \bigpartialderiv {B_1}{x} \phiMHD = \bigpartialderiv {B_1}{x}
\begin{pmatrix}
0 \\ \mu_0^{-1} B_1 \\ \mu_0^{-1} B_2 \\ \mu_0^{-1} B_3 \\ \mu_0^{-1} \vec{v} \cdot \vec{B} \\ v_1 \\ v_2 \\ v_3 \\  0
\end{pmatrix},
\ \ \ \ \
\noncon^{\supGLM}_1 = \bigpartialderiv {\psi}{x} \phiGLMs_1 = \bigpartialderiv {\psi}{x}
\begin{pmatrix}
0 \\ 0 \\ 0 \\ 0 \\ \mu_0^{-1} v_1 \psi \\ 0 \\ 0 \\ 0 \\ \mu_0^{-1} v_1
\end{pmatrix}.
\label{eq:NonCons_1D}
\end{equation}

Finally, the entropy flux potential in 1D is defined as
\begin{equation}\label{eq:entPotential_1D}
\Psi_1 := \entVar^T \state{f}^a - f^S +\theta B_1.
\end{equation}

For readability, in the remaining parts of this paper, we analyze and discretize the one-dimensional GLM-MHD equations, \eqref{eq:GLM-MHD_1D}. 
This can be done without loss of generality, as the spatial dimensions are decoupled in the GLM-MHD system \cite{Derigs2018}.
For completeness, however, we include relevant three-dimensional derivations in Appendix \ref{sec:3D_Derivations}.

\section{Entropy Stable FV/DGSEM Discretization} \label{sec:HennemannGoesMHD}

Following the approach developed by \citet{Hennemann2020}, we seek an entropy stable hybrid FV/DGSEM discretization of the GLM-MHD equations of the form
\begin{equation} \label{eq:BlendedScheme}
\Delta x_j \dot{\state{u}}_j = 
(1-\alpha) \state{F}_j^{a,\DG} 
+ \alpha \state{F}_j^{a,\FV} 
- \state{F}_j^{\nu,\DG},
~~~~~~
j=0, \ldots, N
\end{equation}
where $\Delta x_j$ is the FV subcell size in physical space at the degree of freedom $j$ of each element, $\dot{\state{u}}_j$ is the discrete time derivative  of the solution at the degree of freedom $j$, $\alpha \in [0,1]$ is an element-local blending coefficient, $\state{F}^{a,\FV}$ and $\state{F}^{a,\DG}$ are the discretizations of the advective and non-conservative terms with the FV method and the DGSEM, respectively, and $\state{F}^{\nu,\DG}$ is the discretization of the diffusive terms with the DGSEM.

Note that we propose a method that combines the advective and non-conservative terms of the low- and high-order methods, while we discretize the diffusive terms using only the high-order DGSEM.
This ansatz is valid from a numerical point of view, as the high gradients in the vicinity of a shock cause the diffusive terms to add an increased dissipation.
This additional dissipation contributes to the stabilization of the numerical solution.

The building blocks of \eqref{eq:BlendedScheme} are detailed in following sections.
First, we present the high-order DGSEM discretization of the visco-resistive GLM-MHD system in section \ref{sec:DGSEM}.
Next, we derive the first-order FV discretization of the ideal GLM-MHD system in Section \ref{sec:FV_GLM-MHD}.
Finally, in Section \ref{sec:EntropyStabilityFirstOrder} we show that our proposed hybrid FV/DGSEM is entropy stable.

All the derivations in this section are for the 1D GLM-MHD system for simplicity.
For completeness, however, we have included the derivations for the 3D GLM-MHD system on 3D curvilinear meshes in Appendix \ref{sec:3D_Derivations}.

\subsection{DGSEM Discretization of the Visco-Resistive GLM-MHD System} \label{sec:DGSEM}

\citet{Bohm2018} proposed an entropy stable DGSEM discretization of the resistive GLM-MHD equations.
To obtain it, we rewrite \eqref{eq:GLM-MHD_1D} as
\begin{equation} \label{eq:GLM-MHD_1D_homo}
\bigpartialderiv {\mathbf{u}}{t}
+ \bigpartialderiv {} {x} \state{f}^a (\state{u})
- \bigpartialderiv {} {x} 
\state{f}^{\nu} \left( \state{u}, \bigpartialderiv{\entVar}{x}  \right) 
+ \noncon_1 \left(\state{u},\Nabla \state{u} \right)
= 0,
\end{equation}
where the visco-resistive (diffusive) flux is rewritten to depend on the gradient of the entropy variables.
Following e.g. \citet{Bassi1997}, and \citet{Arnold2002}, \eqref{eq:GLM-MHD_1D_homo} can be rewritten as a first-order system,
\begin{align}  \label{eq:GLM-MHD_1D_homoSys1}
\begin{cases}
\bigpartialderiv {\mathbf{u}}{t}
&= 
- \bigpartialderiv {} {x} \state{f}^a (\state{u})
- \noncon_1 \left(\state{u},\Nabla \state{u} \right)
+ \bigpartialderiv {} {x}  \state{f}^{\nu} \left( \state{u}, \state{g}  \right),
\\
\bigpartialderiv{\entVar}{x} &= \state{g}. 
\end{cases}
\end{align}

To obtain the DGSEM-discretization of \eqref{eq:GLM-MHD_1D_homoSys1}, the simulation domain is tessellated into elements and all variables are approximated within each element by piece-wise Lagrange interpolating polynomials of degree $N$ on the Legendre-Gauss-Lobatto (LGL) nodes.
These polynomials are continuous in each element and may be discontinuous at the element interfaces.
Furthermore, \eqref{eq:GLM-MHD_1D_homoSys1} is multiplied by an arbitrary polynomial (test function) of degree $N$ and numerically integrated by parts inside each element of the mesh with an LGL quadrature rule of $N+1$ points on a reference element, $\xi \in [-1,1]$, to obtain
\begin{equation} \label{eq:DGSEM_MHD}
J \omega_j \dot{\state{u}}^{\DG}_j = \state{F}_j^{a,\DG} - \state{F}_j^{\nu,\DG},
\end{equation}
for each degree of freedom $j$ of each element. 
In \eqref{eq:DGSEM_MHD}, $\omega_j$ is the reference-space quadrature weight, $J$ is the geometry mapping Jacobian from reference space to physical space, which is constant within each element in the 1D discretization, $\state{F}_j^{a,\DG}$ is the discretization of the advective and non-conservative terms, and $\state{F}_j^{\nu,\DG}$ is the discretization of the diffusive term.

The advective and non-conservative terms are discretized using the split-form DGSEM.
The discretization for any element $\elemDom$ reads
%
\begin{align} \label{eq:DGSEMMHDadv2}
\state{F}_j^{a,\DG} =
& 
-2 \sum_{k=0}^N Q_{jk} \state{f}^{*}_{(j,k)} 
-  \sum_{k=0}^N Q_{jk} \Jan^{*}_{(j,k)} 
+ \delta_{jN} \left( \state{f}^a_N +\Jan_N \right)
- \delta_{j0} \left( \state{f}^a_0 + \Jan_0 \right)
& \bigg\rbrace &
{\state{F}_{\elemDom,j}^{a,\DG}}
\nonumber \\
&
- \delta_{jN} \left( \numfluxb{f}^a_{(N,R)} + \numnonconsD{\Jan}_{(N,R)} \right)
+ \delta_{j0} \left(\numfluxb{f}^a_{(0,L)} + \numnonconsD{\Jan}_{(0,L)} \right)
& \bigg\rbrace &
{\state{F}_{\partial \elemDom,j}^{a,\DG}},
\end{align}
where $Q_{jk}=\omega_j D_{jk}=\omega_j \ell'_k(\xi_j)$ is the SBP derivative matrix, defined in terms of the Lagrange interpolating polynomials, $\{ \ell_k \}_{k=0}^N$, $\state{f}^{*}_{(j,k)} = \state{f}^{*}(\state{u}_j,\state{u}_k)$ is the volume numerical two-point flux, $\numfluxb{f}^a_{(i,j)} = \numfluxb{f}^a(\state{u}_i,\state{u}_j)$ is the surface numerical flux, which accounts for the jumps of the solution across the cell interfaces, $\Jan^{*}_{(j,k)}$ is the volume numerical non-conservative term, and $\numnonconsD{\Jan}_{i,j}$ is the surface numerical non-conservative term.
$\state{F}_{\elemDom,j}^{a,\DG}$ gathers the terms that only depend on inner degrees of freedom, and $\state{F}_{\partial \elemDom,j}^{a,\DG}$ gathers the boundary terms that depend on outer and inner degrees of freedom.

We require the numerical fluxes to be conservative (i.e. symmetric),
\begin{equation} \label{eq:conservativeProp}
\numfluxb{f}^a (\state{u}_i,\state{u}_j) = \numfluxb{f}^a (\state{u}_j,\state{u}_i), 
~~~~
\state{f}^{*} (\state{u}_i,\state{u}_j) = \state{f}^{*} (\state{u}_j,\state{u}_i),
\end{equation}
and consistent,
\begin{equation} \label{eq:consistentProp}
\numfluxb{f}^a (\state{u}_j,\state{u}_j) = \state{f}^a ( \state{u}_j),
~~~~~~~
\state{f}^{*} (\state{u}_j,\state{u}_j) = \state{f}^a ( \state{u}_j).
\end{equation}

Note that the numerical non-conservative terms do not need to fulfill the symmetry property, \eqref{eq:conservativeProp}, as they are by definition non-conservative.
However, we require them to have the consistency property,
\begin{equation}
\numnonconsD{\Jan}_{(j,j)} = \Jan_j,
~~~~
{\Jan}^{*}_{(j,j)} = \Jan_j,
\end{equation}
where $\Jan := \phiMHD B_1 + \phiGLMs_1 \psi$.

The surface numerical non-conservative terms are defined as \cite{Derigs2018,Bohm2018}
\begin{align} \label{eq:DiamondFluxes}
\numnonconsD{\Jan}_{(j,j+1)} &= 
\numnonconsD{ \left( \phiMHD B_1 \right) }_{(j,j+1)} 
+
\numnonconsD{ \left( \phiGLMs_1 \psi \right) }_{(j,j+1)} 
\nonumber \\
&=
\avg{B_1}_{(j,j+1)} \phiMHD_j + \avg{\psi}_{(j,j+1)} \phiGLMs_{1,j}.
\end{align}
and the volume numerical non-conservative terms are defined as \cite{Bohm2018}
\begin{align} \label{eq:volNonCons_D1}
\Jan^{*}_{(j,k)} &= \noncon^{*\supMHD}_{(j,k)} + \noncon^{*\supGLM}_{(j,k)} \nonumber \\
&= \phiMHD_j B_{1,k} + \phiGLMs_{1,j} \psi_k.
\end{align}

Finally, the diffusive term is discretized using the standard DGSEM on Gauss-Lobatto nodes,
\begin{equation}
\state{F}_j^{\nu,\DG} = 
-\sum_{k=0}^N Q_{jk} \state{f}^{\nu}_{k} 
- \delta_{jN} \left( \numfluxb{f}^{\nu}_{(N,R)}-\state{f}^{\nu}_N \right)
+ \delta_{j0} \left( \numfluxb{f}^{\nu}_{(0,L)}-\state{f}^{\nu}_0 \right),
\end{equation}
where $\numfluxb{f}^{\nu}_{(L,R)}(\state{u}_L,\state{u}_R, \state{g}_L, \state{g}_R)$ is the diffusive numerical flux function, which fulfills the symmetry \eqref{eq:conservativeProp} and consistency \eqref{eq:consistentProp} properties.
Furthermore, the nodal values of the diffusive flux, $\state{f}^{\nu}_{k} = \state{f}^{\nu} (\state{u}_k,\state{g}_k)$, are evaluated with
\begin{equation} \label{eq:gradEqDGSEM}
J \omega_k \state{g}_k = 
\sum_{n=0}^N Q_{kn} \entVar_{n} 
+ \delta_{kN} \left( \numflux{\entVar}_{(N,R)}-\entVar_N \right)
- \delta_{k0} \left( \numflux{\entVar}_{(0,L)}-\entVar_0 \right),
\end{equation}
where $\numflux{\entVar}_{(L,R)}(\entVar_L,\entVar_R)$ is the numerical surface contribution of the entropy variables.
In this paper, we use method proposed by \citet{Bassi1997} (BR1) to compute $\numflux{\entVar}_{(L,R)}$ and $\numfluxb{f}^{\nu}_{(L,R)}$.
Note that the BR1 method preserves entropy stability for DGSEM discretizations of the Navier-Stokes \cite{Gassner2018} and the resistive GLM-MHD equations \cite{Bohm2018}, provided that the gradient equations use the entropy variables, as in \eqref{eq:gradEqDGSEM}.

\subsection{The Native LGL Finite Volume Discretization of the ideal GLM-MHD system} \label{sec:FV_GLM-MHD}

Following the strategy proposed by \citet{Hennemann2020}, we formulate a first-order Finite Volume method that can be seamlessly blended with the high-order DGSEM of Section \ref{sec:DGSEM}.
We call this method the \textit{Native LGL Finite Volume Approximation}, since the FV method uses a subcell grid that matches the LGL grid of the high-order DGSEM, where the subcell size is set as the quadrature weight times the mapping Jacobian, and the LGL nodal values are interpreted as subcell mean values. 
The subcell distribution within an element is illustrated in Figure \ref{fig:SubcellFV} for $N=5$.

\begin{figure}[htb]
\centering
\includegraphics[width=0.6\linewidth]{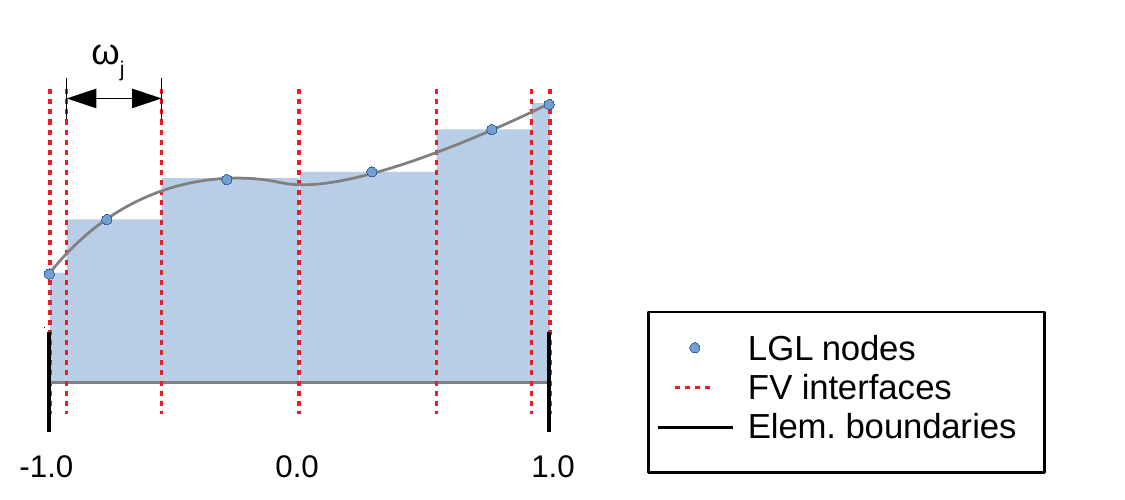}
\caption{Example of a DGSEM element with FV subcells ($N=5$). 
Note that we represent the element in reference space. 
In physical space, the subcell width is $J \omega_j$.}
\label{fig:SubcellFV}
\end{figure}

We use the entropy stable Finite Volume discretization of the ideal GLM-MHD system proposed by \citet{Derigs2018} in this non-uniform grid.
To do so, let us consider the ideal GLM-MHD system in one dimension,
\begin{equation} \label{eq:idalHomoGLM-MHD_1D}
\bigpartialderiv {\mathbf{u}}{t}
+ \bigpartialderiv {\state{f}^a} {x}
+ \noncon_1
= \state{0}.
\end{equation}

To obtain a first-order finite volume discretization of \eqref{eq:idalHomoGLM-MHD_1D}, we integrate over each subcell, and use integration by parts on the divergence term to obtain
%
%
\begin{equation} \label{eq:FVMHD}
J \omega_j \dot{\state{u}}^{\FV}_j
= \numfluxb{f}^a_{(j,j-1)} - \numfluxb{f}^a_{(j,j+1)}
- J \omega_j \noncon_{1,j} 
= \state{0},
\end{equation}
where the two-point numerical fluxes, $\numfluxb{f}^a_{(i,j)} = \numfluxb{f}^a(\state{u}_i,\state{u}_j)$ account for the jumps of the solution across the cell interfaces, and $\noncon_{1,j}$ is the discretization of the non-conservative terms in the cell $j$.

Following the strategy used by \citet{Chandrashekar2016}, and \citet{Derigs2018}, we discretize the non-conservative term, \eqref{eq:NonCons_1D}, using a central differencing scheme,
\begin{align} \label{eq:nonConsCentralDiff}
\noncon_{1,j} &=
\left( \noncon^{\supMHD}_1 + \noncon^{\supGLM}_1 \right)
\\
&\approx
\left( \frac{\avg{B_1}_{(j,j+1)} - \avg{B_1}_{(j-1,j)}} {J \omega_j}  \phiMHD_j +
\frac{\avg{\psi}_{(j,j+1)} - \avg{\psi}_{(j-1,j)}} {J \omega_j}  \phiGLMs_{1,j} \right).
\end{align}

Equation \eqref{eq:FVMHD} can be then rewritten as
\begin{equation} \label{eq:FVMHD_D}
J \omega_j \dot{\state{u}}^{\FV}_j  
= \numfluxb{f}^a_{(j,j-1)}
- \numfluxb{f}^a_{(j,j+1)}
+ \numnonconsD{\Jan}_{(j,j-1)}
- \numnonconsD{\Jan}_{(j,j+1)},
\end{equation}
where, in accordance with the central differencing discretization in \eqref{eq:nonConsCentralDiff}, the non-conservative terms are defined as in \eqref{eq:DiamondFluxes}.

The numerical fluxes and non-conservative terms on the element boundaries are evaluated with the left and right outer states,
\begin{align}
\numfluxb{f}^a_{(0,-1)} :=& \numfluxb{f}^a(\state{u}_0,\state{u}_L), &
\numnonconsD{\Jan}_{(0,-1)} :=& \numnonconsD{\Jan}(\state{u}_0,\state{u}_L),\\
\numfluxb{f}^a_{(N,N+1)} :=& \numfluxb{f}^a(\state{u}_N,\state{u}_R), &
\numnonconsD{\Jan}_{(N,N+1)} :=& \numnonconsD{\Jan}(\state{u}_N,\state{u}_R).
\end{align}
Therefore, it is possible to rewrite the native LGL FV discretization of the GLM-MHD system for any element $\elemDom$ as
\begin{align} \label{eq:FVMHD_BlendedForm}
\state{F}_j^{a,\FV} :=
J \omega_j \dot{\state{u}}^{\FV}_j  
=&  \delta_{j0} \left(\numfluxb{f}^a_{(0,L)} + \numnonconsD{\Jan}_{(0,L)} \right)
- \delta_{jN} \left(\numfluxb{f}^a_{(N,R)} + \numnonconsD{\Jan}_{(N,R)} \right)
& \bigg\rbrace &
{\state{F}_{\partial \elemDom,j}^{a,\FV}},
\nonumber\\
&
+ \left( 1-\delta_{j0} \right) 
\left( \numfluxb{f}^{a,\FV}_{(j,j-1)}
+ \numnonconsD{ \stateG{\Phi} }_{(j,j-1)}
\right)
- \left( 1-\delta_{jN} \right)
\left( \numfluxb{f}^{a,\FV}_{(j,j+1)}
+ \numnonconsD{ \stateG{\Phi} }_{(j,j+1)}
\right),
& \bigg\rbrace &
{\state{F}_{\elemDom,j}^{a,\FV}},
\end{align}
where we now allow using a different numerical flux function for the element boundaries, $\numfluxb{f}^{a}$, and for the subcell interfaces that lie within the element, $\numfluxb{f}^{a,\FV}$.


\subsection{Entropy Stability} \label{sec:EntropyStabilityFirstOrder}

In this section, we derive the entropy balance of the FV, DGSEM, and hybrid FV/DGSEM discretizations.
For simplicity, we start with the FV discretization.

\subsubsection{Entropy Balance of the Native LGL FV Discretization}

The numerical scheme is said to be entropy conservative \textit{semi-discretely} if it translates into a semi-discrete entropy conservation law when contracted with the entropy variables.
For instance, if we contract \eqref{eq:FVMHD_D} with the entropy variables on the left, 
\begin{equation} \label{eq:contractFVMHD_S}
J \omega_j \entVar_j^T \dot{\state{u}}_j  
= \entVar_j^T  \left(
  \numfluxb{f}^a_{(j,j-1)}
- \numfluxb{f}^a_{(j,j+1)}
+ \numnonconsD{ \stateG{\Phi} }_{(j,j-1)}
- \numnonconsD{ \stateG{\Phi} }_{(j,j+1)}
\right),
\end{equation}
we expect to obtain a semi-discrete entropy conservation law,
\begin{equation} \label{eq:FV_EClaw}
J \omega_j \dot{S}_j  
= \numflux{f}^S_{(j,j-1)}
- \numflux{f}^S_{(j,j+1)},
\end{equation}
where the numerical entropy flux, $\numflux{f}^S_{(i,j)}$, must fulfill the conservative \eqref{eq:conservativeProp} and consistency \eqref{eq:consistentProp} properties.

In the discretization of systems of conservation laws, semi-discrete entropy conservation can be enforced using Tadmor's condition for entropy conserving schemes \cite{tadmor1986minimum,tadmor1983entropy,Tadmor2003},
\begin{equation}
\jump{\entVar}_{(j,k)}^T \numfluxb{f}^a_{(j,k)} = \jump{\Psi}_{(j,k)}.
\end{equation}
However, since we are dealing with a system that has non-conservative terms, we need a generalized Tadmor's condition for entropy conserving schemes \cite{Bohm2018,Liu2018,Renac2019,Manzanero2020}, which can be written using the numerical non-conservative terms as
\begin{equation} \label{eq:TadmorNonCons}
\jump{\entVar}_{(j,k)}^T 
\numfluxb{f}^a_{(j,k)}
+ \entVar^T_{k} \numnonconsD{\stateG{\Phi}}_{(k,j)}
- \entVar^T_{j}   \numnonconsD{\stateG{\Phi}}_{(j,k)}
= \jump{\Psi}_{(j,k)}.
\end{equation}

We can fulfill Tadmor's generalized condition, \eqref{eq:TadmorNonCons}, with a correct combination of numerical non-conservative terms and numerical fluxes.
For instance, for this choice of the numerical non-conservative terms, \eqref{eq:DiamondFluxes}, \citet{Derigs2018} proposed an EC flux, which we detail in Appendix \ref{sec:ECflux}.
Using $\numfluxb{f}^a (\state{u}_L,\state{u}_R) = \state{f}_1^{\ec}(\state{u}_L,\state{u}_R)$, the first-order FV scheme is semi-discretely entropy conservative by construction and is virtually dissipation free.

Additional dissipation can be added to the scheme using the entropy conservative two-point flux and Lax-Friedrichs type dissipation,
\begin{equation} \label{eq:EC_LFdiss}
\numfluxb{f}^a (\state{u}_L,\state{u}_R) =  \state{f}_1^{\ec}(\state{u}_L,\state{u}_R) 
- \frac{1}{2} \mat{\mathcal{D}} \jumpS{\state{u}}_{(L,R)},
\end{equation}
where the use of the symmetric jump operator, \eqref{eq:SymmetricJump}, guarantees the fulfillment of the conservative property, \eqref{eq:conservativeProp}.
In this work, we use dissipation matrices of \textit{Roe-type},
\begin{equation}
\mat{\mathcal{D}} = \mat{R}~|\mat{\Lambda}| ~\mat{R}^{-1},
\end{equation}
where $\mat{R}$ is the matrix of right eigenvectors evaluated on a mean state, and $\mat{\Lambda}$ is a diagonal matrix with the eigenvalues of the flux.
Note that the Lax-Friedrichs and the Rusanov schemes can be written as a Roe-type operator, where $\mat{\Lambda}$ has the maximum eigenvalue in all its diagonal entries.

In accordance with the generalized Tadmor's condition for entropy conserving schemes, \eqref{eq:TadmorNonCons}, we define the numerical entropy flux and the entropy production.

\begin{definition}[Numerical entropy flux]
The numerical entropy flux from the degree of freedom $j$ to $k$ is defined as
\begin{equation} \label{eq:numEntFlux} 
\numflux{f}^S_{(j,k)} = 
\avg{\entVar}_{(j,k)}^T \numfluxb{f}^a_{(j,k)} 
+ \frac{1}{2} \entVar^T_j \numnonconsD{\stateG{\Phi}}_{(j,k)}
+ \frac{1}{2} \entVar^T_{k} \numnonconsD{\stateG{\Phi}}_{(k,j)}
- \avg{\Psi}_{(j,k)},
\end{equation}
which clearly fulfills the symmetric conservative property, \eqref{eq:conservativeProp}.
\end{definition}

\begin{definition}[Entropy production]
The entropy production on an interface between the degrees of freedom $j$ and $k$ is defined as
\begin{equation} \label{eq:EntropyDissipation}
r_{(j,k)} =
\jump{\entVar}_{(j,k)}^T 
\numfluxb{f}^a_{(j,k)}
+ \entVar^T_{k} \numnonconsD{\stateG{\Phi}}_{(k,j)}
- \entVar^T_{j}   \numnonconsD{\stateG{\Phi}}_{(j,k)}
- \jump{\Psi}_{(j,k)}.
\end{equation}
\end{definition}

Note that a scheme with zero entropy production fulfills \eqref{eq:TadmorNonCons}.
We are now ready to analyze the entropy behavior of the FV scheme that uses the selected numerical non-conservative terms and any numerical flux function.

\begin{lemma} \label{lemma:EntropyFV}
The semi-discrete entropy balance of the first-order native LGL FV discretization of the ideal and homogeneous GLM-MHD equations, \eqref{eq:FVMHD_D}, for each cell reads
\begin{equation} \label{eq:FV_ESinequality}
J \omega_j \dot{S}_j  
= \numflux{f}^S_{(j,j-1)}
- \numflux{f}^S_{(j,j+1)}
+ \frac{1}{2} \left( r_{(j-1,j)} + r_{(j,j+1)} \right).
\end{equation}
\end{lemma}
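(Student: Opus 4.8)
The plan is to contract the native LGL FV update \eqref{eq:FVMHD_D} from the left with the entropy variables $\entVar_j$ and then identify the resulting interface terms with the numerical entropy flux \eqref{eq:numEntFlux} and the entropy production \eqref{eq:EntropyDissipation}, up to entropy-flux-potential contributions that telescope away within the cell. Since $\entVar = \partial S / \partial \state{u}$, the chain rule gives $J\omega_j \entVar_j^T \dot{\state{u}}_j = J\omega_j \dot S_j$, so the identity to be proven reduces to the purely algebraic rewriting
\[
\entVar_j^T \bigl( \numfluxb{f}^a_{(j,j-1)} + \numnonconsD{\Jan}_{(j,j-1)} \bigr)
- \entVar_j^T \bigl( \numfluxb{f}^a_{(j,j+1)} + \numnonconsD{\Jan}_{(j,j+1)} \bigr)
= \numflux{f}^S_{(j,j-1)} - \numflux{f}^S_{(j,j+1)} + \frac{1}{2}\bigl( r_{(j-1,j)} + r_{(j,j+1)} \bigr).
\]

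For the face $(j,j+1)$ I would insert the definitions into the combination $-\numflux{f}^S_{(j,j+1)} + \frac{1}{2} r_{(j,j+1)}$ and simplify using the elementary identities $\avg{a}_{(L,R)} - \frac{1}{2}\jump{a}_{(L,R)} = a_L$ and $\avg{a}_{(L,R)} + \frac{1}{2}\jump{a}_{(L,R)} = a_R$: the symmetric-flux term collapses to $-\entVar_j^T \numfluxb{f}^a_{(j,j+1)}$, the $\numnonconsD{\Jan}_{(j+1,j)}$ cross-terms cancel, the two $\numnonconsD{\Jan}_{(j,j+1)}$ contributions combine to $-\entVar_j^T \numnonconsD{\Jan}_{(j,j+1)}$, and the potential terms reduce to $+\Psi_j$. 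This gives $-\numflux{f}^S_{(j,j+1)} + \frac{1}{2} r_{(j,j+1)} = -\entVar_j^T(\numfluxb{f}^a_{(j,j+1)} + \numnonconsD{\Jan}_{(j,j+1)}) + \Psi_j$. An entirely analogous computation at the face $(j-1,j)$, using $\numfluxb{f}^a_{(j,j-1)} = \numfluxb{f}^a_{(j-1,j)}$ and $\numflux{f}^S_{(j,j-1)} = \numflux{f}^S_{(j-1,j)}$ from the symmetry property \eqref{eq:conservativeProp}, yields $\numflux{f}^S_{(j,j-1)} + \frac{1}{2} r_{(j-1,j)} = \entVar_j^T(\numfluxb{f}^a_{(j,j-1)} + \numnonconsD{\Jan}_{(j,j-1)}) - \Psi_j$, with the opposite sign on $\Psi_j$.

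Adding the two identities, the $\Psi_j$ contributions cancel and the right-hand side is exactly the contracted update, which proves \eqref{eq:FV_ESinequality}. I expect the only delicate point to be the bookkeeping forced by the non-symmetry of the numerical non-conservative terms: one must consistently keep $\numnonconsD{\Jan}_{(j,k)}$ and $\numnonconsD{\Jan}_{(k,j)}$ apart and orient the conservative flux correctly, and then notice the crucial cancellation — the entropy-flux potential $\Psi_j$ enters with opposite signs at the two faces of cell $j$. It is worth stressing that the argument uses nothing beyond the symmetry and consistency of $\numfluxb{f}^a$ and the defining formulas for $\numflux{f}^S$ and $r$; in particular it does not require the numerical flux to be entropy conservative, so the balance \eqref{eq:FV_ESinequality} holds verbatim for the dissipative choice \eqref{eq:EC_LFdiss} as well.
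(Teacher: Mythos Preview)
Your proof is correct and follows essentially the same route as the paper's: contract \eqref{eq:FVMHD_D} with $\entVar_j$, then use the definitions \eqref{eq:numEntFlux} and \eqref{eq:EntropyDissipation} together with the identities $\avg{a}\pm\tfrac{1}{2}\jump{a}=a_{R/L}$ to identify the interface contributions, with the $\Psi_j$ terms telescoping away. The only organizational difference is that the paper adds and subtracts the neighbor terms to the contracted update and then recognizes $\numflux{f}^S$ and $r$, whereas you expand $\numflux{f}^S$ and $r$ face by face and sum; both are the same algebraic identity read in opposite directions.
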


\begin{proof}
The proof of \eqref{eq:FV_ESinequality} is given in \cite{Derigs2018}.
However, for completeness, we include the proof consistent with the current notation and formulations in Appendix \ref{sec:Proof_FV_1D}.
\end{proof}


The original Lax-Friedrichs type dissipation, \eqref{eq:EC_LFdiss}, is not entropy stable for the MHD equations \cite{Derigs2017}.
We can obtain entropy stability if we approximate the jump of the state quantities with the jump of the entropy variables, $\jump{\state{u}}_{(j,k)} \approx \mat{\mathcal{H}} \jump{\entVar}_{(j,k)}$, where $\mat{\mathcal{H}} = \partial \state{u}/ \partial \entVar$ on a mean state.
We rewrite the entropy stable flux from \eqref{eq:EC_LFdiss} as
\begin{equation} \label{eq:EC_LFdiss2}
\state{f}_1^{\es}(\state{u}_L,\state{u}_R) := \state{f}_1^{\ec}(\state{u}_L,\state{u}_R) 
- \frac{1}{2} 
\underbrace{\mat{R}~ |\mat{\Lambda}| ~\mat{R}^{-1}~ \mat{\mathcal{H}} }_{\mat{\mathcal{D}}^{\entVar}}
\jumpS{\entVar}_{(L,R)}.
\end{equation}

The entropy production of the flux \eqref{eq:EC_LFdiss2} between the degrees of freedom $j$ and $k$ ($j<k$) can be computed according to \eqref{eq:EntropyDissipation} as
\begin{equation} \label{eq:LF_Diss2}
r_{(j,k)} = - \frac{1}{2} \jump{\entVar}_{(j,k)}^T \mat{\mathcal{D}}^{\entVar} \jump{\entVar}_{(j,k)},
\end{equation}
which is always negative if $\mat{\mathcal{D}}^{\entVar}$ is a symmetric positive definite matrix.

\citet{barth1999numerical} showed that there exists a diagonal matrix, $\mat{\mathcal{Z}}>0$, that relates the eigenvector matrix, $\mat{R}$, to the entropy Jacobian, $\mat{\mathcal{H}}$, such that 
\begin{equation}
\mat{\mathcal{H}} = \mat{R} ~\mat{\mathcal{Z}}~ \mat{R}^T.
\end{equation}
As a result, the dissipation matrix yields
\begin{align}
\mat{\mathcal{D}}^{\entVar} =  \mat{R}~|\mat{\Lambda}| ~ \mat{\mathcal{Z}}~ \mat{R}^T,
\end{align}
which is clearly symmetric positive definite by construction.

The derivation of the matrices $\mat{R}$ and $\mat{\mathcal{Z}}$ is not trivial.
We refer the reader to \cite{Derigs2018} for the derivation of the matrices for the GLM-MHD equations, and to \cite{Winters2017} for the derivations for the MHD equations.
Note that the eigensystem of the compressible MHD equations exhibits degeneracies \cite{Roe1996}.

\subsubsection{Entropy Balance of the High-Order DGSEM}

\begin{lemma} \label{lemma:EntropyDGSEM}
The semi-discrete entropy balance of the DGSEM discretization of the GLM-MHD equations, \eqref{eq:DGSEM_MHD}, integrating over an entire element, reads
\begin{equation} \label{eq:EntropyDGSEMLemma}
\sum_{j=0}^N \omega_j J \dot{S}_j = 
\underbrace{
\numflux{f}^S_{(0,L)} -\numflux{f}^S_{(N,R)} + \frac{1}{2} \left( r_{(L,0)} + r_{(N,R)} \right) + \sum_{j,k=0}^N Q_{jk} r_{(j,k)} 
}_{\dot{S}^a}
+ r^{\nu},
\end{equation}
where the numerical entropy flux and the entropy production are consistent with the FV definitions,  \eqref{eq:numEntFlux} and \eqref{eq:EntropyDissipation}, respectively, $\dot{S}^a$ gathers the entropy flux and production of the advective and non-conservative terms, and $r^{\nu}$ is the entropy production due to the diffusive terms.
\end{lemma}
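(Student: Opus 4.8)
The plan is to contract the DGSEM scheme \eqref{eq:DGSEM_MHD} from the left with the nodal entropy variables $\entVar_j^T$ and sum over $j=0,\dots,N$. Since $\entVar_j=\partial S/\partial\state{u}$ evaluated at $\state{u}_j$, the chain rule collapses the left-hand side to $\sum_{j=0}^N\omega_j J\dot S_j$, and the right-hand side splits into an advective/non-conservative part $\sum_j\entVar_j^T\state{F}_j^{a,\DG}$ and a diffusive part $-\sum_j\entVar_j^T\state{F}_j^{\nu,\DG}$. The latter is, by definition, the quantity $r^{\nu}$; for completeness one can, following \cite{Bohm2018,Gassner2018}, use the SBP property together with the BR1 lifting \eqref{eq:gradEqDGSEM}---which is formulated in the entropy variables---to rewrite $r^{\nu}$ as interface contributions plus a sign-definite quadratic form in the numerical gradients, so that it is genuinely dissipative. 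The bulk of the work is therefore the advective/non-conservative part.

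For that part I would insert \eqref{eq:DGSEMMHDadv2} and separate the volume double sums (those carrying $Q_{jk}$) from the surface terms (those carrying $\delta_{j0},\delta_{jN}$). In the volume conservative-flux sum $-2\sum_{j,k}Q_{jk}\entVar_j^T\state{f}^{*}_{(j,k)}$, write $\entVar_j=\avg{\entVar}_{(j,k)}-\tfrac12\jump{\entVar}_{(j,k)}$: the average part pairs with the symmetric flux $\state{f}^{*}_{(j,k)}$, so the SBP identity $Q_{jk}+Q_{kj}=B_{jk}$ with $B=\mathrm{diag}(-1,0,\dots,0,1)$ and consistency \eqref{eq:consistentProp} reduce it to the boundary values $-\big(\entVar_N^T\state{f}^a_N-\entVar_0^T\state{f}^a_0\big)$. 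For the volume non-conservative sum $-\sum_{j,k}Q_{jk}\entVar_j^T\Jan^{*}_{(j,k)}$ I would use the identity $\numnonconsD{\Jan}_{(j,k)}=\tfrac12\big(\Jan_j+\Jan^{*}_{(j,k)}\big)$---which follows directly from \eqref{eq:DiamondFluxes}, \eqref{eq:volNonCons_D1} and the consistency of $\numnonconsD{\Jan}$---together with $\sum_k Q_{jk}=0$, to rewrite it as $-2\sum_{j,k}Q_{jk}\entVar_j^T\numnonconsD{\Jan}_{(j,k)}$. Adding this to the jump part of the conservative sum and comparing against \eqref{eq:EntropyDissipation}---converting the $\entVar_k^T\numnonconsD{\Jan}_{(k,j)}$ cross term and $\sum_{j,k}Q_{jk}\jump{\Psi}_{(j,k)}$ into boundary terms via $\sum_j Q_{jk}=B_{kk}$ and the consistency of $\numnonconsD{\Jan}$ and $\Psi_1$---yields exactly $\sum_{j,k=0}^N Q_{jk}r_{(j,k)}$ plus boundary remainders built from $\entVar^T\Jan$ and the entropy flux potential $\Psi_1$ at the nodes $0$ and $N$.

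It then remains to collect all boundary ($j=0,N$) contributions: the consistent volume fluxes $\pm(\state{f}^a+\Jan)$, the surface terms $\numfluxb{f}^a_{(\cdot,\cdot)}$ and $\numnonconsD{\Jan}_{(\cdot,\cdot)}$ at the outer interfaces $L$ and $R$, and the leftover $\entVar^T\Jan$ and $\Psi_1$ pieces. The $\entVar_N^T\state{f}^a_N,\entVar_0^T\state{f}^a_0,\entVar_N^T\Jan_N,\entVar_0^T\Jan_0$ terms cancel pairwise, and what survives at the right interface, $\Psi_{1,N}-\entVar_N^T\big(\numfluxb{f}^a_{(N,R)}+\numnonconsD{\Jan}_{(N,R)}\big)$, equals $-\numflux{f}^S_{(N,R)}+\tfrac12 r_{(N,R)}$ by the definitions \eqref{eq:numEntFlux}--\eqref{eq:EntropyDissipation}; symmetrically the left interface yields $\numflux{f}^S_{(0,L)}+\tfrac12 r_{(L,0)}$. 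This matching is the same computation made for the FV scheme in Lemma \ref{lemma:EntropyFV}, so it can be quoted. Assembling the volume term, the two interface terms, and $r^{\nu}$ gives \eqref{eq:EntropyDGSEMLemma}.

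The main obstacle is the bookkeeping in the volume non-conservative sum: the Powell and GLM terms break the skew-symmetry that makes the pure conservation-law estimate short, so one must keep track carefully of which node index carries the factors $\phiMHD_j$ and $\phiGLMs_{1,j}$ and repeatedly exploit $\sum_k Q_{jk}=0$, $\sum_j Q_{jk}=B_{kk}$, and the average structure of $\numnonconsD{\Jan}_{(j,k)}$. Getting every factor of $\tfrac12$ and every boundary remainder to align with the definitions of $\numflux{f}^S$ and $r$---as opposed to the ideal-MHD analogues without the GLM divergence-cleaning field---is where essentially all the care lies; the left-hand side, the surface matching, and the diffusive term are routine once the split-form/SBP machinery of \cite{Bohm2018} is in place.
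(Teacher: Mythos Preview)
Your proposal is correct and follows essentially the same route as the paper's proof: contract with $\entVar_j^T$, separate volume from surface contributions, use the identity $\Jan^{*}_{(j,k)}=2\numnonconsD{\Jan}_{(j,k)}-\Jan_j$ (equivalent to your $\numnonconsD{\Jan}_{(j,k)}=\tfrac12(\Jan_j+\Jan^{*}_{(j,k)})$) together with the SBP identities $\sum_k Q_{jk}=0$ and $\sum_j Q_{jk}=B_{kk}$ to collapse the volume sums to $\Psi_0-\Psi_N-\sum_{j,k}Q_{jk}r_{(j,k)}$, and then add and subtract exterior-state terms at the boundaries to assemble $\numflux{f}^S$ and $r$. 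The only cosmetic difference is that the paper rewrites $2Q_{jk}=B_{jk}-Q_{kj}+Q_{jk}$ directly and then re-indexes, whereas you split $\entVar_j=\avg{\entVar}_{(j,k)}-\tfrac12\jump{\entVar}_{(j,k)}$; both manipulations are equivalent uses of the SBP structure and symmetry of $\state{f}^{*}$.
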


\begin{proof}
The proof of Lemma \ref{lemma:EntropyDGSEM} can be found in \cite{Bohm2018}.
In Appendix \ref{sec:Proof_DGSEM_1D}, we summarize the proof in our own notation for the advective and non-conservative terms of the PDE, as these are the relevant terms for the present study.
The reader is referred to \cite{Bohm2018} for the proof that the diffusive terms of the semi-discrete system reduce to $r^{\nu} \le 0$ when contracted with the entropy variables and integrated over each element.

\end{proof}

As a consequence of Lemma \ref{lemma:EntropyDGSEM}, we can control the entropy behavior of the DGSEM discretization by selecting the volume and surface numerical fluxes.
If an entropy conserving flux is used for both the volume and surface numerical fluxes, the scheme is provably entropy conserving in its advective terms.

To obtain an entropy stable scheme, we can use an entropy conserving flux for the volume numerical fluxes and an entropy stable flux, such as \eqref{eq:EC_LFdiss2}, for the surface numerical fluxes.
We remark that the use of \eqref{eq:EC_LFdiss2} for the volume numerical fluxes produces an unpredictable behavior of the entropy balance since the second last term of \eqref{eq:EntropyDGSEMLemma} is weighted with the SBP operator, $\mat{Q}$, which can have positive and negative values.

\subsubsection{Entropy Balance of the Hybrid FV/DGSEM scheme} \label{sec:FirstOrder}

In \cite{Hennemann2020}, Hennemann et al. proved that a hybrid scheme that blends the DGSEM with the native LGL FV approximation is entropy stable for systems of conservation laws, given an appropriate choice of the numerical flux functions.
In this section, we provide a generalization of that proof that holds for DGSEM discretizations of non-conservative systems that are blended with first- and higher-order subcell Finite Volume discretizations.

\begin{lemma} \label{lemma:WillItBlend?}
The semi-discrete entropy balance of a discretization scheme for a non-conservative system that is obtained by blending two schemes at the element level,
\begin{equation}  \label{eq:Blending}
J \omega_j \dot{\state{u}}_j
= (1 - \alpha) \state{F}_j^{\DG} + \alpha \state{F}_j^{\FV}, ~~~ \forall j \in [0,N],
\end{equation}
where $\alpha$ is an  element-local blending coefficient, and both of the schemes are of the form
\begin{equation} \label{eq:SchemeToBlend}
\state{F}^i_j = \delta_{j0} \left(\numfluxb{f}^a_{(0,L)} + \numnonconsD{\Jan}_{(0,L)} \right)
- \delta_{jN} \left(\numfluxb{f}^a_{(N,R)} + \numnonconsD{\Jan}_{(N,R)} \right)
+ \state{F}^i_{\elemDom,j},
~~~~~~~~~~~~
i=\DG, \FV,
\end{equation}
with $\state{F}^i_{\elemDom,j}$ being any discretization terms that depend on the inner states of the element, is 
\begin{equation}
\sum_{j=0}^N J \omega_j \dot S_j=
\underbrace{
\numflux{f}^S_{(0,L)} -\numflux{f}^S_{(N,R)} + \frac{1}{2} \left( r_{(L,0)} + r_{(N,R)} \right) 
}_{\dot{S}_{\partial \elemDom}}
+ (1-\alpha) \dot{S}^{\DG}_{\elemDom} + \alpha \dot{S}^{\FV}_{\elemDom},
\end{equation}
where the terms gathered under $\dot{S}_{\partial \elemDom}$ are the numerical entropy flux, \eqref{eq:numEntFlux}, and the entropy production, \eqref{eq:EntropyDissipation}, on the boundaries of the element, which are intrinsic to the choice of the surface numerical flux function and the surface non-conservative term, and $\dot{S}^i_{\elemDom}$ is the entropy production of the scheme $i$ inside the element, which only depends on inner states.

\end{lemma}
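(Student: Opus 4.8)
The plan is to contract the blended scheme \eqref{eq:Blending} with the entropy variables $\entVar_j^T$ on the left, sum over all degrees of freedom $j = 0, \ldots, N$ in the element, and exploit the linearity of the blending in $\alpha$ together with the structural decomposition \eqref{eq:SchemeToBlend}. First I would write
\begin{equation*}
\sum_{j=0}^N J \omega_j \entVar_j^T \dot{\state{u}}_j
= (1-\alpha) \sum_{j=0}^N \entVar_j^T \state{F}_j^{\DG}
+ \alpha \sum_{j=0}^N \entVar_j^T \state{F}_j^{\FV},
\end{equation*}
and observe that the left-hand side equals $\sum_{j=0}^N J \omega_j \dot{S}_j$ by the chain rule and the definition $\entVar = \partial S / \partial \state{u}$, since within each element $J$ is constant. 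This is the quantity we want on the left of the claimed balance.

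Next I would split each $\state{F}_j^i$ ($i = \DG, \FV$) via \eqref{eq:SchemeToBlend} into the boundary part — the terms multiplied by $\delta_{j0}$ and $\delta_{jN}$ involving $\numfluxb{f}^a_{(0,L)}$, $\numfluxb{f}^a_{(N,R)}$, $\numnonconsD{\Jan}_{(0,L)}$, $\numnonconsD{\Jan}_{(N,R)}$ — and the interior part $\state{F}^i_{\elemDom,j}$. The key point is that the boundary part is \emph{identical} for the DG and FV schemes (this is exactly the hypothesis of the lemma), so when contracted and summed, the boundary contribution factors out of the convex combination: $(1-\alpha)(\cdots) + \alpha(\cdots) = (\cdots)$. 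I would then show that the surviving boundary contraction, $\delta_{j0}$ and $\delta_{jN}$ selecting $j=0$ and $j=N$, can be rewritten — following the same algebraic manipulation used in the proofs of Lemma~\ref{lemma:EntropyFV} and Lemma~\ref{lemma:EntropyDGSEM} (Appendices~\ref{sec:Proof_FV_1D} and~\ref{sec:Proof_DGSEM_1D}) — into the numerical entropy flux differences $\numflux{f}^S_{(0,L)} - \numflux{f}^S_{(N,R)}$ plus the boundary entropy productions $\tfrac12(r_{(L,0)} + r_{(N,R)})$, using the Definitions \eqref{eq:numEntFlux} and \eqref{eq:EntropyDissipation}. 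This gives precisely $\dot{S}_{\partial \elemDom}$. For the interior parts, I would simply \emph{define} $\dot{S}^i_{\elemDom} := \sum_{j=0}^N \entVar_j^T \state{F}^i_{\elemDom,j}$; since $\state{F}^i_{\elemDom,j}$ depends only on inner states, so does $\dot{S}^i_{\elemDom}$, and the convex combination $(1-\alpha)\dot{S}^{\DG}_{\elemDom} + \alpha \dot{S}^{\FV}_{\elemDom}$ is exactly what remains. Collecting the three pieces yields the claimed identity.

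The main obstacle — and the only step requiring real care rather than bookkeeping — is verifying that the boundary contraction genuinely collapses to $\dot{S}_{\partial \elemDom}$ with the $r_{(L,0)}$ and $r_{(N,R)}$ terms appearing with the correct factor of $\tfrac12$. This requires unpacking how $\entVar_0^T \numfluxb{f}^a_{(0,L)}$ and $\entVar_0^T \numnonconsD{\Jan}_{(0,L)}$ combine with the outer-state quantities: one adds and subtracts $\entVar_L^T \numfluxb{f}^a_{(L,0)}$ and the relevant $\Psi$ and $\theta \vec{B}$ terms from the entropy flux potential \eqref{eq:entPotential_1D}, using the symmetry of $\numfluxb{f}^a$, \eqref{eq:conservativeProp}, to match the structure of \eqref{eq:numEntFlux} and \eqref{eq:EntropyDissipation}. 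Since this manipulation is mechanically the same one already carried out in the referenced appendices — and the hypothesis that both schemes share the same boundary term is exactly engineered to make the blending inert there — I expect no genuine difficulty, only that the algebra be reproduced faithfully. I would therefore present the boundary computation once in detail and then note that the interior and blending steps follow by the linearity argument above.
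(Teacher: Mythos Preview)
Your approach is essentially the paper's: contract with $\entVar_j^T$, sum over $j$, use linearity in $\alpha$, split each scheme via \eqref{eq:SchemeToBlend} into a common boundary part and an interior part, and reuse the add/subtract manipulation from Appendices~\ref{sec:Proof_FV_1D}--\ref{sec:Proof_DGSEM_1D}. There is, however, one concrete slip in your bookkeeping.

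You assert that the boundary contraction rewrites \emph{exactly} as $\dot{S}_{\partial\elemDom}$, and then define $\dot{S}^i_{\elemDom} := \sum_{j} \entVar_j^T \state{F}^i_{\elemDom,j}$. But a direct check from Definitions \eqref{eq:numEntFlux} and \eqref{eq:EntropyDissipation} gives
\[
\numflux{f}^S_{(0,L)} - \numflux{f}^S_{(N,R)} + \tfrac{1}{2}\bigl(r_{(L,0)} + r_{(N,R)}\bigr)
= \entVar_0^T\bigl(\numfluxb{f}^a_{(0,L)} + \numnonconsD{\Jan}_{(0,L)}\bigr)
- \entVar_N^T\bigl(\numfluxb{f}^a_{(N,R)} + \numnonconsD{\Jan}_{(N,R)}\bigr)
- \Psi_0 + \Psi_N,
\]
so the raw boundary contraction equals $\dot{S}_{\partial\elemDom} + \Psi_0 - \Psi_N$, not $\dot{S}_{\partial\elemDom}$ alone. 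The leftover $\Psi_0 - \Psi_N$ depends only on inner states and must be absorbed into the interior term; the paper accordingly defines
\[
\dot{S}^i_{\elemDom} = \sum_{j=0}^N \entVar_j^T \state{F}^i_{\elemDom,j} + \Psi_0 - \Psi_N.
\]
Since this extra piece is the same for both $i=\DG$ and $i=\FV$, it is also inert under the convex combination, and with this corrected definition your argument goes through verbatim and coincides with the paper's proof.
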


\begin{proof}

The entropy balance within an element for a scheme $i$ of the form \eqref{eq:SchemeToBlend} reads
\begin{equation} \label{eq:EntropyBlended1}
\sum_{j=0}^N J \omega_j \dot S^i_j=
\sum_{j=0}^N \entVar^T \state{F}^i_j =
\entVar^T_0  \numfluxb{f}^a_{(0,L)} 
+ \entVar^T_0  \numnonconsD{\Jan}_{(0,L)}
- \entVar^T_N  \numfluxb{f}^a_{(N,R)} 
- \entVar^T_N  \numnonconsD{\Jan}_{(N,R)} 
+ \sum_{j=0}^N \entVar^T \state{F}^i_{\elemDom,j}.
\end{equation}
Following the strategy used in the proofs of Lemmas \ref{lemma:EntropyFV} and \ref{lemma:EntropyDGSEM}, we sum and subtract boundary terms to obtain
\begin{align} \label{eq:EntropyBlended}
\sum_{j=0}^N J \omega_j \dot S^i_j=&
\entVar^T_0  \numfluxb{f}^a_{(0,L)} 
+ \entVar^T_0  \numnonconsD{\Jan}_{(0,L)}
- \entVar^T_N  \numfluxb{f}^a_{(N,R)} 
- \entVar^T_N  \numnonconsD{\Jan}_{(N,R)} 
+ \sum_{j=0}^N \entVar^T \state{F}^i_{\elemDom,j} \nonumber\\
&
+ \frac{1}{2} \left(
\entVar^T_L  \numfluxb{f}^a_{(L,0)} 
+ \entVar^T_L  \numnonconsD{\Jan}_{(L,0)} 
- \Psi_L 
- \entVar^T_R  \numfluxb{f}^a_{(R,N)} 
- \entVar^T_R  \numnonconsD{\Jan}_{(R,N)} 
+ \Psi_R
\right)
- \Psi_0 + \Psi_N
\nonumber\\
&
- \frac{1}{2} \left(
\entVar^T_L  \numfluxb{f}^a_{(L,0)} 
+ \entVar^T_L  \numnonconsD{\Jan}_{(L,0)} 
- \Psi_L 
- \entVar^T_R  \numfluxb{f}^a_{(R,N)} 
- \entVar^T_R  \numnonconsD{\Jan}_{(R,N)} 
+ \Psi_R
\right)
+ \Psi_0 - \Psi_N
\nonumber\\
=&
\numflux{f}^S_{(0,L)} -\numflux{f}^S_{(N,R)} + \frac{1}{2} \left( r_{(L,0)} + r_{(N,R)} \right) 
+ \dot{S}^i_{\elemDom},
\end{align}
where we used the definition of the numerical entropy flux, \eqref{eq:numEntFlux}, and the entropy production, \eqref{eq:EntropyDissipation}.
The entropy production in the volume, $\dot{S}^i_{\elemDom}$, only depends on inner degrees of freedom and can be written as
\begin{equation}
\dot{S}^i_{\elemDom} = 
\sum_{j=0}^N \entVar^T \state{F}^i_{\elemDom,j} + \Psi_0 - \Psi_N.
\end{equation}

It is now easy to see that the entropy balance of the blended scheme, \eqref{eq:Blending}, after contracting with the entropy variables and integrating over the element reads
\begin{equation}
\sum_{j=0}^N J \omega_j \dot S_j=
\numflux{f}^S_{(0,L)} -\numflux{f}^S_{(N,R)} + \frac{1}{2} \left( r_{(L,0)} + r_{(N,R)} \right) 
+ (1-\alpha) \dot{S}^{\DG}_{\elemDom} + \alpha \dot{S}^{\FV}_{\elemDom}.
\end{equation}

\end{proof}

The main consequence of Lemma \ref{lemma:WillItBlend?} is that the resulting scheme is semi-discretely \textit{entropy consistent} with the blended schemes.
In other words, if both blended schemes are entropy conservative,
\begin{equation}
\frac{1}{2} \left( r_{(L,0)} + r_{(N,R)} \right) 
+ (1-\alpha) \dot{S}^{\DG}_{\elemDom} + \alpha \dot{S}^{\FV}_{\elemDom} = 0,
\end{equation}
the resulting scheme is entropy conservative.
This is true even if the blending coefficient, $\alpha$, is different for every element since, in this case, the entropy balance is independent of $\alpha$.
Furthermore, the resulting scheme is entropy stable if one of the blended schemes is entropy stable and the other one is entropy conservative or entropy stable.\\

The proposed hybrid scheme, \eqref{eq:BlendedScheme}, reads
\begin{equation} \label{eq:BlendedScheme2}
\underbrace{J \omega_j}_{\Delta x_j} \dot{\state{u}}_j = 
(1-\alpha) \state{F}_j^{a,\DG} 
+ \alpha \state{F}_j^{a,\FV} 
- \state{F}_j^{\nu,\DG},
\end{equation}

Since both $\state{F}_j^{a,\FV}$, \eqref{eq:FVMHD_BlendedForm}, and $\state{F}_j^{a,\DG}$, \eqref{eq:DGSEMMHDadv2}, are of the form \eqref{eq:SchemeToBlend}, we can use Lemma \ref{lemma:WillItBlend?} to analyze the entropy behavior of the resulting scheme.
Gathering the results from the Lemmas \ref{lemma:EntropyFV} and \ref{lemma:EntropyDGSEM}, the entropy balance within an element for the scheme \eqref{eq:BlendedScheme2} reads
\begin{equation}
\sum_{j=0}^N J \omega_j \dot S_j=
\numflux{f}^S_{(0,L)} -\numflux{f}^S_{(N,R)} + \frac{1}{2} \left( r_{(L,0)} + r_{(N,R)} \right) 
+
(1-\alpha) \underbrace{ 
\sum_{j,k=0}^N Q_{jk} r^{\DG}_{(j,k)}
}_{\dot{S}^{\DG}_{\elemDom}}
+
\alpha \underbrace{ 
\sum_{j=0}^{N-1} r^{\FV}_{(j,j+1)}
}_{\dot{S}^{\FV}_{\elemDom}}
+ r^{\nu},
\end{equation}
where $r_{(\cdot,\cdot)}$ is the entropy production associated with the numerical flux that is used on the element boundaries by the DGSEM and FV methods, $\numflux{f}^{S}_{(\cdot,\cdot)}$ is the entropy flux across the element boundaries, $r^{\DG}_{(j,k)}$ is the entropy production associated with the volume numerical flux of the DGSEM method, $\state{f}^{*}_{(j,k)}$, and $r^{\FV}$ is the entropy production associated with $\numfluxb{f}^{a,\FV}$.
We can choose these fluxes independently to control the entropy behavior of the scheme.

\section{Enhancing the Resolution of the FV/DGSEM blended scheme} \label{sec:TVD-ES}

A first-order FV scheme can be very sensitive to changes in cell sizes.
As a result, the LGL subcell distribution can cause the mesh to be imprinted on the solution (see e.g. \cite{Sonntag2014} and the \nameref{sec:Results} section).
In order to mitigate this phenomenon, in this section we propose higher-order FV schemes that use a reconstruction procedure, which can be seamlessly blended with the high-order DGSEM to obtain an entropy stable method.
The main focus here is to improve the discretization of the conservative terms.

Several reconstruction procedures are available in the FV literature \cite{VanLeer1974,Coquel1996,Coquel2006,Fjordholm2012}.
Most of these methods, instead of a piece-wise constant solution, assume a higher-order reconstructed solution in the FV cells, with which the numerical flux functions are evaluated.
Unfortunately, when the numerical flux is evaluated on a reconstructed solution, it is complicated and expensive, if not impossible, to guarantee entropy stability, especially if we want to blend the FV scheme with a high-order DG scheme.

We focus on the framework developed by \citet{Fjordholm2012}, as it can be used to construct provably entropy stable FV methods using inexpensive total variation diminishing (TVD) reconstructions.
\citet{Fjordholm2012} constructs arbitrarily high-order entropy stable schemes, where the stencil grows with the order of accuracy.
These schemes have been successfully used for the Euler \cite{Biswas2018} and MHD equations \cite{Chandrashekar2016}.

We use the second-order scheme of Fjordholm et al., which we will call the \textit{TVD-ES} scheme in the remaining parts of the paper, as it uses a TVD reconstruction and preserves the entropy stability.
The numerical flux of the TVD-ES scheme between the FV subcells $L$ and $R$ reads
\begin{equation} \label{eq:SchemeFjord}
\state{f}_1^{\tvdes}(\state{u}_L,\state{u}_R) := \state{f}_1^{\ec}(\state{u}_L,\state{u}_R) 
- \frac{1}{2} 
\mat{\mathcal{D}}^{\entVar}
\jumpR{\entVar}_{(L,R)},
\end{equation}
where the EC flux is evaluated on the mean (not reconstructed) states of the subcells, $\state{u}_L$ and $\state{u}_R$, and the new jump operator, $\jumpR{\entVar}_{(L,R)}$, denotes the symmetric jump in the reconstructed entropy variables.
To ensure entropy stability, the reconstruction must be done such that
\begin{equation} \label{eq:ReconsJump}
\jumpR{\entVar}_{(L,R)} = 
\left( \mat{R}^T \right)^{-1} \mat{B}^{\es} ~ \mat{R}^T \jumpS{\entVar}_{(L,R)},
\end{equation}
where the reconstruction procedure is defined by the diagonal matrix $\mat{B}^{\es}>0$.
We remark that \citet{Winters2016} showed that the EC flux is second-order accurate if, and only if, the mesh is regular.
However, even for the irregular meshes that we consider in this paper, the reconstructed dissipation operator mitigates the mesh imprinting that is observed with the original Native LGL Finite Volume approximation, as we show in the \nameref{sec:Results} section.

\citet{Fjordholm2012} defined the \textit{scaled entropy variables} as $\scalEntVar = \mat{R}^T \entVar$, such that \eqref{eq:ReconsJump} yields
\begin{equation} \label{eq:ReconsJumpScal}
\jumpR{\scalEntVar}_{(L,R)} = \mat{B}^{\es}  \jumpS{\scalEntVar}_{(L,R)}.
\end{equation}

Since $\mat{B}^{\es}$ is a positive diagonal matrix, the reconstruction procedure must guarantee component-wise that the symmetric jump in the reconstructed scaled entropy variables has the same sign as the symmetric jump in the scaled entropy variables.
In other words, the reconstruction must fulfill the so-called \textit{sign property} of the scaled entropy variables.
According to \citet{Fjordholm2012}, and \citet{Biswas2018}, the \textit{minmod} limiter is the only symmetric TVD limiter that fulfills the sign property.

Using Definition \ref{eq:EntropyDissipation}, it is easy to show that the entropy production between the degrees of freedom $j$ and $k$ ($j<k$) associated with \eqref{eq:SchemeFjord} is
\begin{align} \label{eq:EntProdFjod}
r_{(j,k)} =& - \frac{1}{2} \jump{\entVar}_{(j,k)}^T \mat{\mathcal{D}}^{\entVar} \jumpR{\entVar}_{(j,k)} \nonumber\\
=& - \frac{1}{2} \jump{\entVar}_{(j,k)}^T
\underbrace{ 
\mat{R}~|\mat{\Lambda}| ~ \mat{\mathcal{Z}}~ {\mat{R}^T}
}_{ \mat{\mathcal{D}}^{\entVar} }
\underbrace{ 
{\left( \mat{R}^T \right)^{-1}} \mat{B}^{\es} ~ \mat{R}^T \jump{\entVar}_{(j,k)}}_{\jumpR{\entVar}_{(j,k)}} \nonumber\\
=& - \frac{1}{2} \jump{\entVar}_{(j,k)}^T
\mat{R}~
|\mat{\Lambda}| ~ \mat{\mathcal{Z}}~ \mat{B}^{\es}
~\mat{R}^T \jump{\entVar}_{(j,k)}.
\end{align}
In other words, the scheme is always entropy stable by construction, as entropy is either conserved or dissipated.
We remark that it is not necessary to compute the inverse of the transpose of the eigenvalue matrix, $\mat{R}^T$, to evaluate $\mat{\mathcal{D}}^{\entVar} \jumpR{\entVar}_{(j,k)}$.

To obtain an entropy stable scheme that blends the DGSEM with the method of Fjordholm et al., we propose a reconstruction procedure that, again, interprets the nodal DG values as subcell mean values, and ensures that the subcell FV scheme is of the form \eqref{eq:SchemeToBlend}, such that Lemma \ref{lemma:WillItBlend?} holds.
We make the FV boundary values match with the DGSEM boundary values, and reconstruct the scaled entropy variables inside each subcell with a two-point symmetric \textit{minmod} limiter in reference space for irregular meshes, such that the sign property is preserved.

In our proposed reconstruction, the $l^{\text{th}}$ component of the scaled entropy variables inside every subcell is given by
\begin{equation} \label{eq:minmod_ini}
\tilde{\scalEntVar}_j^l (\xi) = \scalEntVar_j^l + (\xi - \xi_j) \Theta_j^l,
\end{equation}
and the slope is computed for the inner subcells with the \textit{minmod} function as 
\begin{equation}
\Theta_j^l = \mathrm{MINMOD} \left( \frac{\scalEntVar^l_{j+1} - \scalEntVar^l_{j}} {\xi_{j+1} - \xi_j} , 
\frac{\scalEntVar^l_{j} - \scalEntVar^l_{j-1}}{\xi_{j} - \xi_{j-1}} \right).
\end{equation}

We have several alternatives for the subcells that lie on the boundary of the element.
For instance,
\begin{itemize}
\item we can assume them to have piece-wise constant values,
\begin{equation} \label{eq:TVDrecons_NoRecons}
\Theta_0^l = \Theta_N^l = 0,
\end{equation}

\item we can use a central slope.
\begin{equation} \label{eq:TVDrecons_Central}
\Theta_0^l =\frac{\scalEntVar^l_{1} - \scalEntVar^l_{0}} {\xi_{1} - \xi_0}, ~~~~
\Theta_N^l =\frac{\scalEntVar^l_{0} - \scalEntVar^l_{N-1}} {\xi_{N} - \xi_{N-1}},
\end{equation}

\item or we can use the neighbor information to compute the slope with the \textit{minmod} limiter,
\begin{equation} \label{eq:TVDrecons_Bound}
\Theta_0^l = \minmod \left( \frac{\scalEntVar^l_{1} - \scalEntVar^l_{0}} {\xi_{1} - \xi_0} , 
\frac{\scalEntVar^l_{1} - \scalEntVar^{\underline{l}}_{N}}{\xi_{1} - \xi_{0}} \right), ~~~~
\Theta_N^l = \minmod \left( \frac{\scalEntVar^l_{0} - \scalEntVar^l_{N-1}} {\xi_{N} - \xi_{N-1}} , 
\frac{\scalEntVar^{\underline{l}}_{0} - \scalEntVar^l_{N-1}}{\xi_{N} - \xi_{N-1}} \right),
\end{equation}
where $\scalEntVar^{\underline{l}}_{N}$ is the $l^{\mathrm{th}}$ value of $\scalEntVar$ for the node $N$ of the element on the left, $\scalEntVar^{\underline{l}}_{0}$ is the $l^{\mathrm{th}}$ value of $\scalEntVar$ for the node $0$ of the element on the right.
\end{itemize} 

The \textit{minmod} function is defined as
\begin{equation} \label{eq:minmod_end}
\minmod(a,b) = 
\begin{cases}
\sign(a) \min(|a|,|b|) & \mathrm{if}~\sign(a) = \sign(b), \\
0 & \mathrm{otherwise}.
\end{cases}
\end{equation}

A schematic representation of the reconstruction procedure is given in Figure \ref{fig:TVD_Recons} for $N=5$ and a reconstruction of the boundary subcells that uses the neighbor information.
Note that the FV and DGSEM solutions concur on the element boundaries.

Note that \eqref{eq:TVDrecons_Bound} needs the same connectivity between elements as the DGSEM method.
As a consequence, the method detailed here can be implemented in a DGSEM code by only replacing the volume integral and without any additional MPI communication (the MPI footprint is the same as in the DGSEM).

\begin{figure}[htb]
\centering
\includegraphics[width=0.7\linewidth]{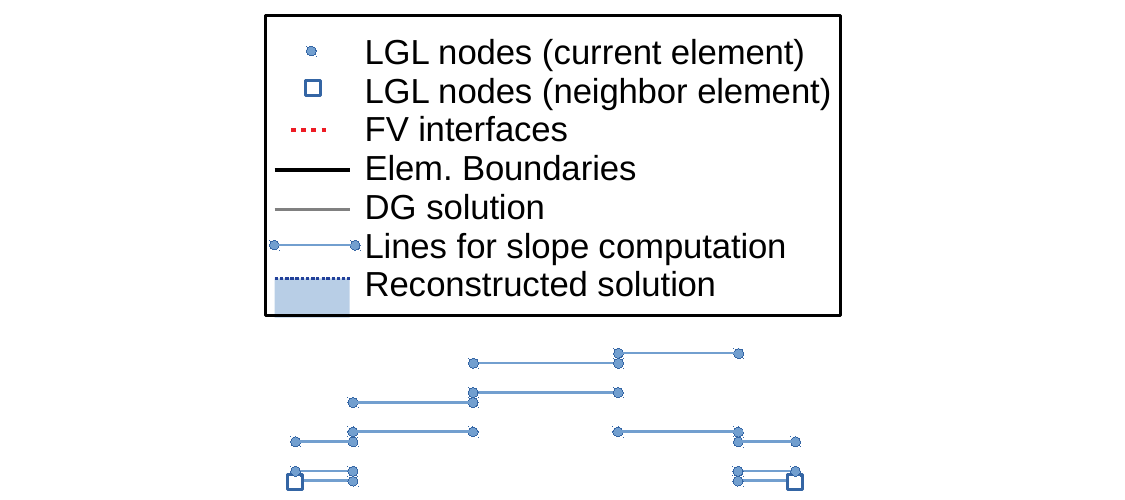}
\includegraphics[width=0.7\linewidth]{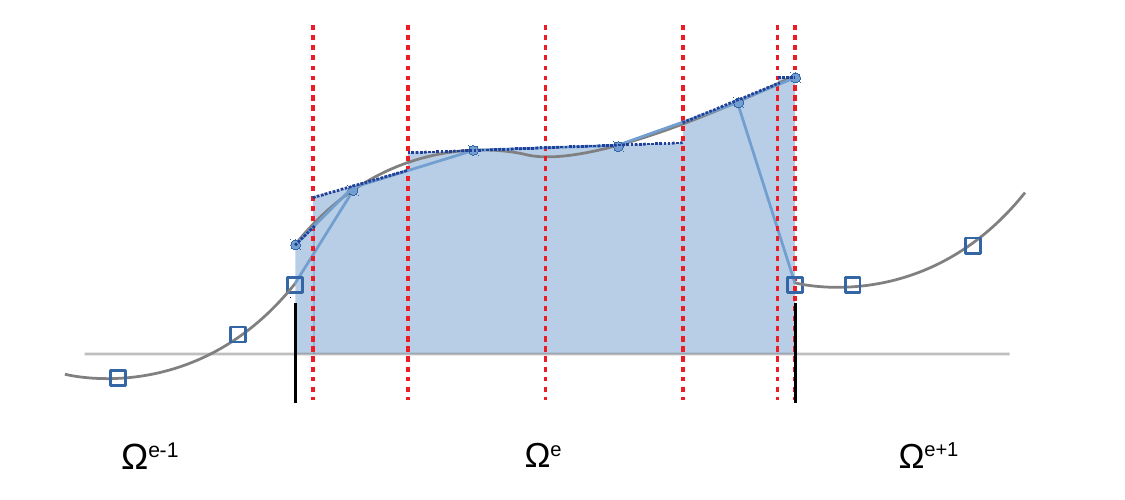}
\caption{Schematic representation of the node connectivities used to compute the slopes for each subcell interface (top) and example of a reconstructed solution using the \textit{minmod} limiter (bottom) for a DGSEM element with FV subcells ($N=5$).}
\label{fig:TVD_Recons}
\end{figure}

\section{Shock Indicator} \label{sec:Indicator}

The methods described above can be used with any troubled cell indicator.
For simplicity, we use the shock sensor introduced by \citet{Persson2006} that compares the modal energy of the highest polynomial modes of an \textit{indicator quantity} with its overall modal energy.
We transform our indicator quantity, $\epsilon$, from a (collocated) nodal representation to a hierarchical modal representation with Legendre polynomials,
\begin{equation}
\epsilon (\xi)= 
\underbrace{\sum_{j=0}^N {\epsilon}_j \ell_j (\xi) }_{\mathrm{nodal}} =
\underbrace{\sum_{j=0}^N \hat{\epsilon}_j \tilde{L}_j (\xi)}_{\mathrm{modal}},
\end{equation}
where $\{\hat{\epsilon}\}_{j=0}^N$ are the modal coefficients and $\{\tilde{L}_j\}_{j=0}^N$ are the Legendre polynomials.

We compute for each DG element how much energy is contained in the highest modes relative to the total energy of the polynomial as follows
\begin{equation}
\mathbb{E} = \max\left(\frac{\hat{\epsilon}_N^2}{\sum_{j=0}^{N} \hat{\epsilon}_j^2}, \frac{\hat{\epsilon}_{N-1}^2}{\sum_{j=0}^{N-1} \hat{\epsilon}_j^2}\right),
\end{equation}
where we use the highest and second highest mode to avoid odd/even effects when approximating element local functions.

Initially, we define the blending coefficient as
\begin{equation}
\tilde \alpha = \frac{1}{1+\exp \left( \frac{-s}{\mathbb{T}}(\mathbb{E}-\mathbb{T})\right)},
\end{equation}
where the so-called sharpness, $s=9.21024$, is selected as in \cite{Hennemann2020} to obtain $\alpha(\mathbb{E}=0)=0.0001$, and the so-called threshold is computed as
\begin{equation}
\mathbb{T}(N)=0.5 \cdot 10^{-1.8 (N+1)^{0.25}},
\end{equation}
based on \cite{Hennemann2020} and motivated by the discussion in \cite{Persson2006} about the spectral decay of the modes that is proportional to $1/N^4$.

The final blending coefficient is computed as
\begin{equation}
\alpha = 
\begin{cases}
0 & \mathrm{if}~ \tilde \alpha < \alpha_{\min}, \\
\tilde \alpha & \mathrm{if}~ \tilde \alpha_{\min} \le \alpha \le \alpha_{\max},  \\
\alpha_{max} & \mathrm{if}~ \tilde \alpha > \alpha_{\max},
\end{cases}
\end{equation}
where we choose $\alpha_{\min}=0.01$ as a way to improve the computational efficiency of the method in regions where only small limiting is needed, and $\alpha_{\max}=1$ to be able to deal with strong shocks.

The modal indicator described above might not be optimal for systems with moving shocks.
For instance, we have observed that the indicator may switch on and off depending on the relative position of the shocks with the element boundaries. 
To avoid large oscillations of the blending coefficient in time, we perform a time relaxation such that the blending coefficient in time $i+1$ is set to 
\begin{equation}
\alpha^{i+1}  = \max \{ \alpha^{i+1}, 0.7 \alpha^i \},
\end{equation}
unless otherwise explicitly stated.
Furthermore, to avoid large jumps in the blending coefficient from element to element, unless otherwise explicitly stated, we perform two space propagation sweeps such that for each element
\begin{equation}
\alpha  = \max_E \{ \alpha, 0.7 \alpha_E \},
\end{equation}
where $\alpha_E$ denotes the blending coefficient of any neighbor element.

\section{Numerical Results} \label{sec:Results}

In this section, we present the numerical validation of the methods presented in the paper and use them to solve well-known benchmark tests and applications.
For simplicity, we use the entropy stable version of the Rusanov scheme in all the cases that need dissipation in the numerical fluxes, i.e. \eqref{eq:EC_LFdiss2} where $\mat{\Lambda}$ is a diagonal matrix with the largest advective eigenvalue in all the nonzero entries.
As will be seen, the ES Rusanov solver is enough to obtain sharp shock profiles with the high-order DGSEM/FV method that we use.
Furthermore, we use the EC flux of \citet{Derigs2018} (see Appendix \ref{sec:ECflux}) for the DGSEM volume flux and, for consistency, we use the same numerical flux in the FV method for the element boundaries and the element interior, $\numfluxb{f}^a = \numfluxb{f}^{a,\FV}$.

In all cases, the time integration is performed with the fourth-order Strong Stability-Preserving Explicit Runge-Kutta (SSPRK) method of five stages introduced by \citet{Spiteri2002}, and the blending coefficient, $\alpha$, is computed before every Runge-Kutta stage.

The time-step size is computed as \cite{Krais2019}
\begin{equation}
\Delta t = \min \left( 
\frac{\text{CFL} \, \beta^a(N) \Delta x}{\lambda^a_{\max} (2N+1)}
,
\frac{\text{CFL}^{\nu} \, \beta^{\nu}(N) \Delta x^2}{\lambda^{\nu}_{\max} (2N+1)^2}
 \right)
\end{equation}
where CFL, CFL$^{\nu} \le 1$ are the advective and diffusive CFL numbers, $\lambda^a_{\max}$ and $\lambda^{\nu}_{\max}$ are the largest advective and diffusive eigenvalues, respectively, $\Delta x$ is the element size, and $\beta^a$ and $\beta^{\nu}$ are proportionality coefficients that are derived for the SSPRK from numerical experiments such that CFL, CFL$^{\nu} \le 1$ must hold to obtain a (linear) CFL-stable time step for all polynomial degrees.
As a "conservative" approach, we use $\text{CFL}=\text{CFL}^{\nu}=0.5$.

Furthermore, the hyperbolic divergence cleaning speed, $c_h$, is selected at each time step as the maximum value that retains CFL-stability, and we use $\mu_0=1$ as the magnetic permeability of the medium and $\gamma=5/3$ as the heat capacity ratio.

All the FV/DGSEM simulations presented in this section were computed with the 3D open-source code FLUXO (\url{www.github.com/project-fluxo}).
The 2D simulations were computed with 2D extruded meshes with one-element in the $z$-direction.

\subsection{Numerical Verification of the Schemes}

The goal of this test is to numerically validate that the method is indeed free-stream-preserving and EC/ES for general 3D meshes. 
We use a 3D heavily warped mesh adapted from \cite{chan2019efficient}.
We start with the cube $\Omega=[0,3]^3$ with $10^3$ elements and apply the transformation
\begin{equation}
X(\xi,\eta,\zeta) = (x,y,z): \Omega \rightarrow f(\Omega)
\end{equation}
such that
\begin{align}
y &= \eta + 
\frac{1}{8} L_y
\cos \left( \frac{3}{2} \pi \frac{2\xi - L_x}{L_x} \right) 
\cos \left( \frac{\pi}{2} \frac{2\eta- L_y }{L_y} \right)
\cos \left( \frac{\pi}{2} \frac{2\zeta- L_z }{L_z} \right), \\
x &= \xi + 
\frac{1}{8} L_x 
\cos \left( \frac{\pi}{2} \frac{2\xi- L_x}{L_x} \right)
\cos \left( 2 \pi \frac{2y- L_y }{L_y} \right)
\cos \left( \frac{\pi}{2} \frac{2\zeta- L_z}{L_z} \right), \\
z &= \zeta + 
\frac{1}{8} L_z 
\cos \left( \frac{\pi}{2} \frac{2x- L_x}{L_x} \right)
\cos \left( \pi \frac{2y- L_y }{L_y}  \right)
\cos \left( \frac{\pi}{2} \frac{2\zeta- L_z}{L_z} \right),
\end{align}
where $L_x = L_y = L_z = 3$. 
The mesh, which can be seen in Figure \ref{fig:warped3Dmesh}, was generated with the HOPR package \cite{hindenlang2015mesh} with a geometry mapping degree $N_{\mathrm{geo}}=4$.
All boundaries are set to periodic.

\begin{figure}[htb]
\centering
\includegraphics[trim=500 0 500 0,clip,width=0.45\linewidth]{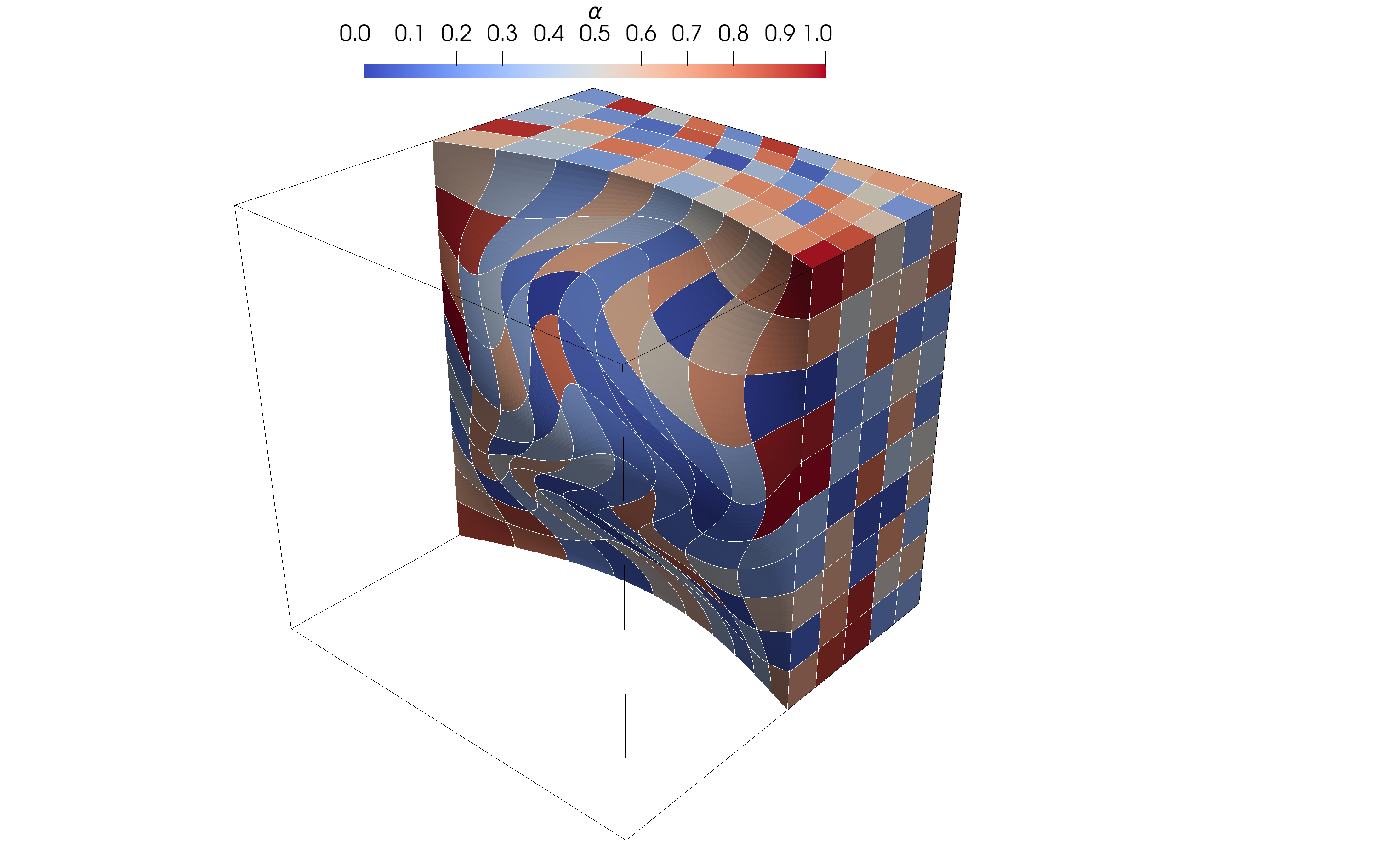}\\

\caption{Slice cut visualization of the heavily warped mesh and initial random blending coefficients for the free-stream-preservation test.}
\label{fig:warped3Dmesh}
\end{figure}

Free-stream preservation (FSP) must hold because, as is shown by \citet{Hennemann2020}, it is necessary to ensure entropy conservation and stability.
To test FSP, the initial condition is set to a uniform flow, $\state{u}(t=0) = \state{u}_{\mathrm{FSP}}$, given in Table \ref{tab:statesBlast}.
%
The blending function is selected randomly in each element of the domain at each Runge-Kutta stage (see Figure \ref{fig:warped3Dmesh}).
Furthermore, the spatial propagation and time relaxation of the blending coefficient were deactivated for this experiment, such that the random blending coefficients are not affected.

Table \ref{tab:3DcurvedFSP} summarizes the results for the free-stream preservation test.
These results were obtained using $N=4$ and $\mathrm{CFL}=0.1$.
Similar results can be obtained with other polynomial degrees and CFL numbers.
The $\mathbb{L}^2$ norm is computed as
\begin{equation}
\norm{u}_{\mathbb{L}^2} = 
\left( \frac{\int^N_{\Omega} u^2 \d \vec{x}}{\int^N_{\Omega} \d \vec{x}} \right)^{\frac{1}{2}},
\end{equation}
where the superscript $N$ on the integral denotes the approximation of it with a quadrature rule with $N+1$ points per element and direction.

The second column of Table \ref{tab:3DcurvedFSP} shows the mean rate of change of all state quantities at $t=0$ for the entropy conservative (EC), entropy stable (ES) and TVD-reconstructed entropy stable (TVD-ES) surface numerical fluxes.
The value is the same for the three choices of the numerical flux since, in the absence of jumps in the solution, the dissipation term is equal to zero.
As expected, the rate of change of the state variables is near machine precision at the beginning of the simulation.

The third, fourth and fifth columns of Table \ref{tab:3DcurvedFSP} show the mean deviation from the initial condition of all state quantities at $t=1$ for the EC, ES and TVD-ES surface numerical fluxes, respectively.
The three different choices of the surface numerical flux preserve the free stream with errors near machine precision.
The error of the ES schemes is lower than the one of the EC scheme since the extra dissipation that they provide smoothens away the deviations from the free stream.

\begin{table}[hb]
\centering
\begin{tabular}{c|c|ccc}
 & $\displaystyle \norm{\bigpartialderiv{u}{t}(t=0)}_{\mathbb{L}^2}$ & \multicolumn{3}{c}{$\displaystyle \norm{u(t=1) - u(t=0)}_{\mathbb{L}^2}$ } \\ \hline
$u$ & EC,ES,TVD-ES & EC & ES & TVD-ES \\ \hline
$\rho$ & 
$1.59\cdot 10^{-13}$ & $2.27\cdot 10^{-13}$ & $4.25\cdot 10^{-15}$ & $4.91\cdot 10^{-15}$ \\
$\rho v_1$ & 
$9.85\cdot 10^{-13}$ & $2.63\cdot 10^{-13}$ & $1.28\cdot 10^{-14}$ & $1.39\cdot 10^{-14}$ \\
$\rho v_2$ & 
$8.90\cdot 10^{-13}$ & $2.98\cdot 10^{-13}$ & $1.33\cdot 10^{-14}$ & $1.46\cdot 10^{-14}$ \\
$\rho v_3$ & 
$9.93\cdot 10^{-13}$ & $3.22\cdot 10^{-13}$ & $1.39\cdot 10^{-14}$ & $1.50\cdot 10^{-14}$ \\
$\rho E$ & 
$8.73\cdot 10^{-13}$ & $2.09\cdot 10^{-13}$ & $2.30\cdot 10^{-14}$ & $2.48\cdot 10^{-14}$ \\
$B_1$ & 
$1.55\cdot 10^{-13}$ & $2.46\cdot 10^{-13}$ & $7.67\cdot 10^{-15}$ & $8.81\cdot 10^{-15}$ \\
$B_2$ & 
$1.78\cdot 10^{-13}$ & $2.77\cdot 10^{-13}$ & $8.21\cdot 10^{-15}$ & $9.41\cdot 10^{-15}$ \\
$B_3$ & 
$1.59\cdot 10^{-13}$ & $2.99\cdot 10^{-13}$ & $8.16\cdot 10^{-15}$ & $9.41\cdot 10^{-15}$ \\
$\psi$ & 
$5.98\cdot 10^{-13}$ & $3.61\cdot 10^{-14}$ & $9.54\cdot 10^{-15}$ & $9.88\cdot 10^{-15}$ \\
\hline
\end{tabular}
\caption{Mean rate of change of the state variables at $t=0$ and their absolute deviation from the initial state at $t=1$ for the uniform flow computed with $\mathrm{CFL}=0.1.$}
\label{tab:3DcurvedFSP}
\end{table}

Now, to test entropy conservation and stability we initialize a weak magnetic blast in the same heavily warped domain. 
We use the same setup as \citet{Bohm2018}, where the initial condition is obtained as a blend of two states,
\begin{equation}
\state{u}(t=0) = \frac{\state{u}_{\mathrm{inner}}+ \lambda \state{u}_{\mathrm{outer}}}{1+\lambda}, \ \
\lambda = \exp \left[ \frac{5}{\delta_0} \left( r - r_0 \right) \right], \ \
r = \norm{\vec{x} - \vec{x}_c},
\end{equation}
where $\vec{x}_c = (1.5,1.5,1.5)^T$ is the center of the blast, $r_0 = 0.3$ is the distance to the center of the blast, $\delta_0 = 0.1$ is the approximate distance in which the two states are blended, and the inner and outer states are given in Table \ref{tab:statesBlast}.

\begin{table}[htbp!]
	\centering
		\begin{tabular}{c|ccccccccc}
			     & $\rho$ 	& $v_1$	& $v_2$	& $v_3$	& $p$ 	& $B_1$	& $B_2$	& $B_3$	& $\psi$ \\\hline
			$\state{u}_{\mathrm{FSP}}$ & $1.0$	& $0.1$	& $-0.2$	& $0.3$	&$1.0$	& $1.0$	& $1.0$	& $1.0$	& $0.0$ \\
			$\state{u}_{\mathrm{inner}}$ & $1.2$	& $0.1$	& $0.0$	& $0.1$	&$0.9$	& $1.0$	& $1.0$	& $1.0$	& $0.0$ \\
			$\state{u}_{\mathrm{outer}}$ & $1.0$	& $0.2$	& $-0.4$& $0.2$	&$0.3$	& $1.0$	& $1.0$	& $1.0$	& $0.0$ \\\hline 
		\end{tabular}
		\caption{Primitive states for the FSP, entropy conservation and entropy stability tests.}
\label{tab:statesBlast}
\end{table}

In the remaining part of the results section, the modal shock indicator of Section \ref{sec:Indicator} will be used.
For this particular test, we use $\epsilon = p$ as the shock indicator quantity.
The blast triggers the shock-capturing method, as can be seen in Figure \ref{fig:Blast_EC}, which illustrates the pressure and the blending coefficient for the entropy conservation test with $N=6$ at $t=0.5$.
The solution is clearly distorted because the EC flux does not add any dissipation to the numerical scheme.

\begin{figure}[htb]
\centering
\includegraphics[trim=500 0 500 0,clip,width=0.45\linewidth]{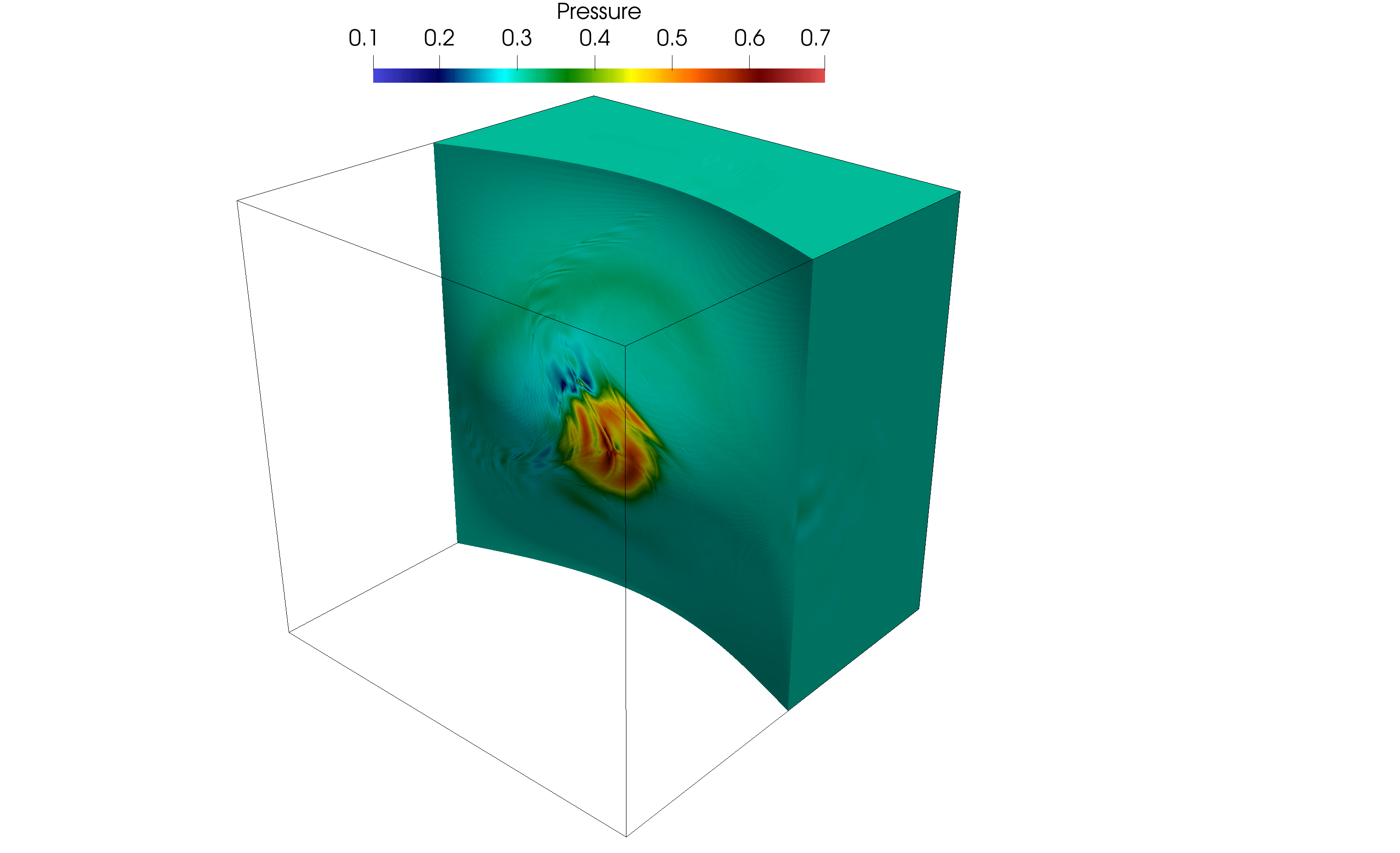}
\includegraphics[trim=500 0 500 0,clip,width=0.45\linewidth]{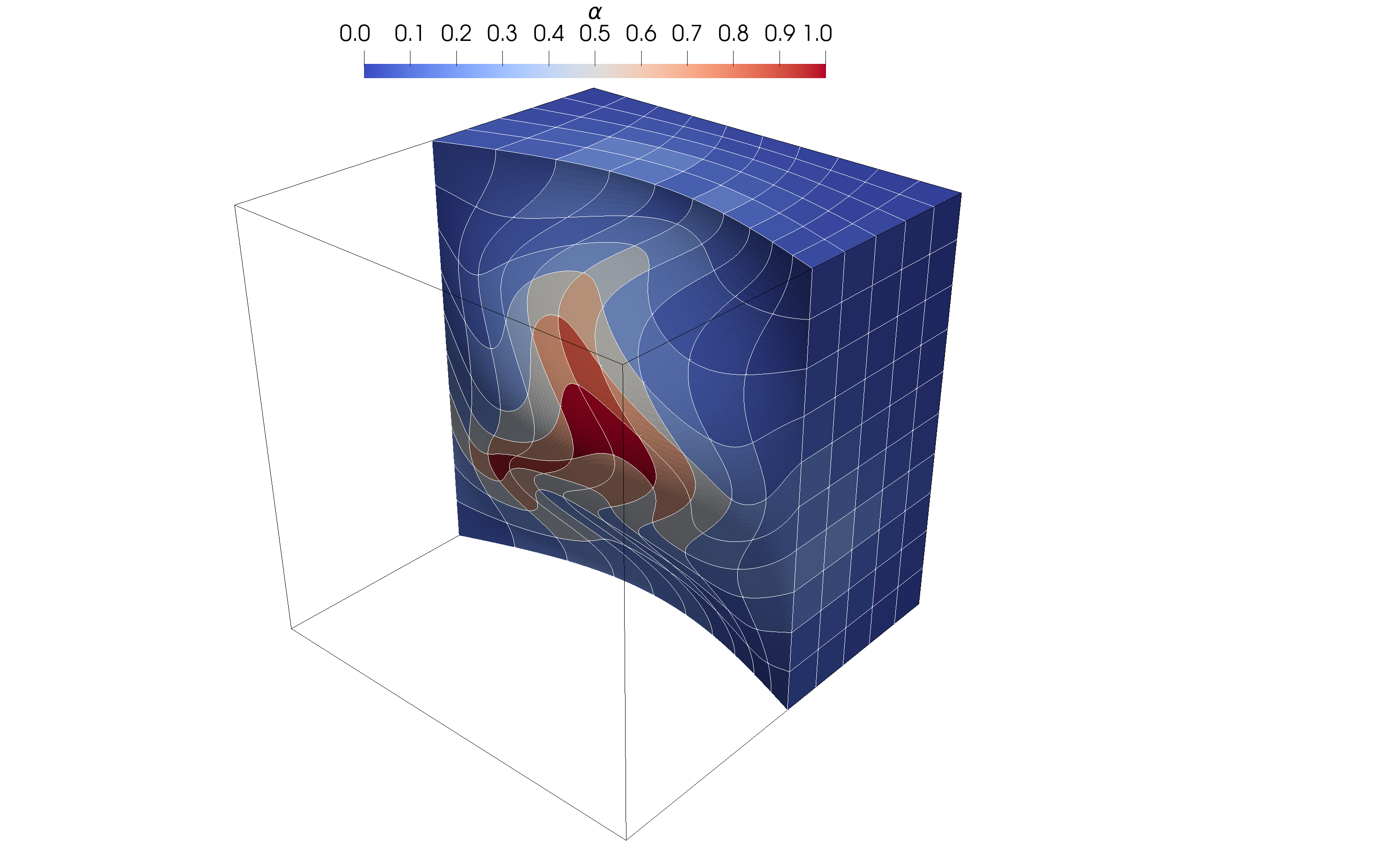}\\

\caption{Pressure and blending coefficient distribution on a slice cut for the entropy conservation test of the soft magnetic blast at $t=0.5$ ($\mathrm{CFL}=0.1$).}
\label{fig:Blast_EC}
\includegraphics[width=0.6\linewidth]{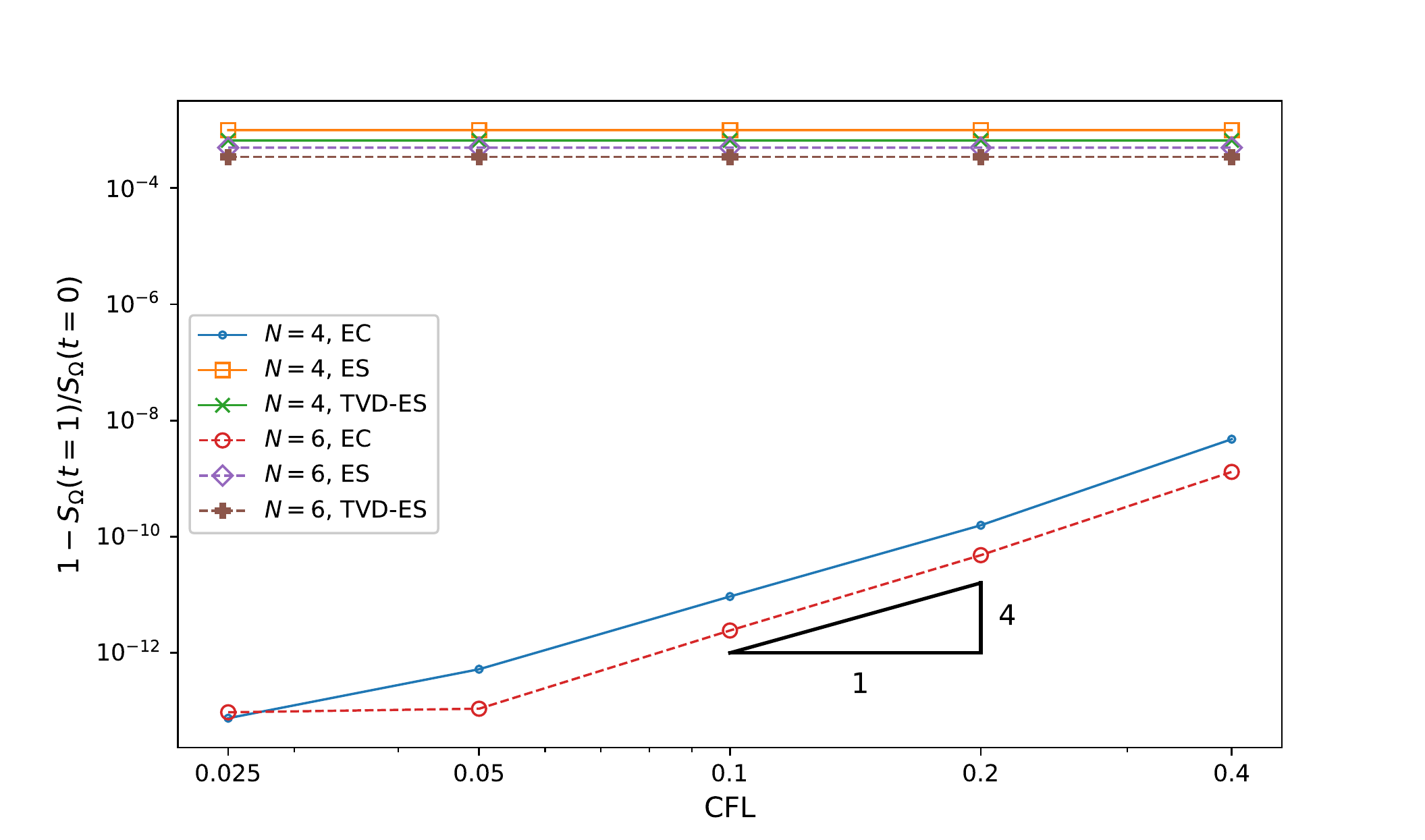}

\caption{Log-log plot of the entropy change from the initial entropy, $S_{\Omega}(t=0)$, to $S_{\Omega}(t=1)$ as a function of the CFL number for the different schemes.}
\label{fig:Blast_Entropy}
\end{figure}

Figure \ref{fig:Blast_Entropy} shows the total entropy change throughout the simulation for the EC, ES and TVD-ES schemes with $N=4$ and $N=6$.
The total entropy in the domain is computed as
\begin{equation}
S_{\Omega} = \int^N_{\Omega} S \d \vec{x}.
\end{equation}
In Section \ref{sec:FirstOrder}, we proved that EC schemes are entropy conservative at the semi-discrete level.
However, the time-integration scheme adds a non-zero entropy dissipation that depends on the time-step size.
As can be seen in Figure \ref{fig:Blast_Entropy}, the entropy dissipation of the EC scheme converges to zero with fourth-order accuracy (down to machine precision) as the time-step size is reduced.
Furthermore, it can be seen that the ES and TVD-ES schemes show entropy stability.

\subsection{Orszag-Tang Vortex}

This 2D case was originally proposed by \citet{orszag1979small} and is widely used to test the robustness of MHD codes \cite{Ciuca2020,Chandrashekar2016,Derigs2018,Stone2008}.
Starting from a smooth initial condition, this case evolves to a complex shock pattern with multiple shock-shock interactions and the transition to supersonic/transonic MHD turbulence.

We use the same setup as in \cite{Ciuca2020,Chandrashekar2016}. 
The simulation domain is $\Omega = [0,1]^2$ with a Cartesian grid and periodic boundary conditions, and the initial condition is set to
\begin{align*}
\rho(x,y,t=0) &= \frac{25}{36 \pi},
&  p(x,y,t=0) &= \frac{5}{12 \pi}, \\
 v_1(x,y,t=0) &= - \sin (2 \pi y), 
&v_2(x,y,t=0) &=   \sin (2 \pi x), \\
 B_1(x,y,t=0) &= -\frac{1}{\sqrt{4 \pi}} \sin (2 \pi y), 
&B_2(x,y,t=0) &= -\frac{1}{\sqrt{4 \pi}} \sin (4 \pi x),
\end{align*}
which fulfills $\Nabla \cdot \vec{B} = 0$ and gives a sound speed $a=1$.

\begin{figure}[htb]
\centering
\includegraphics[trim=600 1400 600 50,clip,width=0.5\linewidth]{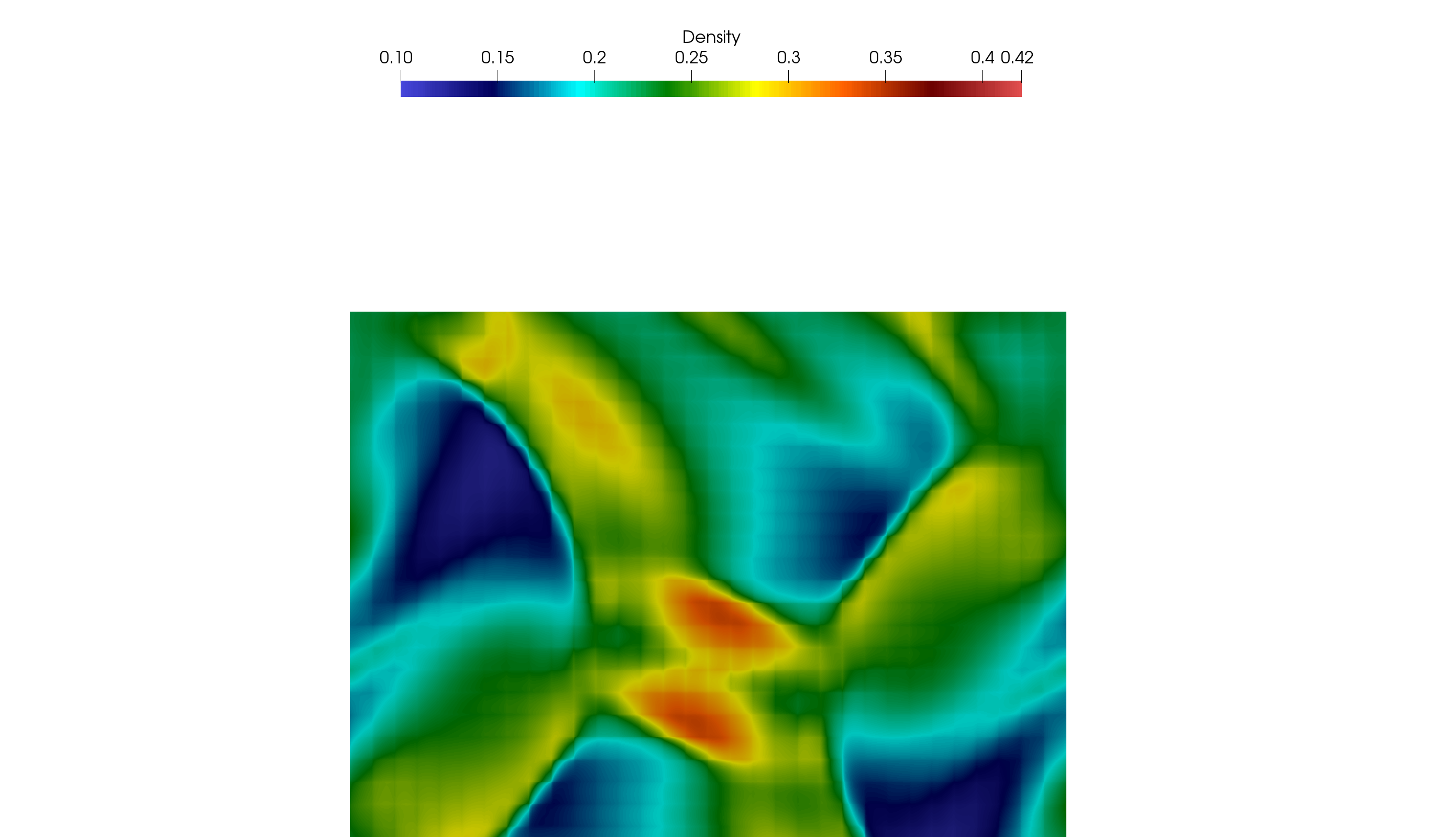}\\
\begin{subfigure}[b]{0.32\linewidth}
	\includegraphics[trim=700 100 700 100,clip,width=\linewidth]{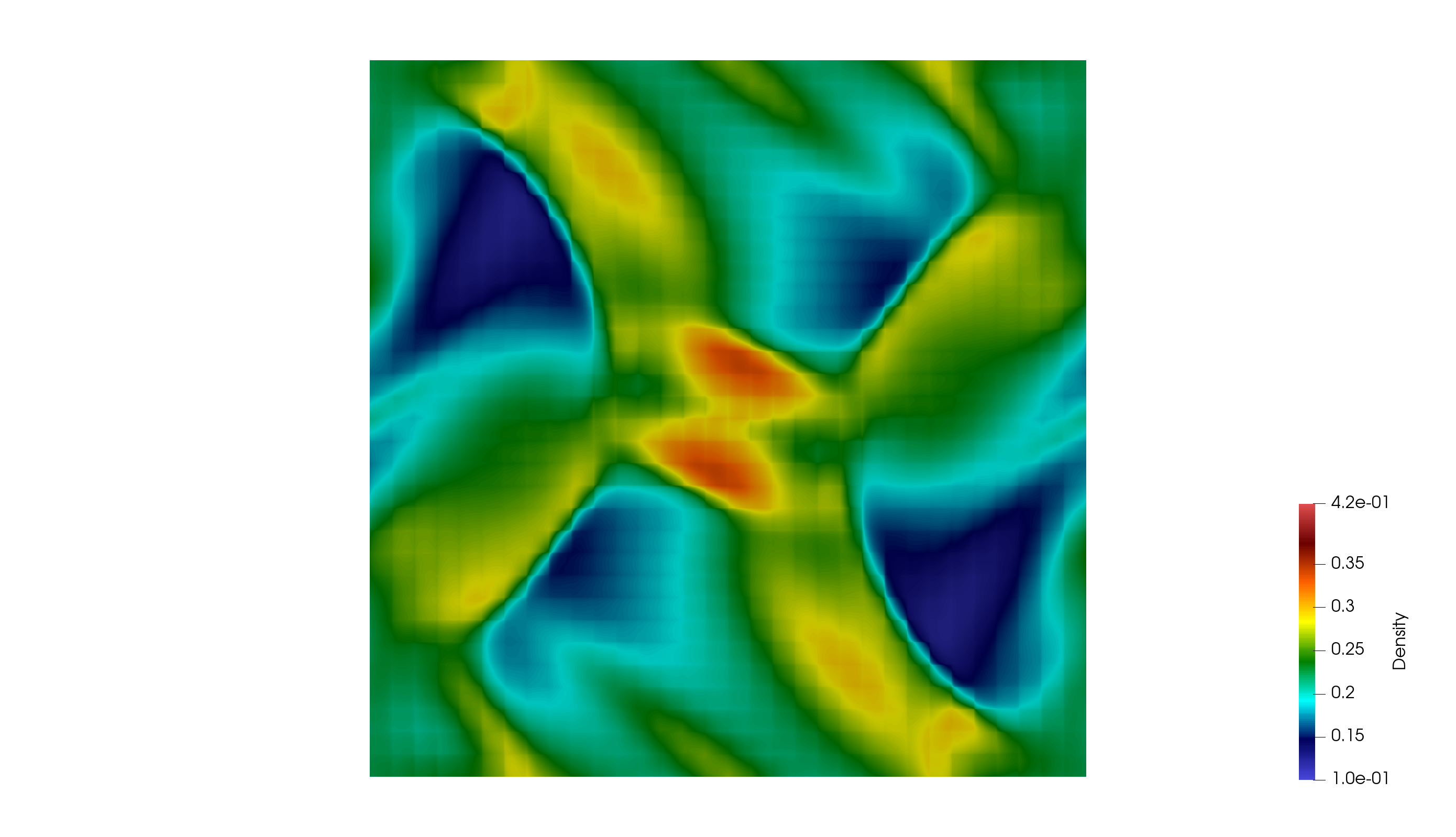}
	\caption{First-order FV}
\end{subfigure}
\begin{subfigure}[b]{0.32\linewidth}
	\includegraphics[trim=700 100 700 100,clip,width=\linewidth]{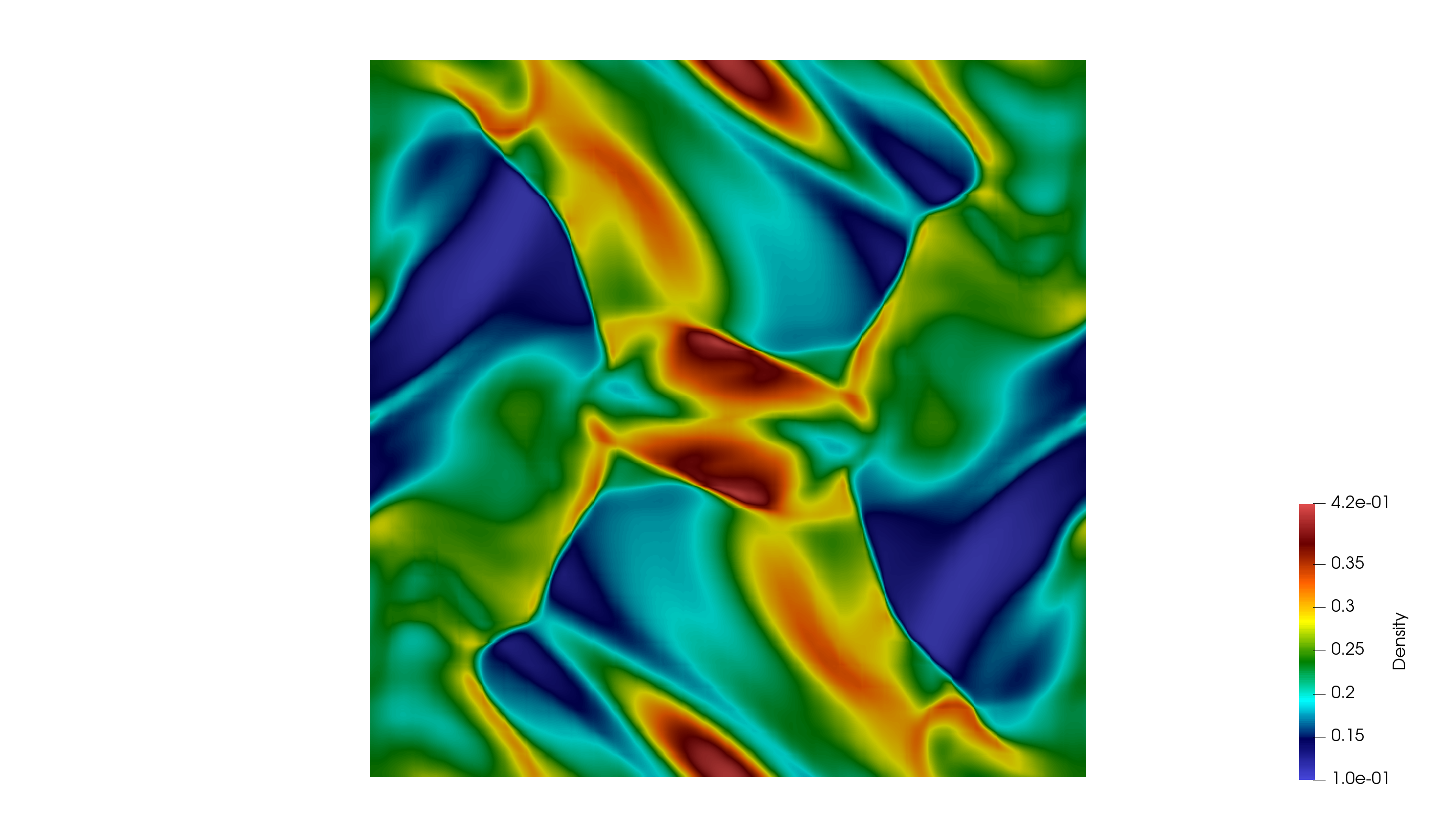}
	\caption{TVD-ES (no boundary reconstruction)}
\end{subfigure}
\\
\begin{subfigure}[b]{0.32\linewidth}
	\includegraphics[trim=700 100 700 100,clip,width=\linewidth]{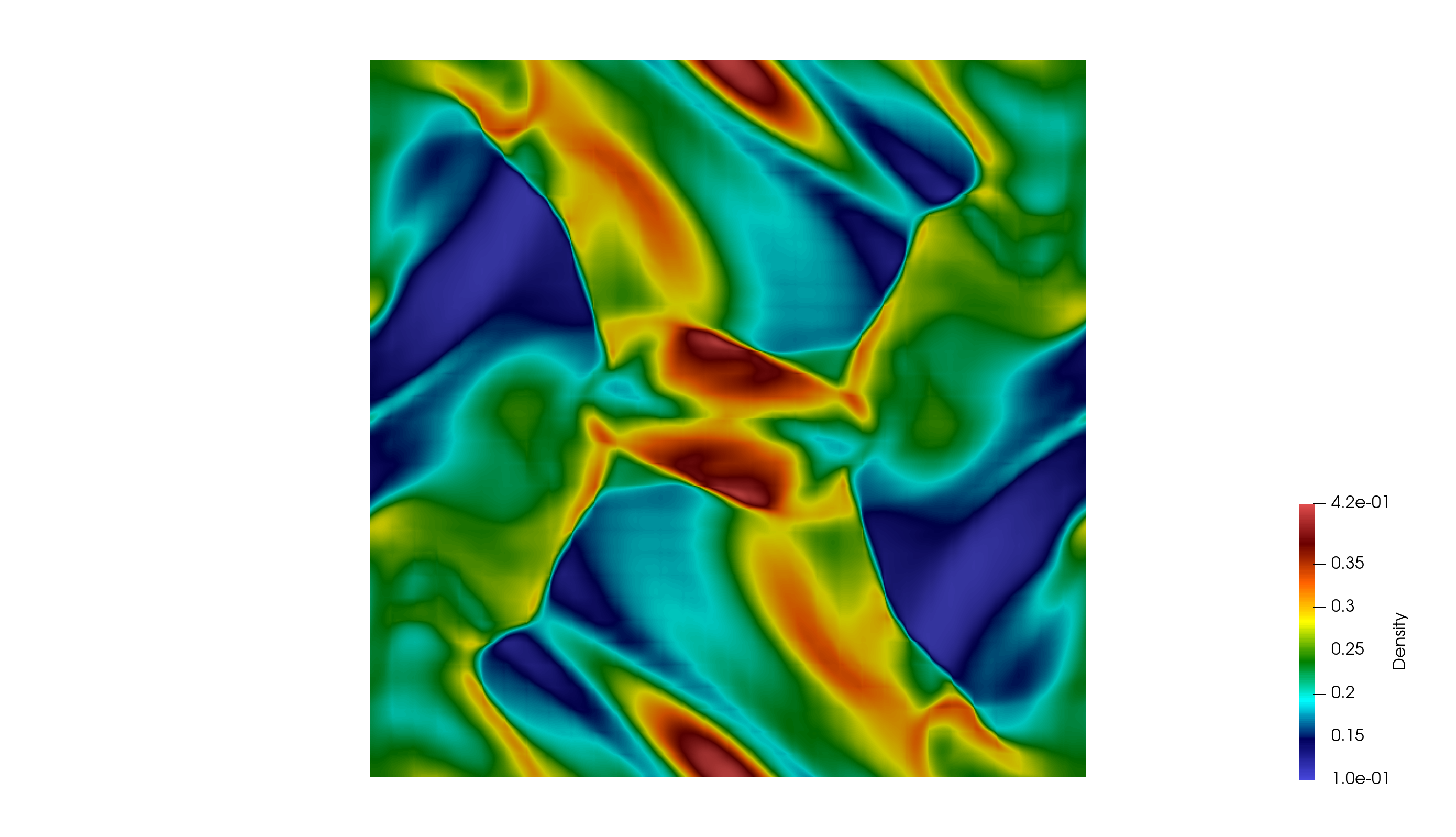}
	\caption{TVD-ES (central reconstruction)}
\end{subfigure}
\begin{subfigure}[b]{0.32\linewidth}
	\includegraphics[trim=700 100 700 100,clip,width=\linewidth]{figs/02_Orszag-Tang/DOF0256N7_t_0-50_Density_Alpha1_FjodES_neighborRecons.png}
	\caption{TVD-ES (neighbor reconstruction)}
\end{subfigure}
\caption{Density patterns at $t=0.5$ obtained with $256^2$ degrees of freedom and $N=7$ using the pure FV first-order method and different variants of the pure FV TVD-ES method. To obtain a pure FV discretization, we set $\alpha = 1$.}
\label{fig:OT_DensityFirstVsTVD}
\end{figure}

We solve this problem until $t=1$ with $256^2$, $512^2$ and $1024^2$ degrees of freedom, with the polynomial degrees $N=3$ and $N=7$, using the first-order and the TVD-ES shock-capturing methods introduced in Sections \ref{sec:HennemannGoesMHD} and \ref{sec:TVD-ES}, respectively.

We first study how both shock capturing methods perform in the pure FV limit, i.e. $\alpha=1$.
Figure \ref{fig:OT_DensityFirstVsTVD} shows a comparison of the the pure FV first-order method with different variants of the pure FV TVD-ES: (b) the scheme that does not use a reconstruction on the subcell boundaries \eqref{eq:TVDrecons_NoRecons}, (c) the scheme that uses a central reconstruction on the subcell boundaries \eqref{eq:TVDrecons_Central}, and (c) the scheme that uses the neighbor elements' state to reconstruct the solution on the subcell boundaries \eqref{eq:TVDrecons_Bound}.
For this test, we use $256^2$ DOFs, $N=7$.
The TVD-ES method provides an increased resolution and a reduction of the artifacts that the first-order FV method causes.
Moreover, it is worth pointing out that the scheme that does not use a boundary reconstruction procedure delivers the best results.
For this reason, we use the TVD-ES method without boundary reconstruction in the remaining parts of the results section.

\begin{figure}[htb!]
	\centering
	\includegraphics[trim=450 0 450 1050,clip,width=0.36\linewidth]{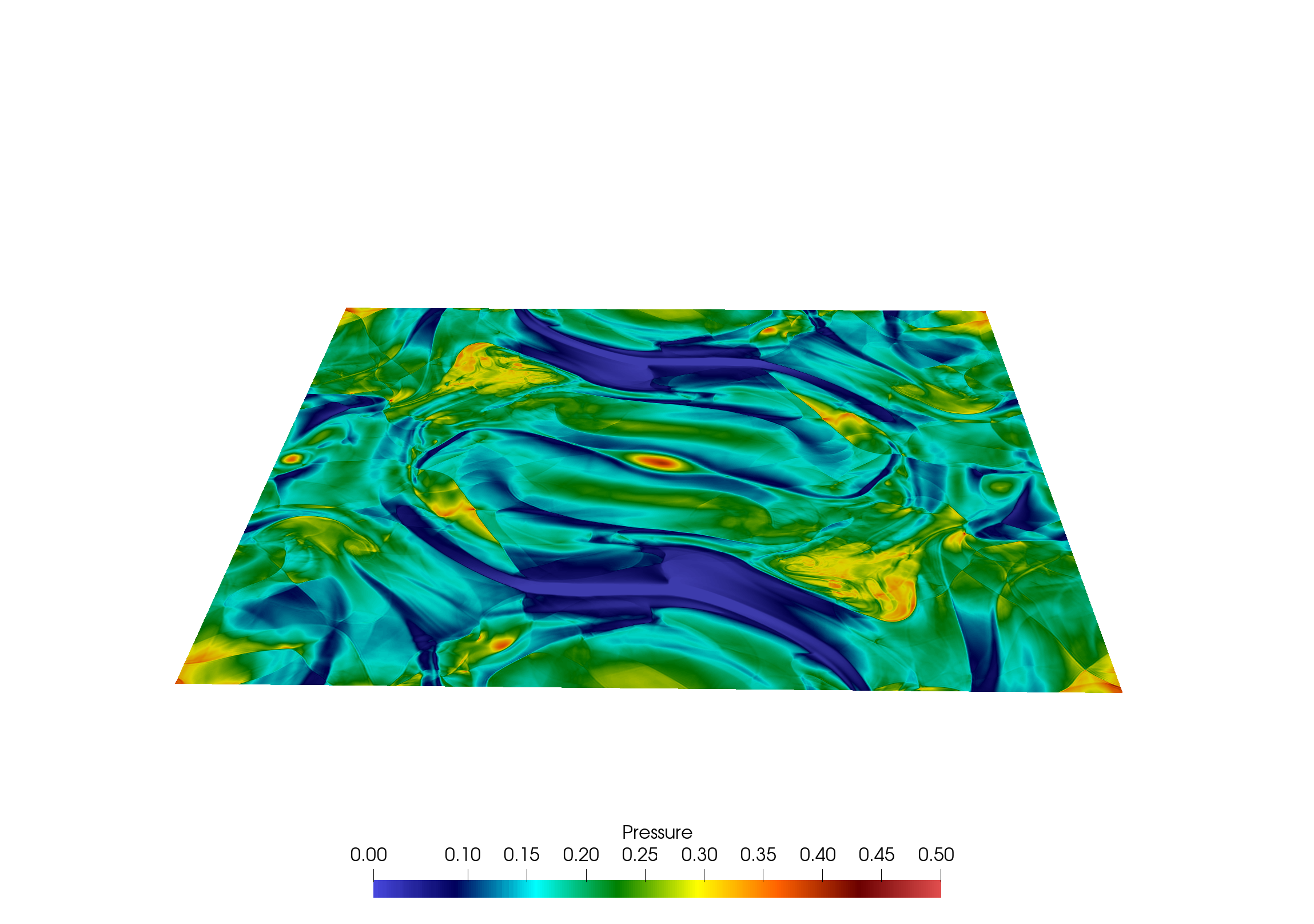}
	\includegraphics[trim=600 0 600 1400,clip,width=0.36\linewidth]{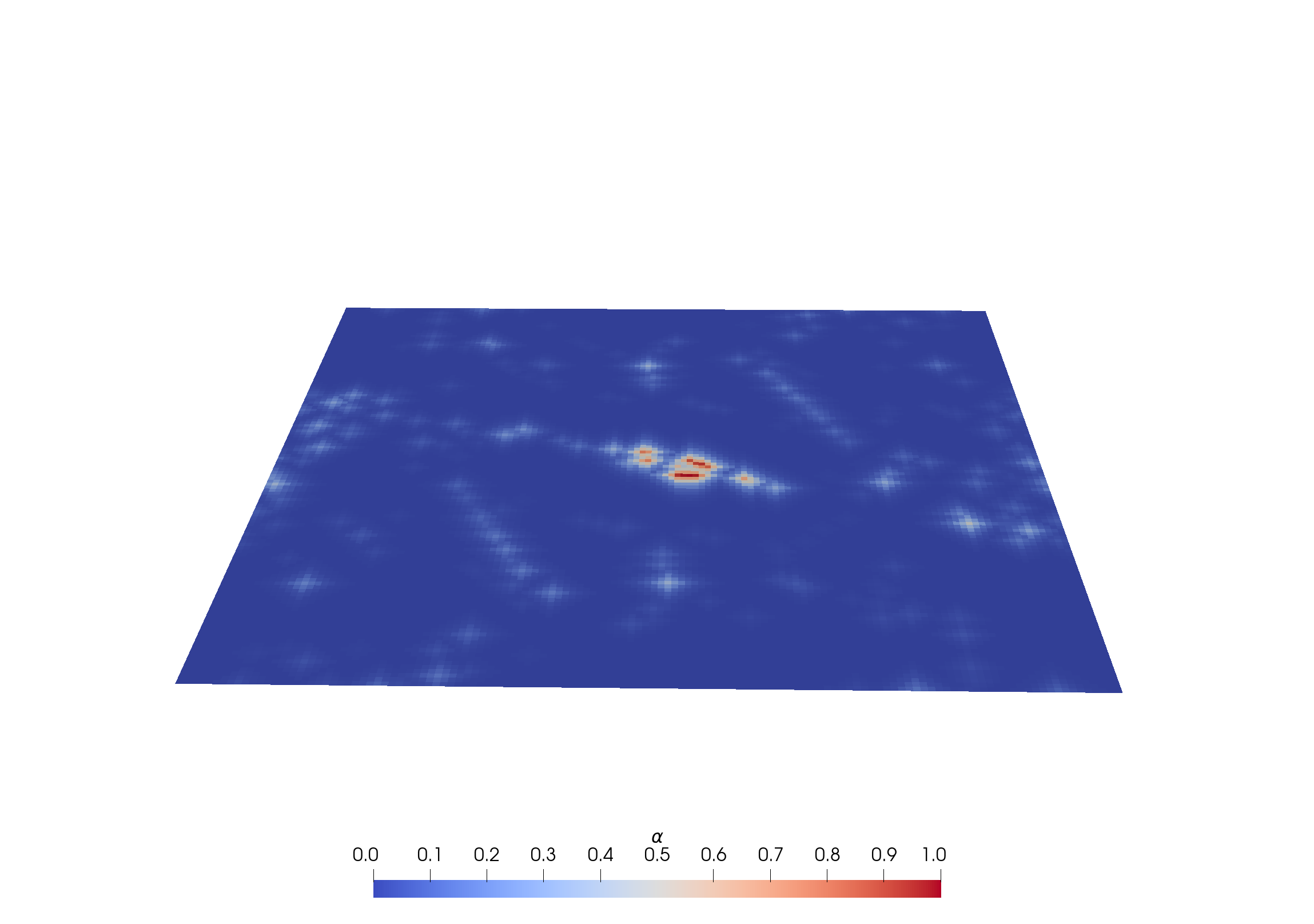}
		\includegraphics[trim=700 100 700 100 ,clip,width=0.36\linewidth]{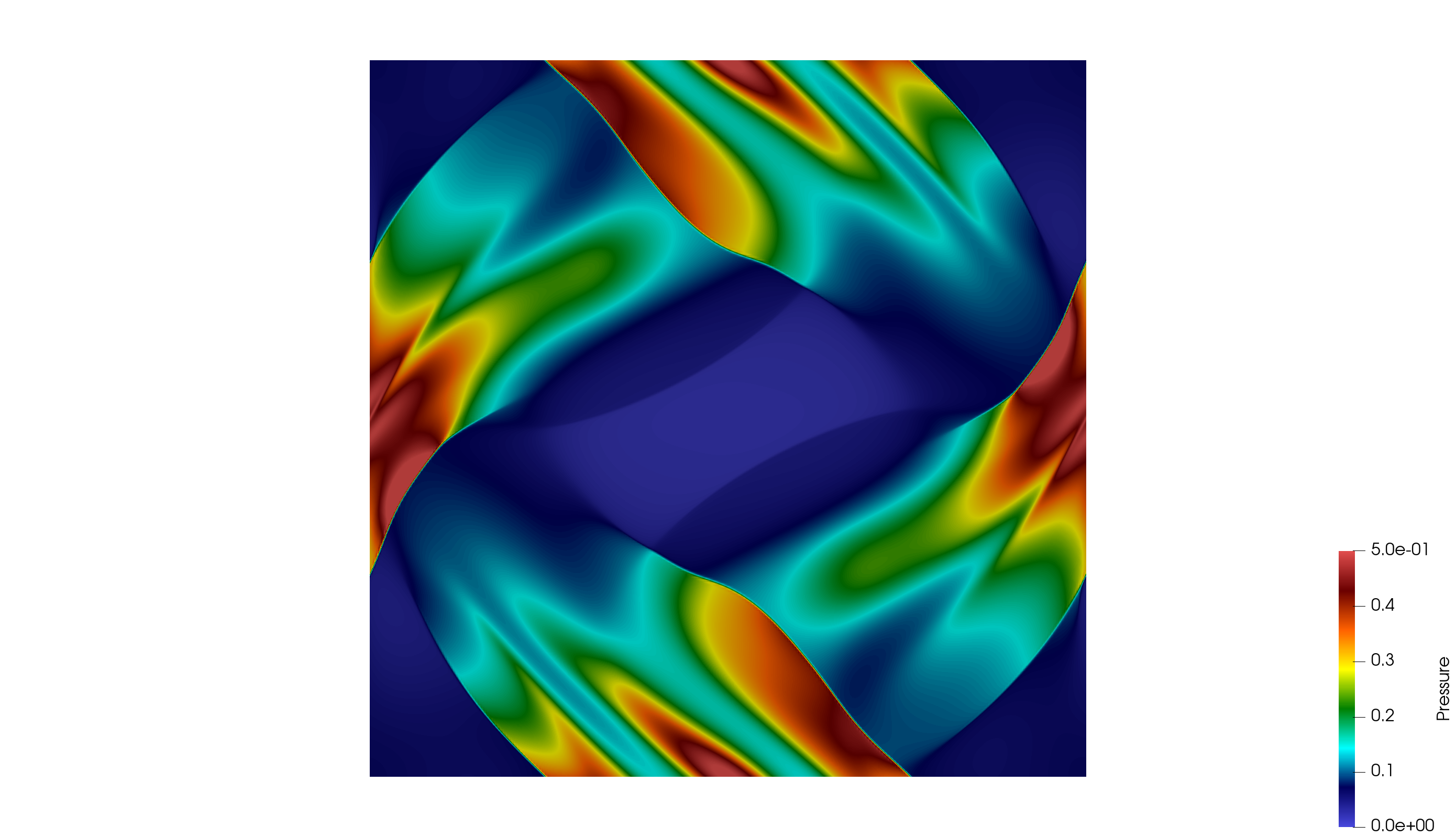}
		\includegraphics[trim=700 100 700 100,clip,width=0.36\linewidth]{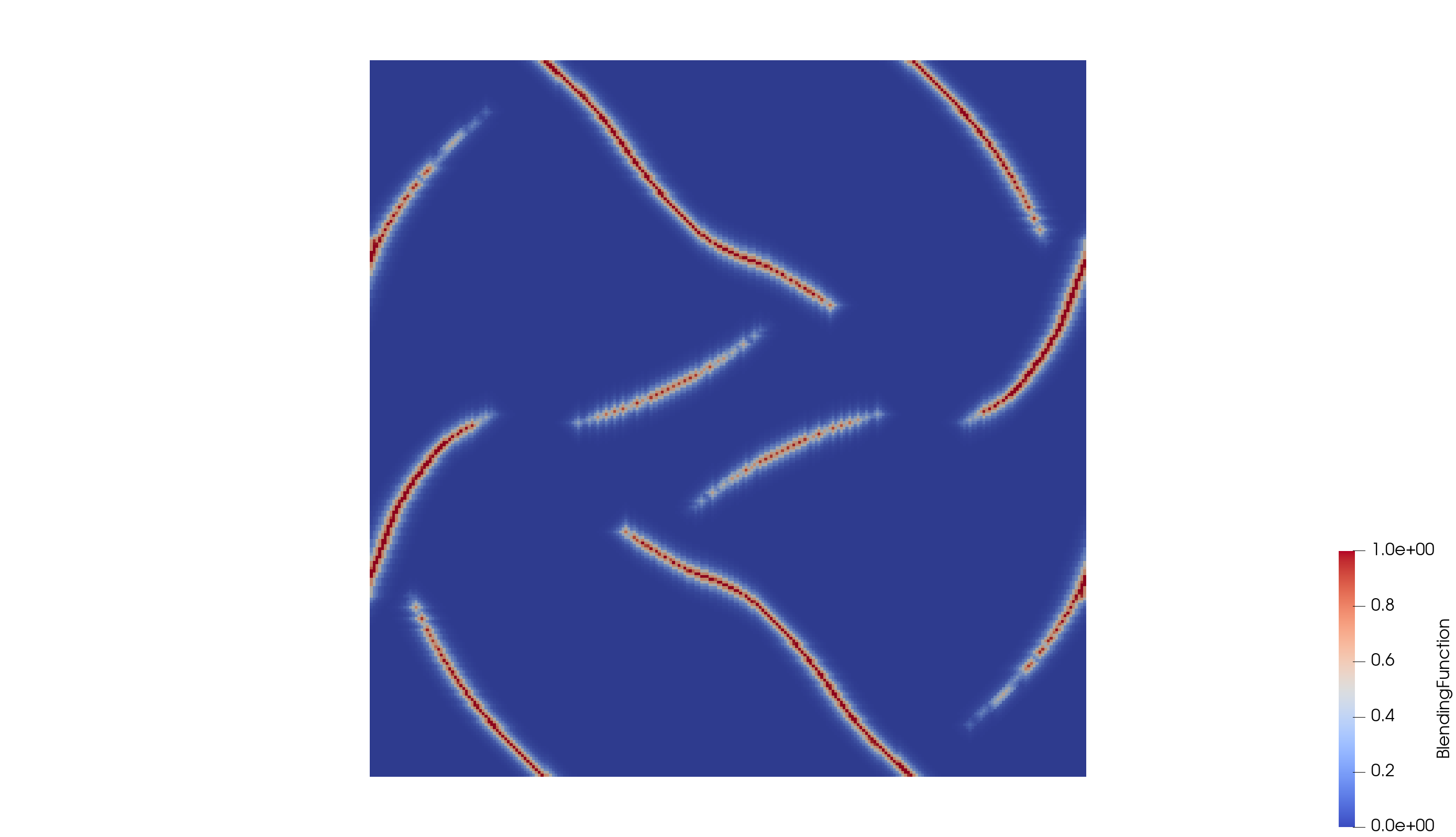}
		\includegraphics[trim=700 100 700 100,clip,width=0.36\linewidth]{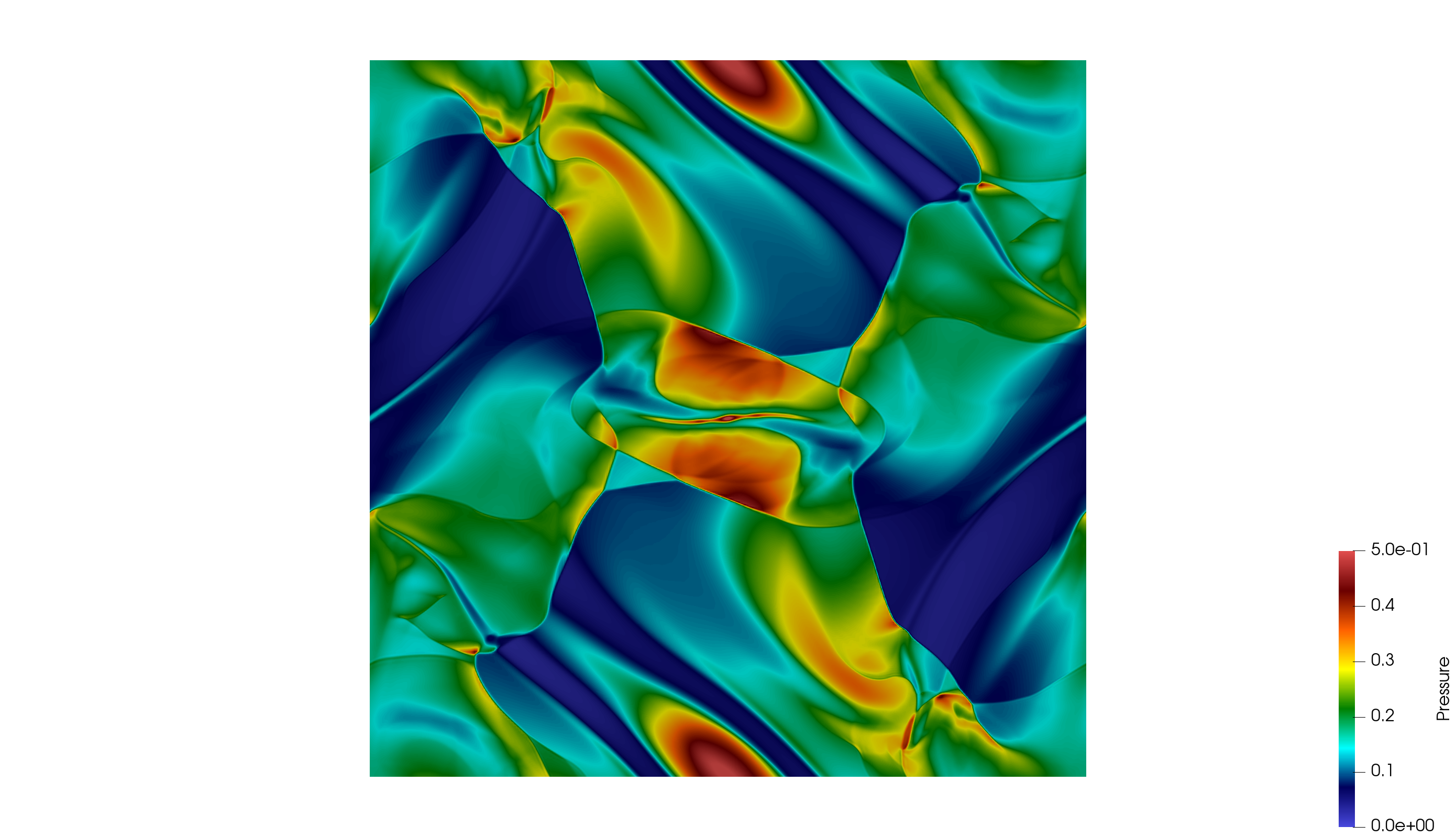}
		\includegraphics[trim=700 100 700 100,clip,width=0.36\linewidth]{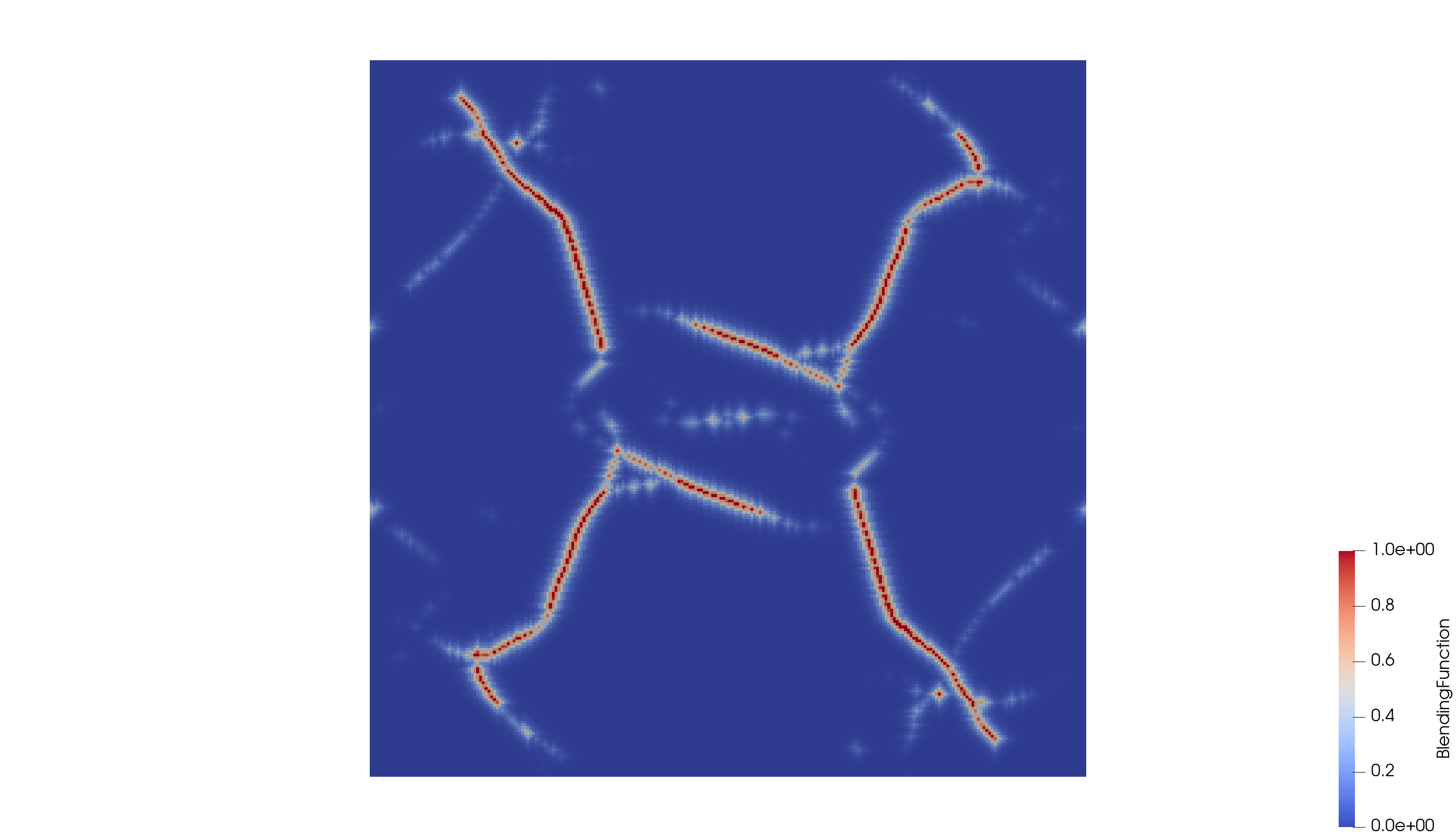}
		\includegraphics[trim=700 100 700 100,clip,width=0.36\linewidth]{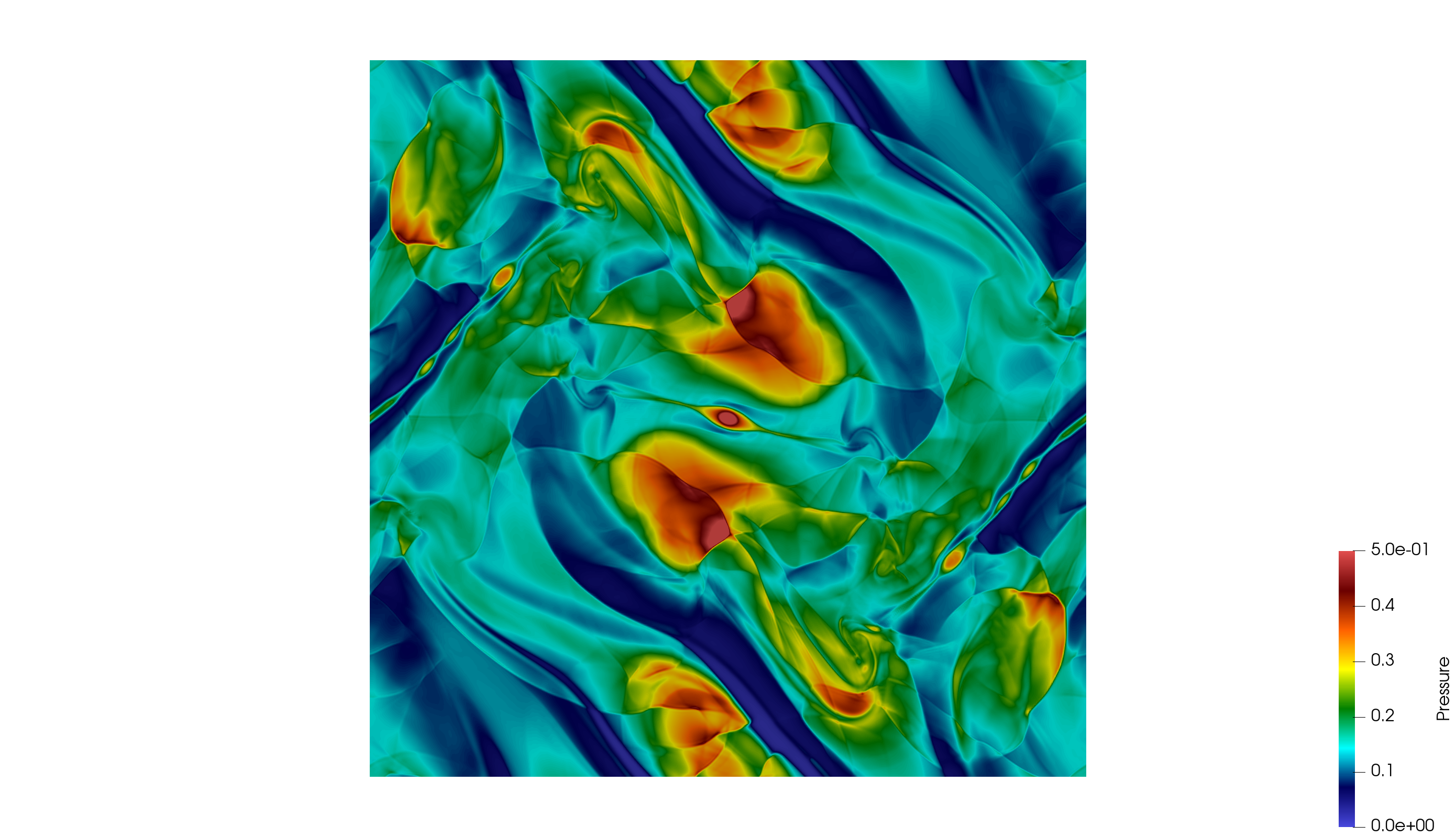}
		\includegraphics[trim=700 100 700 100,clip,width=0.36\linewidth]{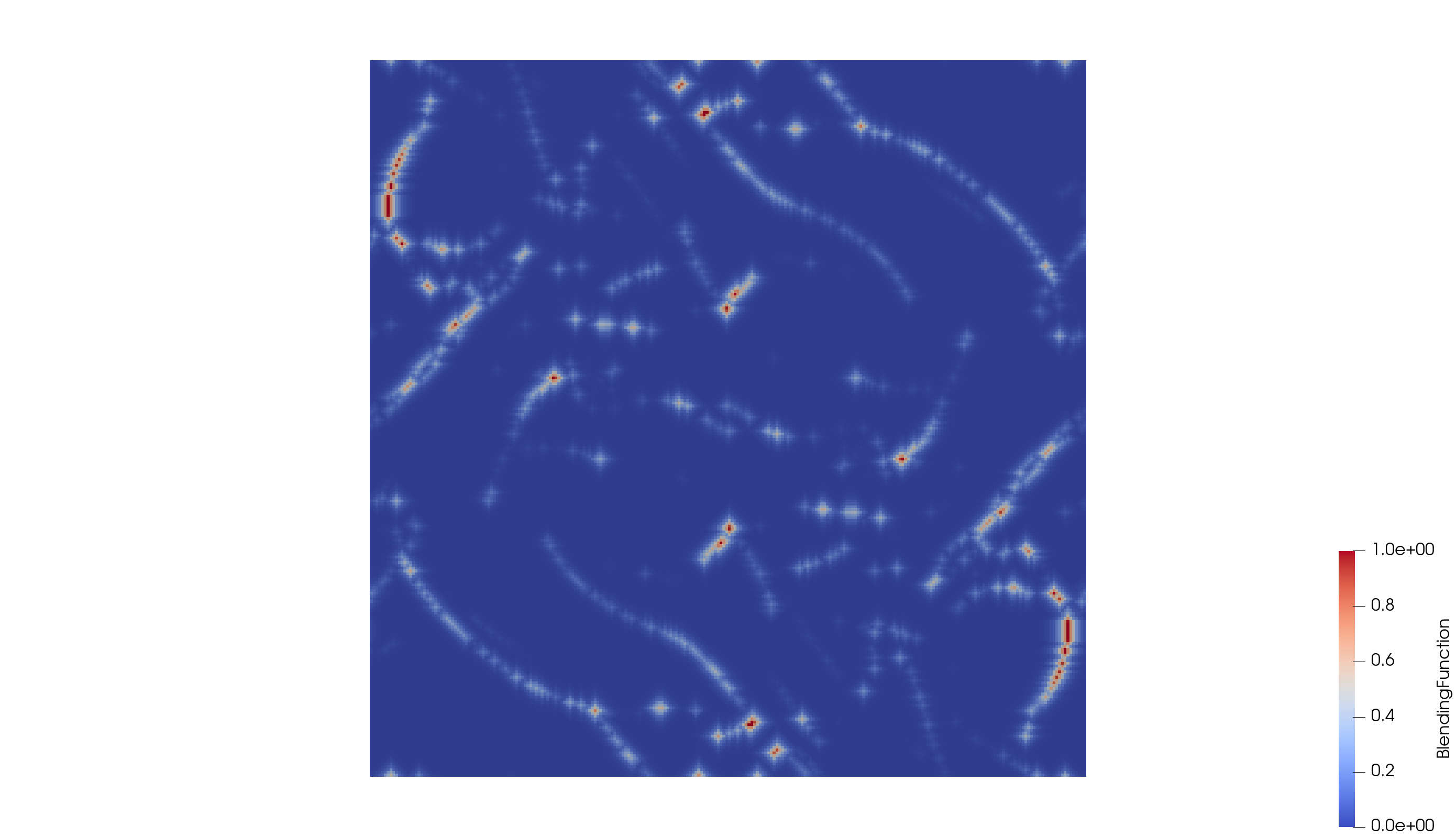}
	\caption{Evolution of the Orszag-Tang vortex problem with the TVD-ES shock capturing method. 
	We show the pressure and the blending coefficient for the simulation with $1024^2$ degrees of freedom and $N=3$ for $t=0.25$ (top), $t=0.50$ (middle), $t=0.75$ (bottom).
	The blending coefficient is computed with the indicator described in Section \ref{sec:Indicator} using the gas pressure as indicator quantity, $\epsilon=p$.}
	\label{fig:OT1024_N3}
\end{figure}

We now study the performance of the scheme that blends the subcell FV method with our high-order DGSEM discretization.
To do that, we use the shock indicator described in Section \ref{sec:Indicator} with the gas pressure as indicator quantity, $\epsilon=p$, as it showed to provide enough robustness for the simulations of this test.

Figure \ref{fig:OT1024_N3} shows the evolution of the pressure and the blending coefficient, $\alpha$, for the simulation that uses the TVD-ES method with $1024^2$ DOFs and $N=3$.
The initially smooth solution quickly develops shocks that travel freely through the domain, as can be seen at $t=0.25$.
The shock indicator locates the presence of shocks and applies an appropriate amount of localized stabilization.
As the shocks meet the periodic boundaries, they re-enter the domain and start to interact with each other, as is evident at $t=0.5$.
As the simulation advances, the shock-shock interaction produces zones with increased vorticity and mixing, which lead to the transition to MHD turbulence.

\begin{figure}[htb!]
	\centering
	\includegraphics[trim=450 0 450 1050,clip,width=0.36\linewidth]{figs/02_Orszag-Tang/Legend_Pressure.png}
	\includegraphics[trim=600 0 600 1400,clip,width=0.36\linewidth]{figs/02_Orszag-Tang/Legend_Alpha.png}
		\includegraphics[trim=700 100 700 100 ,clip,width=0.36\linewidth]{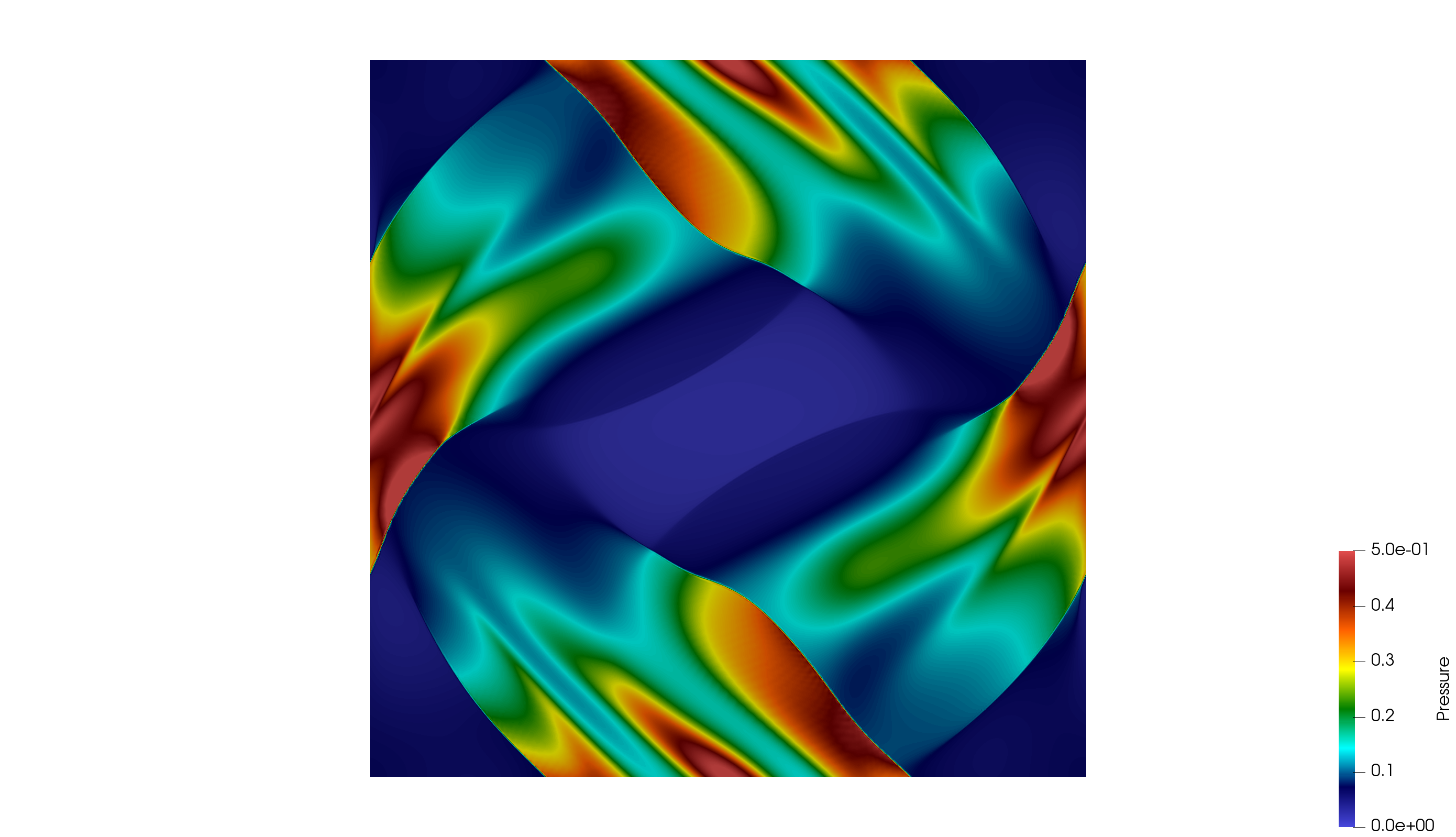}
		\includegraphics[trim=700 100 700 100 ,clip,width=0.36\linewidth]{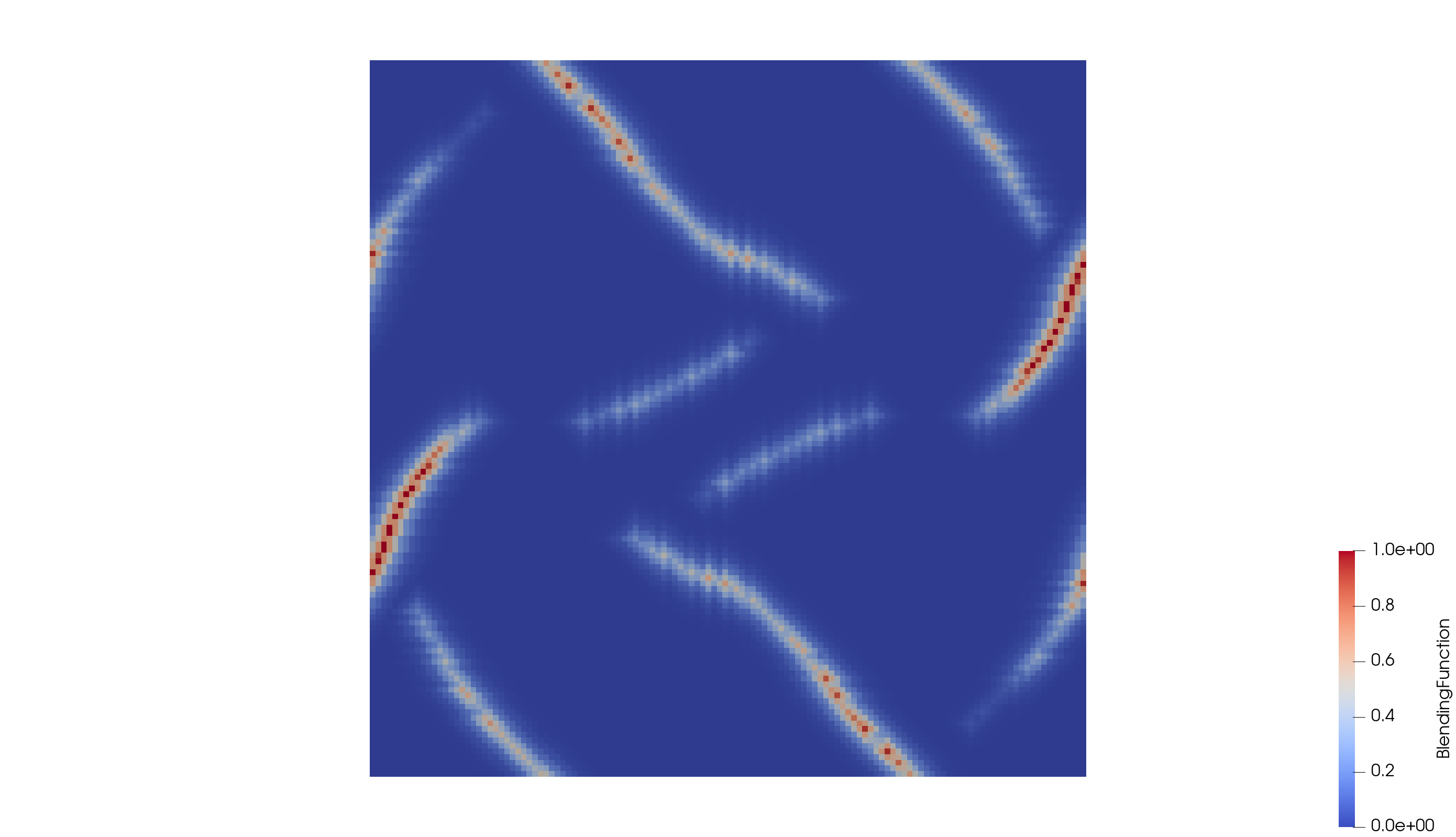}
		\includegraphics[trim=700 100 700 100 ,clip,width=0.36\linewidth]{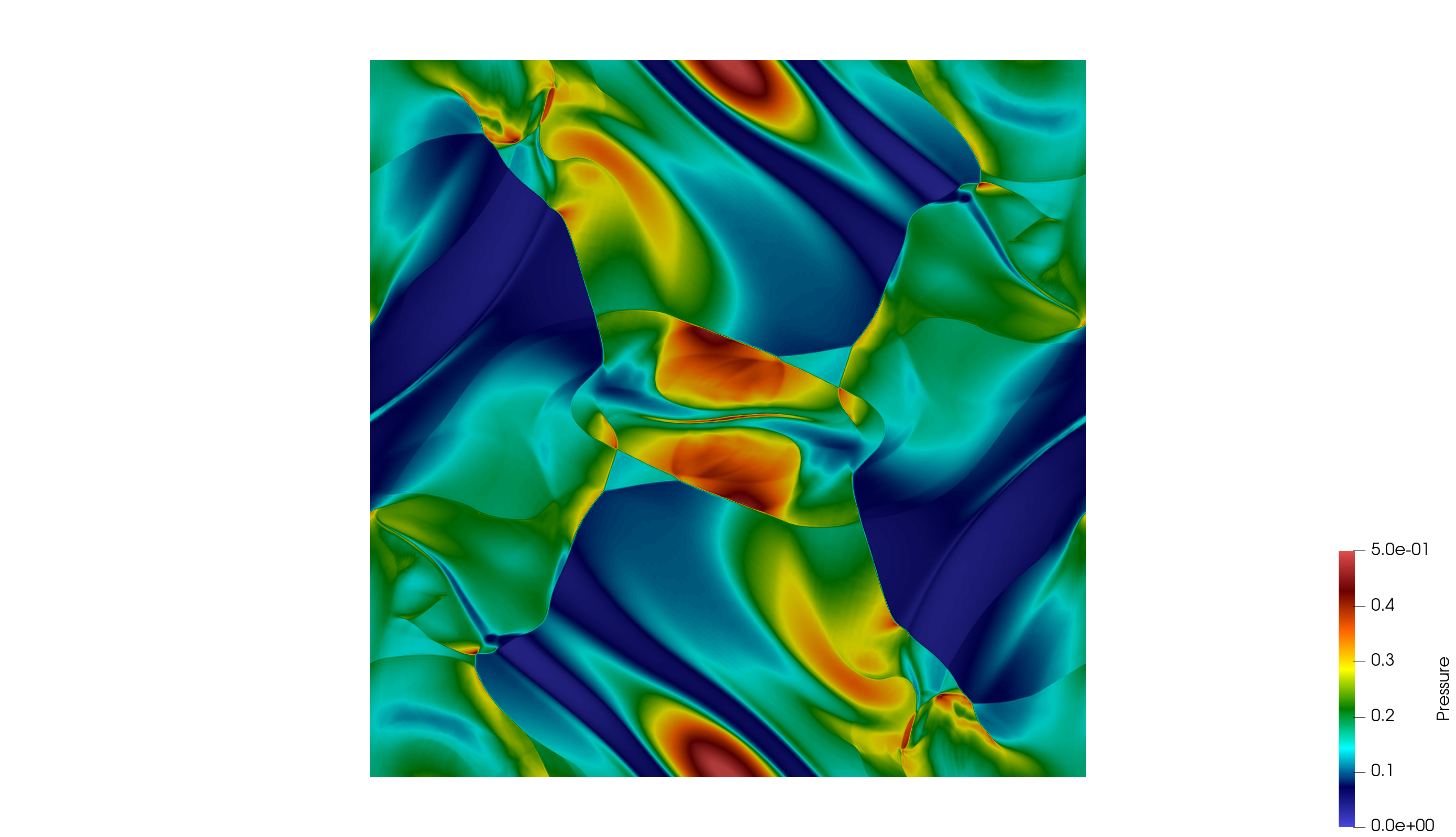}
		\includegraphics[trim=700 100 700 100 ,clip,width=0.36\linewidth]{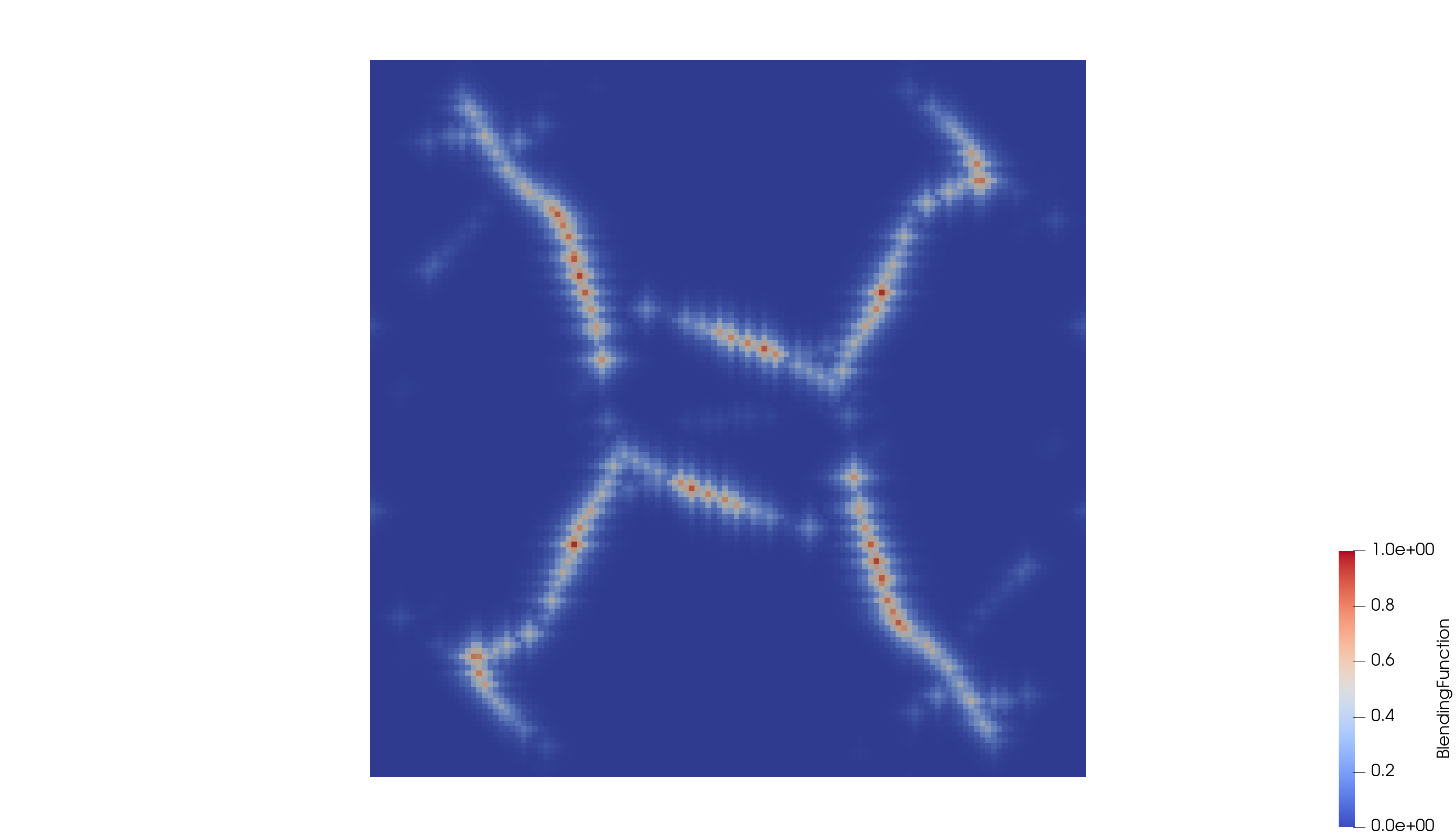}
		\includegraphics[trim=700 100 700 100 ,clip,width=0.36\linewidth]{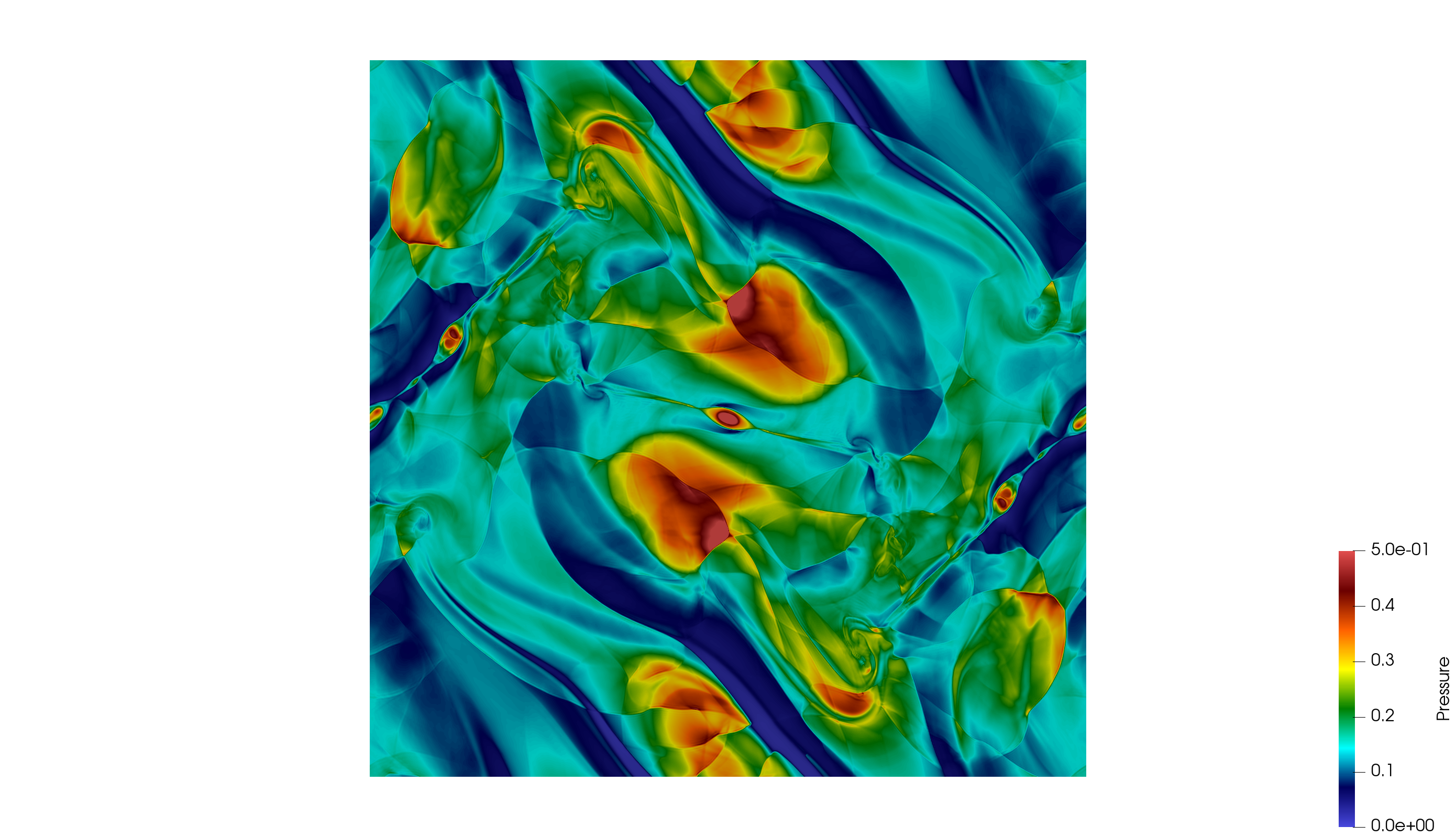}
		\includegraphics[trim=700 100 700 100 ,clip,width=0.36\linewidth]{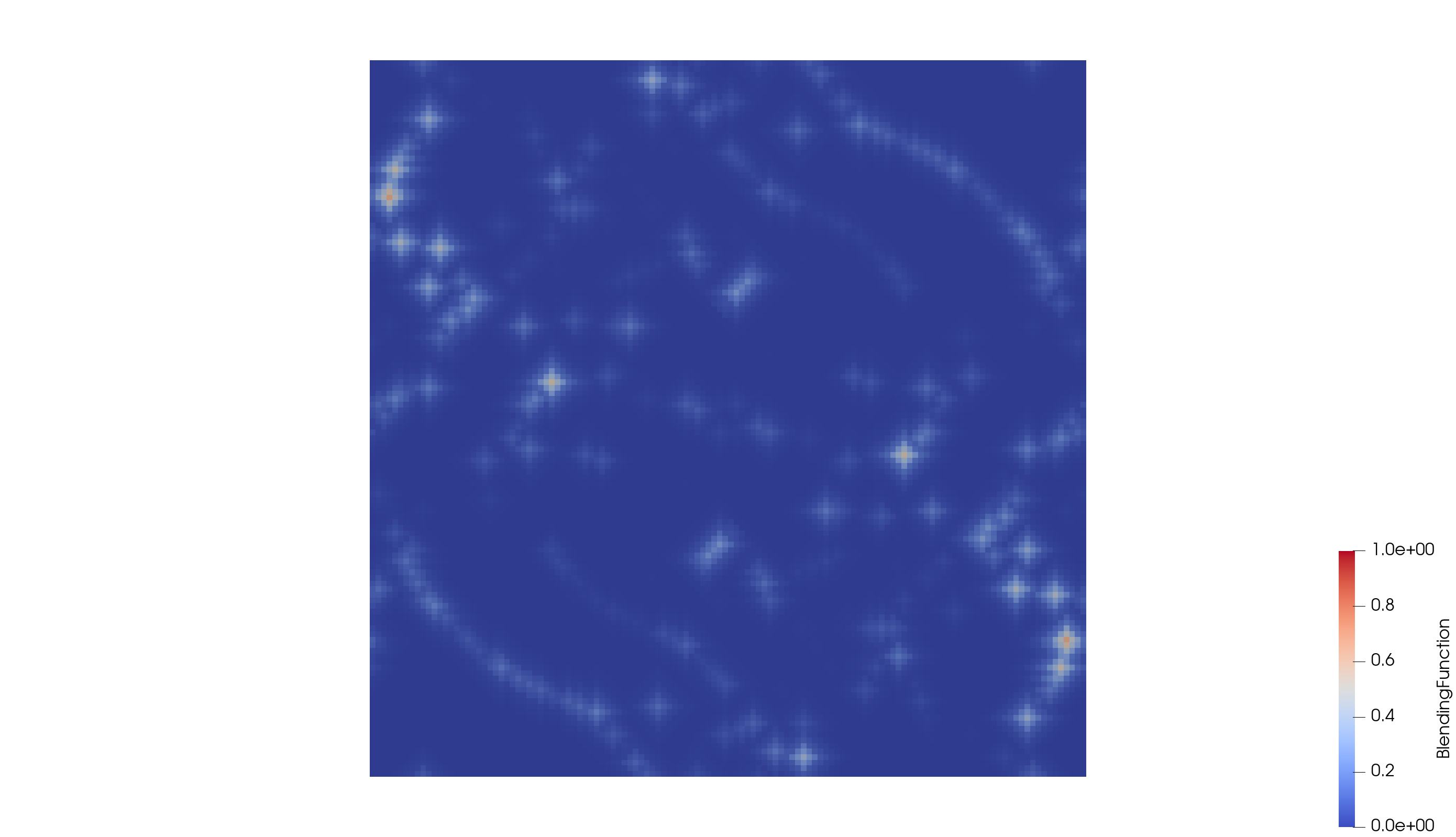}
	\caption{Evolution of the Orszag-Tang vortex problem with the TVD-ES shock capturing method. 
	We show the pressure and the blending coefficient for the simulation with $1024^2$ degrees of freedom and $N=7$ for $t=0.25$ (top), $t=0.50$ (middle), $t=0.75$ (bottom).
	The blending coefficient is computed with the indicator described in Section \ref{sec:Indicator} using the gas pressure as indicator quantity, $\epsilon=p$.}
	\label{fig:OT1024_N7}
\end{figure}

Figure \ref{fig:OT1024_N7} shows the evolution of the pressure and the blending coefficient, $\alpha$, for the simulation that uses the TVD-ES method with $1024^2$ DOFs and $N=7$.
As in Figure \ref{fig:OT1024_N3}, the shocks are correctly identified by the indicator, and a proportionate amount of stabilization is applied.
Note that the value of the blending coefficient, $\alpha$ is in general lower than in the $N=3$ case, as the polynomials of degree $N=7$ can better represent steep gradients.
It can be clearly seen that in the $N=7$ simulation, more and smaller vortices appear at $t=0.75$ than in the $N=3$ case.
This shows the advantage of using a high polynomial degree.

\begin{figure}[htb]
\centering
\begin{subfigure}[b]{0.9\linewidth}
	\includegraphics[width=\linewidth]{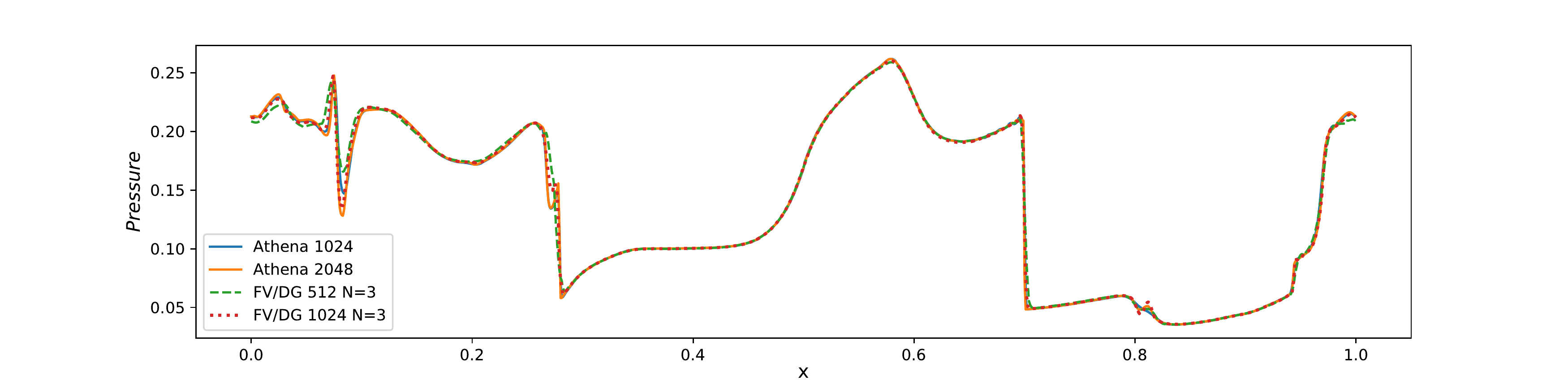}
	\caption{$y=0.3125$}
\end{subfigure}
\begin{subfigure}[b]{0.9\linewidth}
	\includegraphics[width=\linewidth]{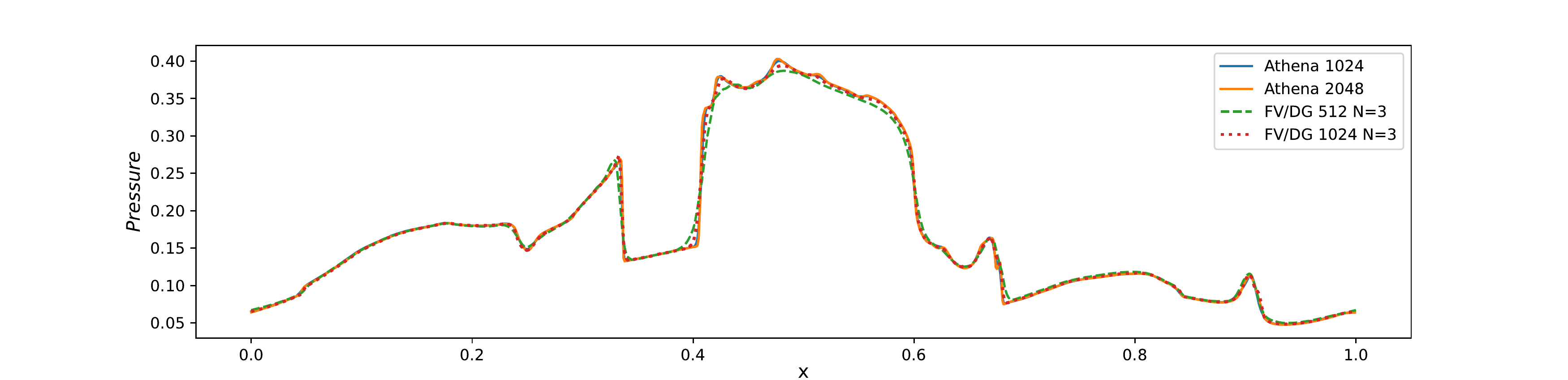}
	\caption{$y=0.4277$}
\end{subfigure}
\caption{Slices of dimensionless pressure for the Orszag-Tang vortex at $t=0.5$ for different resolutions ($N=3$) and comparison with the Athena solver.
The legend shows the number of degrees of freedom per direction.}
\label{fig:OT_DataLinesN3}
%
\begin{subfigure}[b]{0.9\linewidth}
	\includegraphics[width=\linewidth]{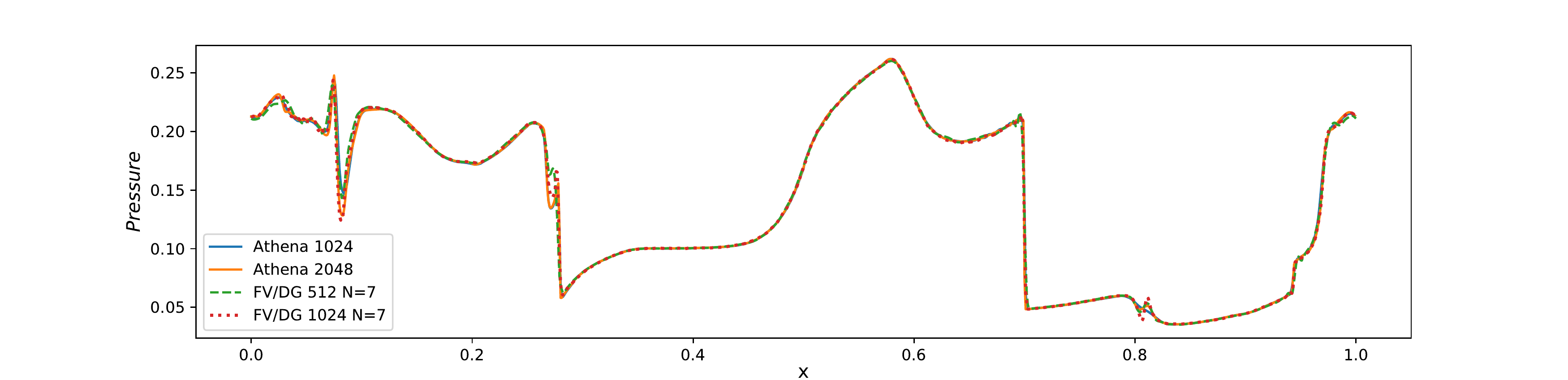}
	\caption{$y=0.3125$}
\end{subfigure}
\begin{subfigure}[b]{0.9\linewidth}
	\includegraphics[width=\linewidth]{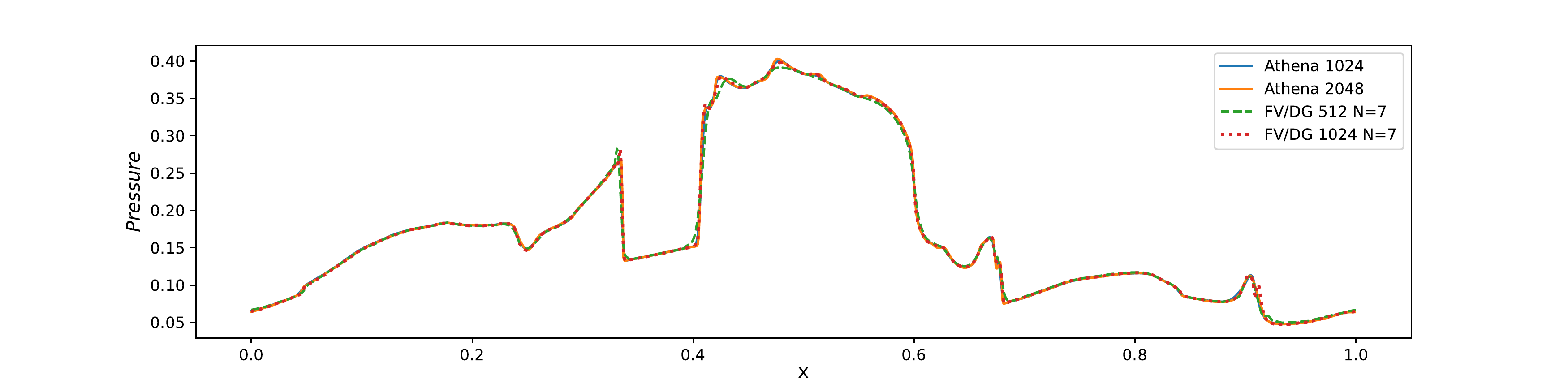}
	\caption{$y=0.4277$}
\end{subfigure}
\caption{Slices of dimensionless pressure for the Orszag-Tang vortex at $t=0.5$ for different resolutions ($N=7$) and comparison with the Athena solver.
The legend shows the number of degrees of freedom per direction.}
\label{fig:OT_DataLinesN7}
\end{figure}

In Figures \ref{fig:OT_DataLinesN3} and  \ref{fig:OT_DataLinesN7}, we show the pressure along two slice cuts, at $y=0.3125$ and $y=0.4277$, for different resolutions that were tested with the TVD-ES shock-capturing method.
In both figures, we also show the pressure on the slice cuts that is obtained with the Athena code\footnote{We used Athena public version available at \url{https://github.com/PrincetonUniversity/athena-public-version}.} \cite{Stone2008} using $1024^2$ and $2048^2$ DOFs and a Finite Volume method with a second order reconstruction procedure on a uniform grid.
Athena is an open-source solver that uses the constrained transport technique to ensure the divergence-free condition on the magnetic field.
In this particular example, we used the Rusanov Riemann solver that is implemented in Athena, such that the results are comparable.

In Figures \ref{fig:OT_DataLinesN3} and  \ref{fig:OT_DataLinesN7}, we can observe that our method converges to the Athena solution as the resolution is increased and that some features are better captured when using higher-order DGSEM method with $N=7$.

%

Finally, in Figure \ref{fig:OT_Entropy}, we show the evolution of the total mathematical entropy, $S_{\Omega}$, over time for the Orszag-Tang vortex test.
As expected, the schemes fulfill the second law of thermodynamics.
Since this case has no dissipation through viscosity/resistivity, the mathematical entropy stays constant at the beginning, when the solution is smooth, and then decreases, when shocks appear in the domain.

\begin{figure}[htb]
\centering
\includegraphics[width=0.6\linewidth]{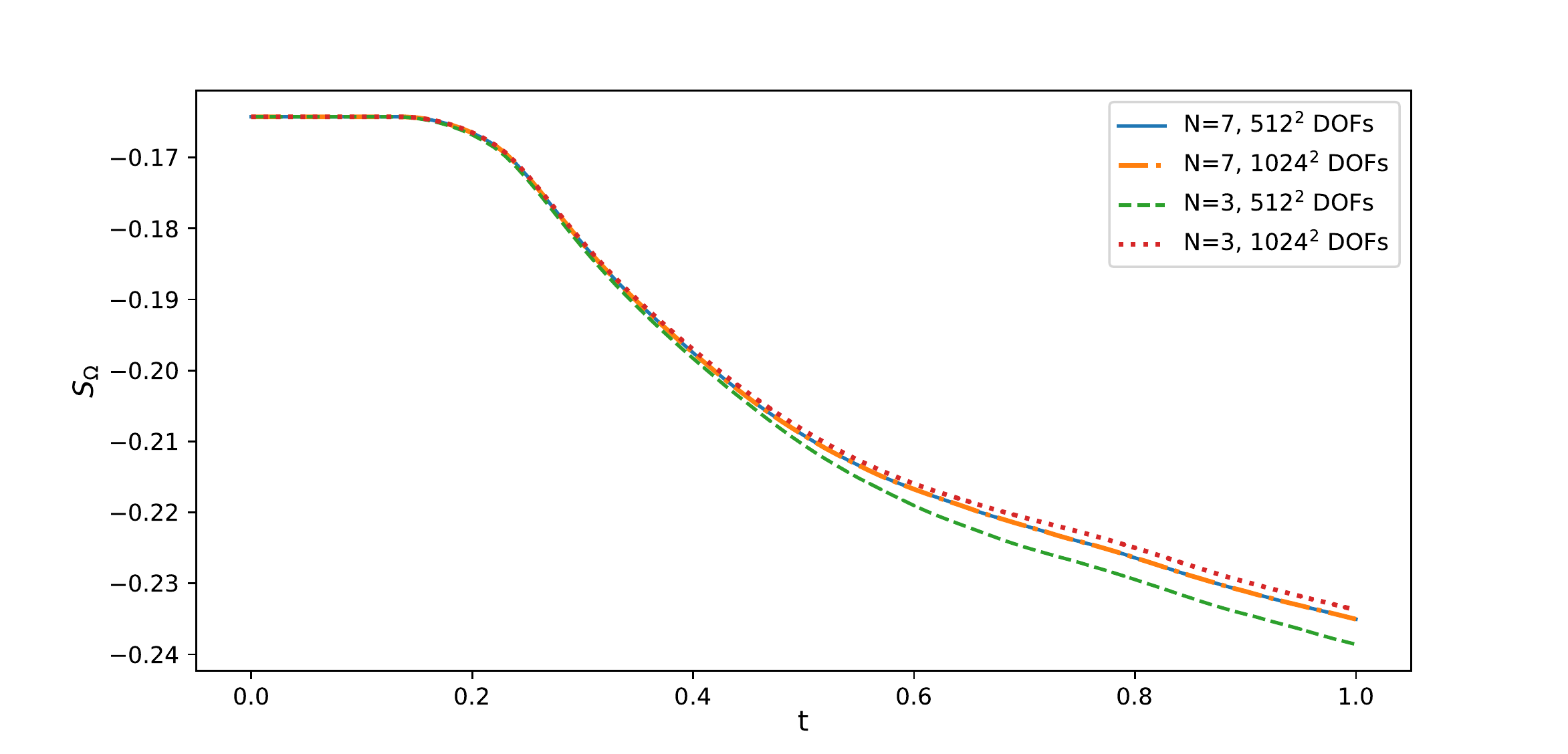}

\caption{Evolution of the total mathematical entropy, $S_{\Omega}$, over time for the Orszag-Tang vortex test.}
\label{fig:OT_Entropy}
\end{figure}

\pagebreak

\subsection{GEM Reconnection Challenge}

The GEM (Geospace Environmental Modeling) reconnection challenge was originally proposed by \citet{Birn2001} as a way of testing the robustness and accuracy of MHD codes and of comparing different MHD models (e.g. resistive MHD, Hall MHD, etc).

Magnetic reconnection refers to the phenomenon where oppositely directed magnetic field lines break and reconnect in a plasma, altering the magnetic field topology.
This process occurs in solar flares, in the Earth's magnetosphere and in plasma confinement devices, such as Tokamaks \cite{Helander2002,Ono2012}.
Magnetic reconnection can only occur in resistive plasmas, because in ideal (non-resistive) plasmas the magnetic field is \textit{frozen-in} to the plasma due to Ohm's law \cite{Ciuca2020}.
Besides the purely resistive effects, the rate of magnetic reconnection is governed by the Hall effect
\cite{Birn2001,Mignone2012,Sousa2015,Ciuca2020}.

Even though the resistive GLM-MHD equations are not the best model to simulate magnetic reconnection processes, as they do not include the Hall effect, our goal with this test, rather than an accurate description of the physical phenomenon, is twofold. 
First, we show that our proposed shock-capturing methods can be used when resistive terms are present in the MHD system.
Second, we reproduce the reconnection flux rates that are obtained with other resistive MHD codes.

The initial condition for the GEM reconnection challenge is a stationary current sheet,
\begin{align*}
\rho(x,y,t=0) &= \mathrm{sech}^2(y/l)+0.2 
& p(x,y,t=0) &=\frac{\rho B_0^2}{2} \\
v_1(x,y,t=0)  &= 0
& v_2(x,y,t=0) &= 0 \\
B_1(x,y,t=0)  &= B_0 \tanh(y/l) + B_1'
& B_2(x,y,t=0) &= B_2' \\
\psi(x,y,t=0), &= 0 &&
\end{align*}
where the magnetic field is perturbed with \cite{Birn2001}
\begin{align*}
B_1' &= -0.1 \frac{\pi}{L_y} \sin \left(\frac{\pi y}{L_y} \right) \cos \left( \frac{2 \pi x}{L_x} \right) \\
B_2' &= 0.1 \frac{2 \pi}{L_x} \sin \left( \frac{2 \pi x}{L_x} \right)  \cos \left(\frac{\pi y}{L_y} \right).
\end{align*}
This perturbation introduces a small magnetic island on the periodic boundary that triggers the reconnection process.

Without the FV stabilization, the entropy stable DGSEM crashes with this setup due to positivity issues.
We use $\epsilon=\rho p$ as the indicator quantity, as it showed to provide the necessary robustness for this test, and run the simulation from $t_0=0$ until $t_f=100$ with two different resolutions ($512 \times 256$ DOFs, and $1024 \times 512$ DOFs) and two different values for the resistivity ($\resistivity = 10^{-3}$ and $\resistivity = 5 \times 10^{-3}$).
The Navier-Stokes viscosity is set to $\viscosity = 0$ in this test in agreement with the literature on the topic \cite{Birn2001,Mignone2012,Sousa2015,Ciuca2020}.

\begin{figure}[htb!]
	\centering
	\includegraphics[trim=800 1450 800 0 ,clip,width=0.4\linewidth]{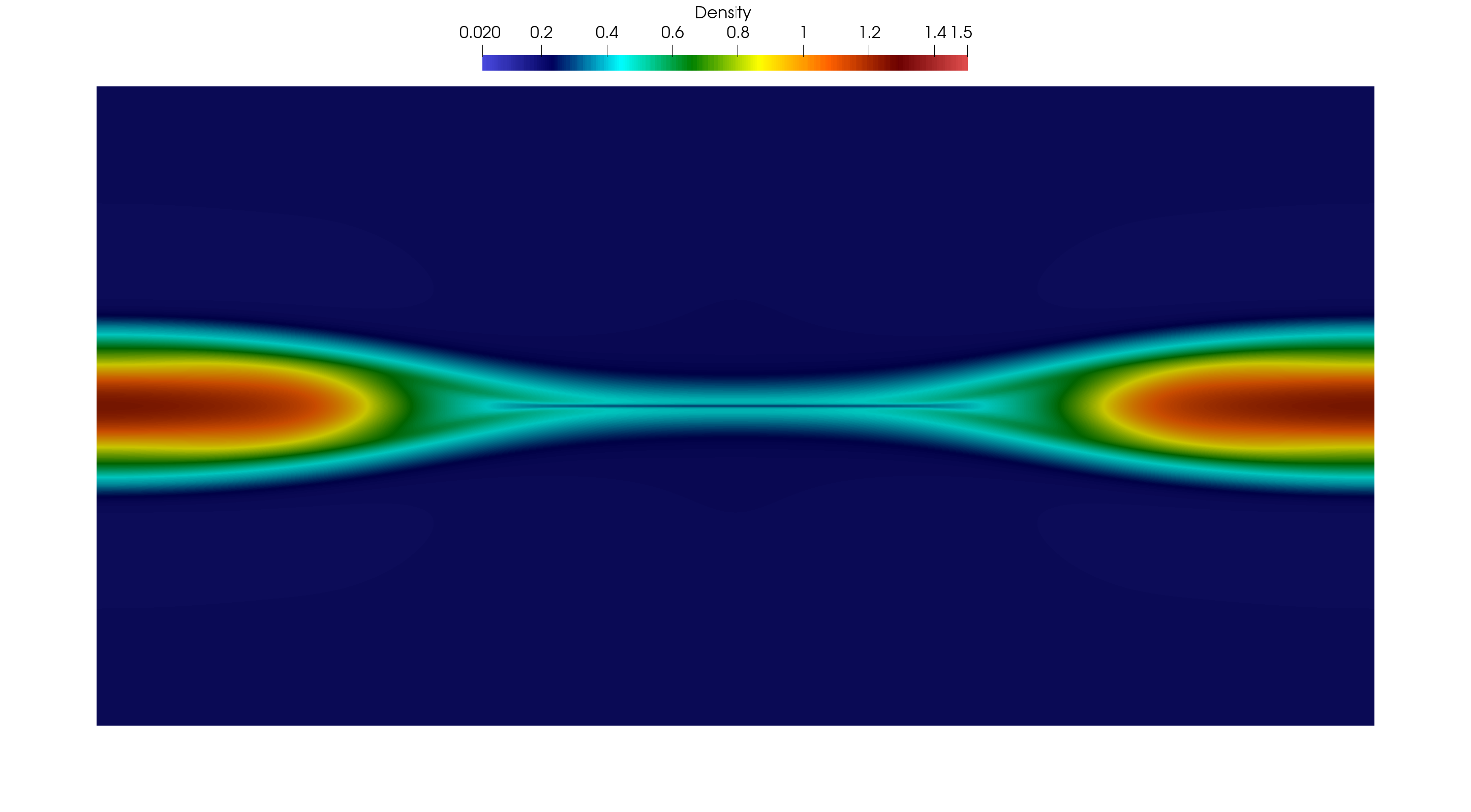}
	\includegraphics[trim=800 1450 800 0 ,clip,width=0.4\linewidth]{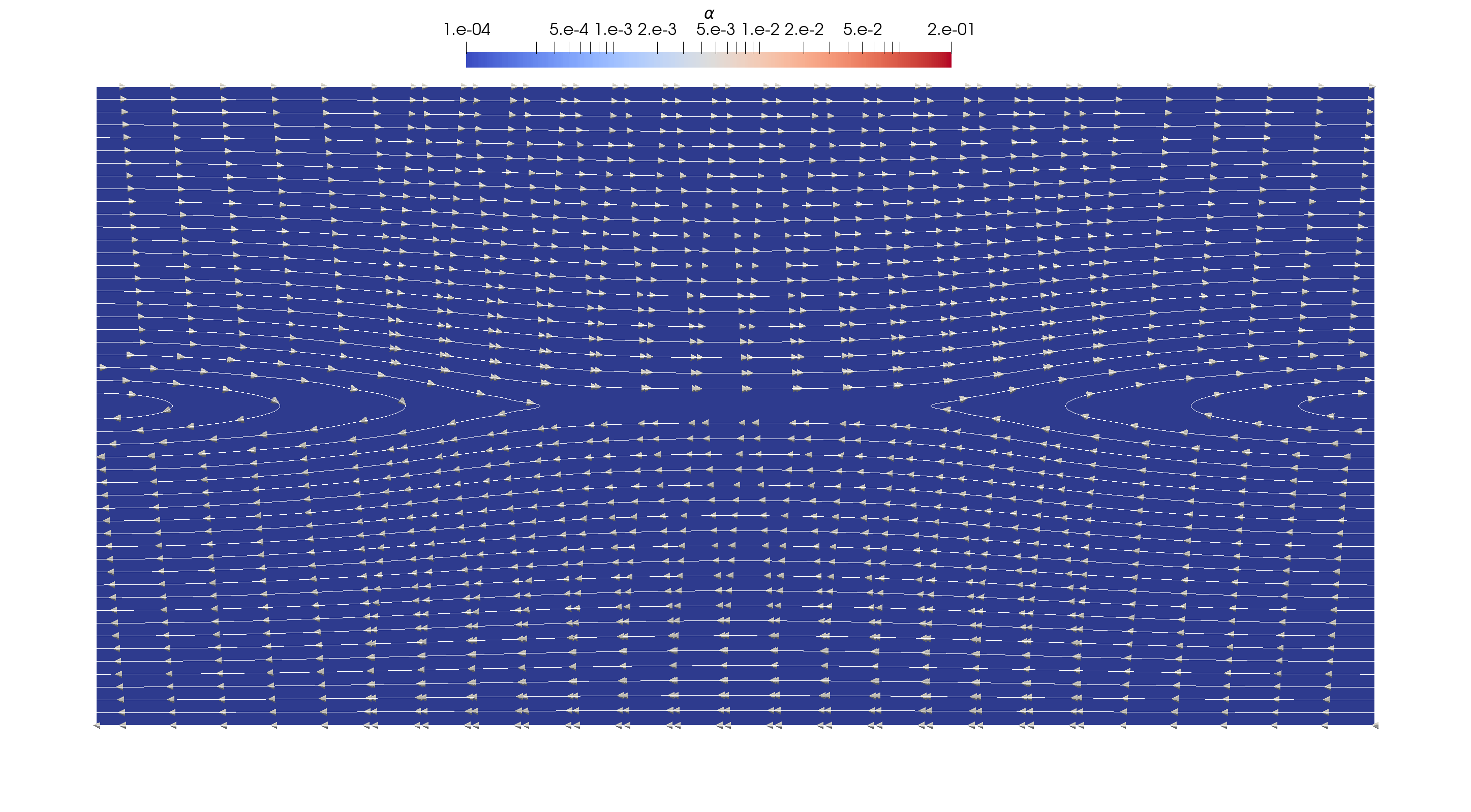}
		\includegraphics[trim=150 150 150 150 ,clip,width=0.4\linewidth]{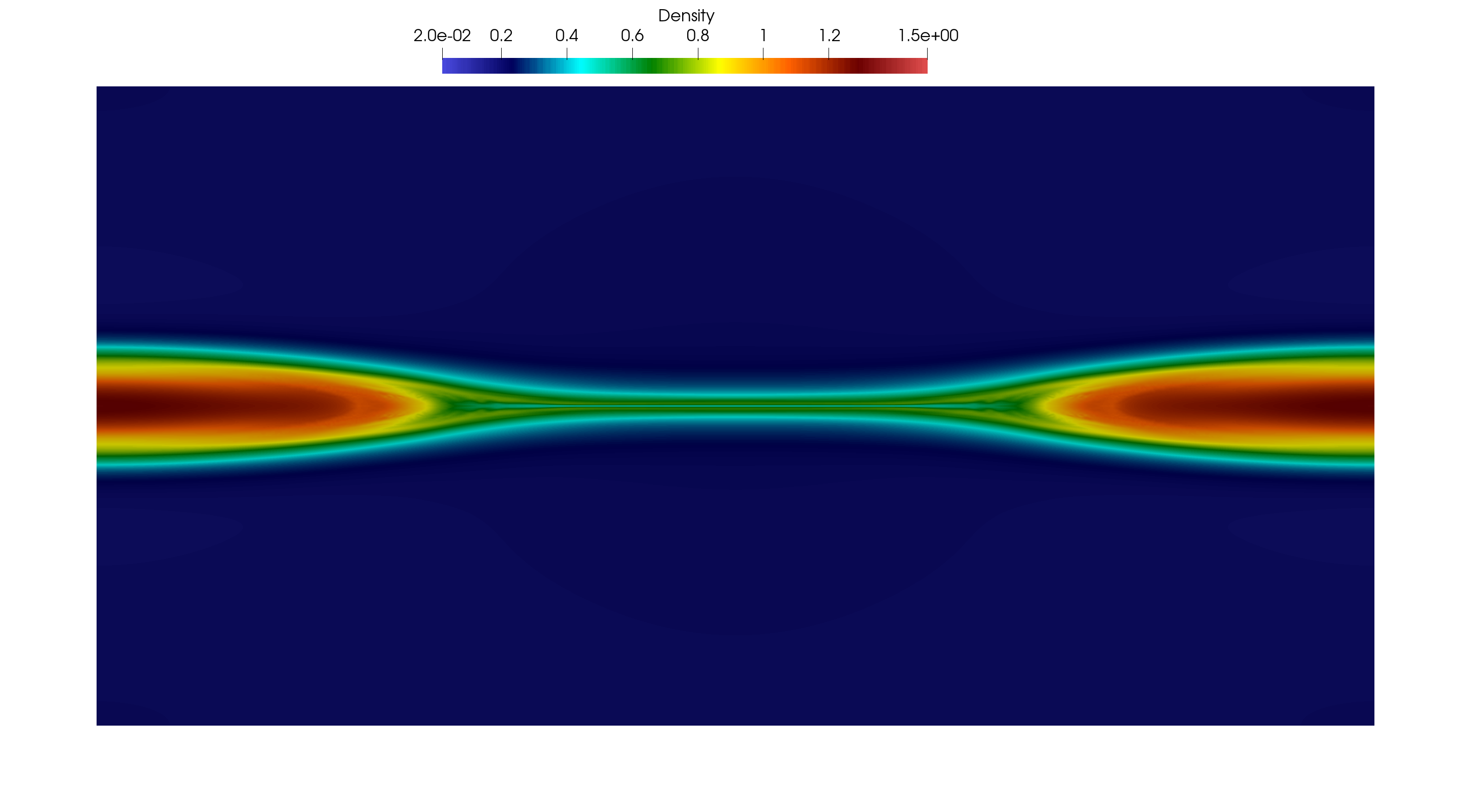}
		\includegraphics[trim=150 150 150 150 ,clip,width=0.4\linewidth]{figs/03_GEM-Reconnection/DOF1024_N7_eta_1e-3_t_050_Alpha.png}
		\includegraphics[trim=150 150 150 150 ,clip,width=0.4\linewidth]{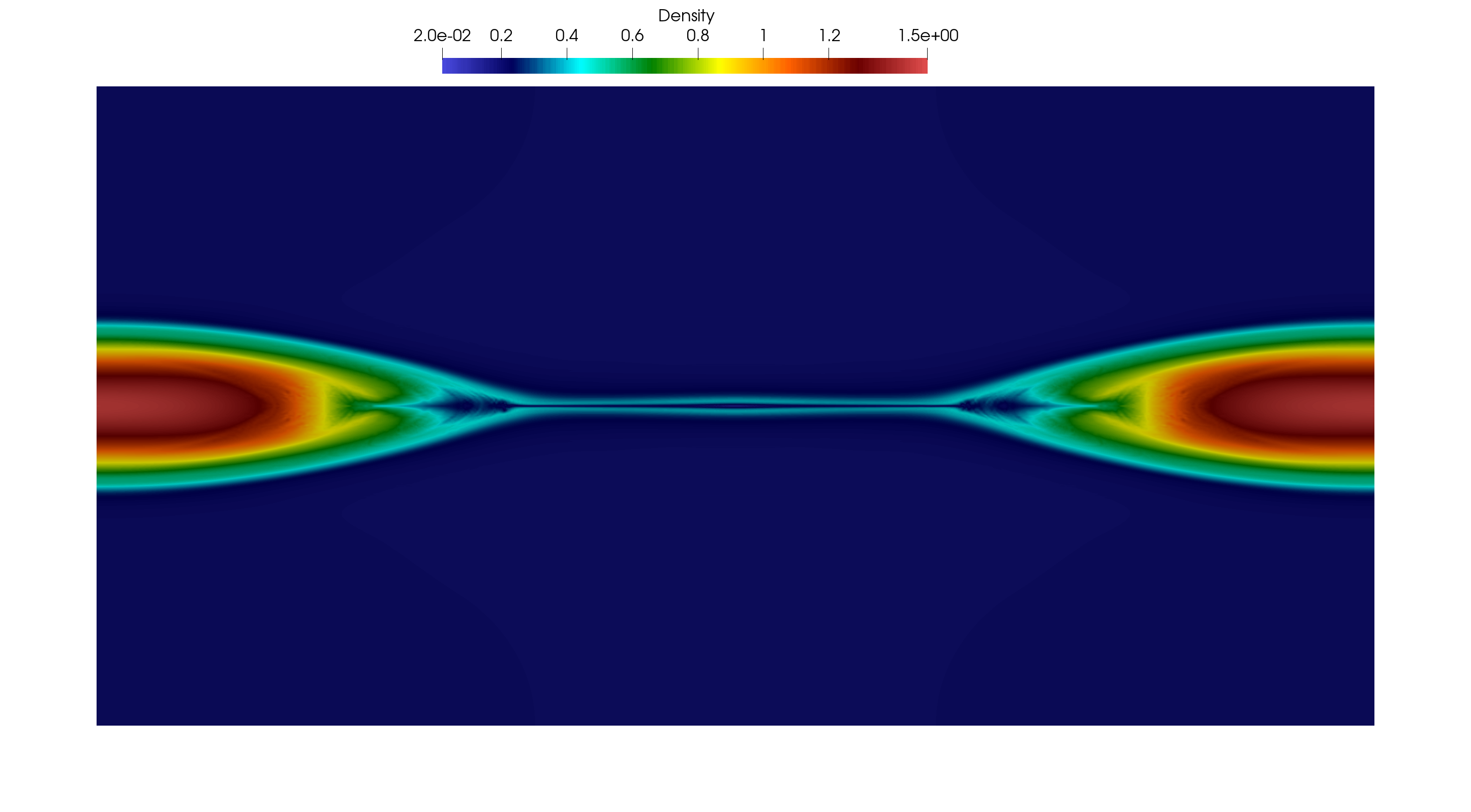}
		\includegraphics[trim=150 150 150 150 ,clip,width=0.4\linewidth]{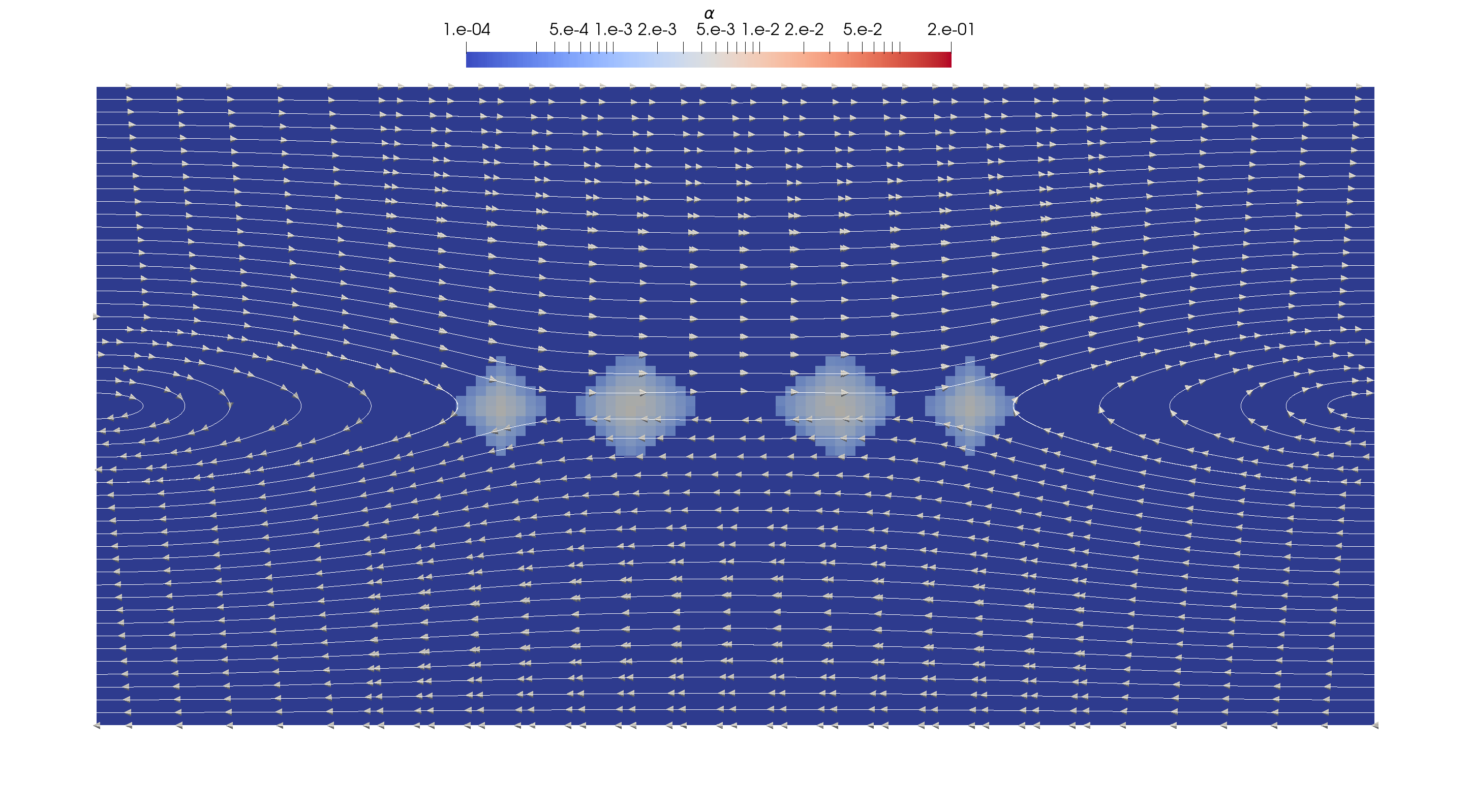}
	\caption{Evolution of the GEM reconnection challenge test ($\resistivity = 10^{-3}$) with the TVD-ES shock capturing method. 
	We show the density, the blending coefficient, and the magnetic field lines for the simulation with $1024 \times 512$ degrees of freedom and $N=7$ for $t=50$ (top) and $t=100$ (bottom).
	We use the indicator of Section \ref{sec:Indicator} with $\epsilon=\rho p$.}
	\label{fig:GEM1024_N7_1}
\end{figure}

Figures \ref{fig:GEM1024_N7_1} and \ref{fig:GEM1024_N7_5} show the density, the blending coefficient, and the magnetic field lines as the simulation with the TVD-ES method and $N=7$ advances for $\resistivity=10^{-3}$ and $\resistivity=5 \times 10^{-3}$, respectively.
As expected, the reconnected flux region is larger when the resistivity of the medium is higher.
As the simulation advances, some stabilization is required in the regions where the reconnection process occurs, and sometimes also along the reconnected magnetic field lines (not visible in these pictures), as magnetosonic and Alfvén waves travel away from the reconnection spots.
Note that only a small amount of blending is necessary to stabilize the simulation, but as mentioned before, without the FV stabilization the simulation crashes.

\begin{figure}[htb!]
	\centering
	\includegraphics[trim=800 1450 800 0 ,clip,width=0.4\linewidth]{figs/03_GEM-Reconnection/DOF1024_N7_eta_5e-3_t_050_Density.png}
	\includegraphics[trim=800 1450 800 0 ,clip,width=0.4\linewidth]{figs/03_GEM-Reconnection/DOF1024_N7_eta_1e-3_t_050_Alpha.png}
		\includegraphics[trim=150 150 150 150 ,clip,width=0.4\linewidth]{figs/03_GEM-Reconnection/DOF1024_N7_eta_5e-3_t_050_Density.png}
		\includegraphics[trim=150 150 150 150 ,clip,width=0.4\linewidth]{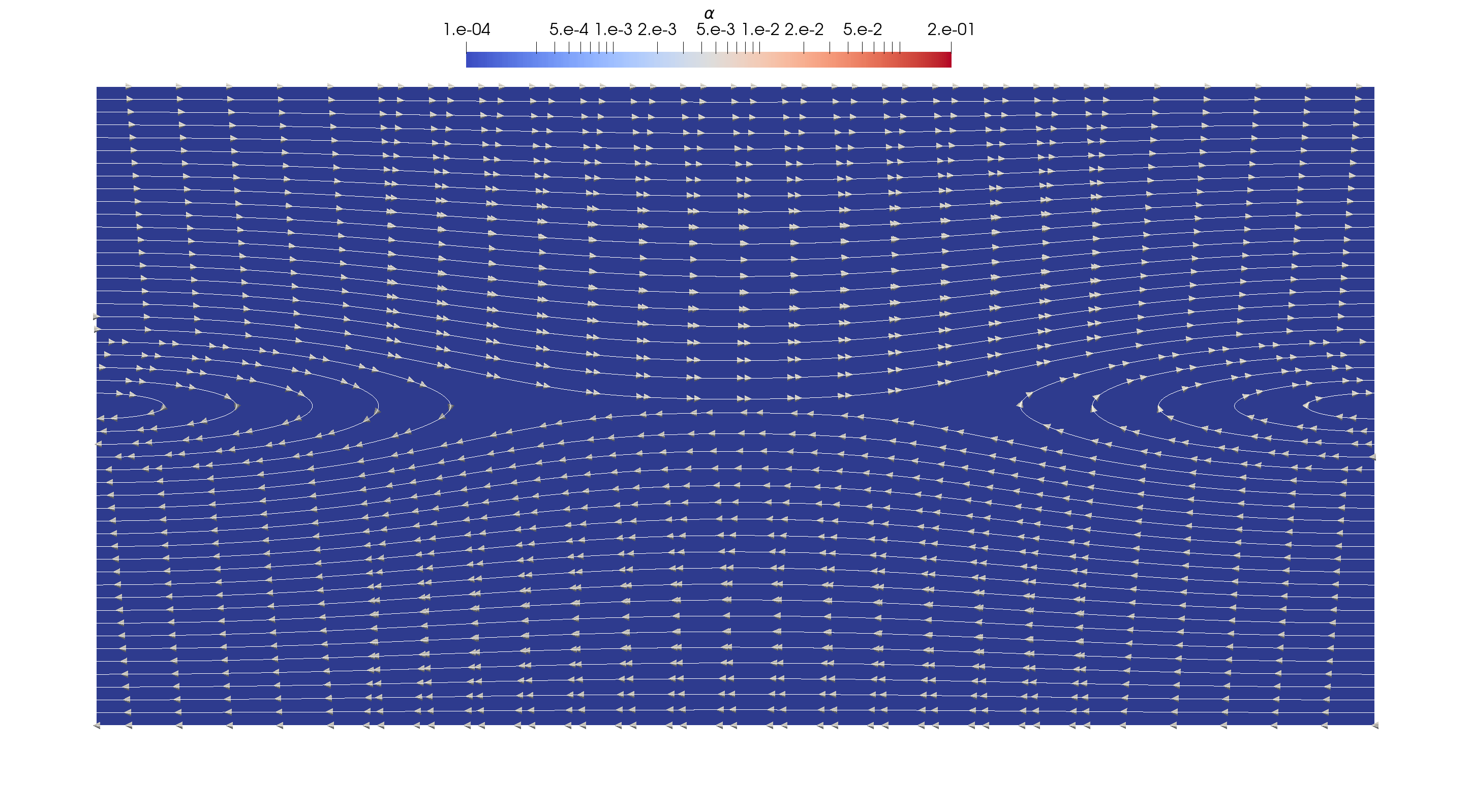}
		\includegraphics[trim=150 150 150 150 ,clip,width=0.4\linewidth]{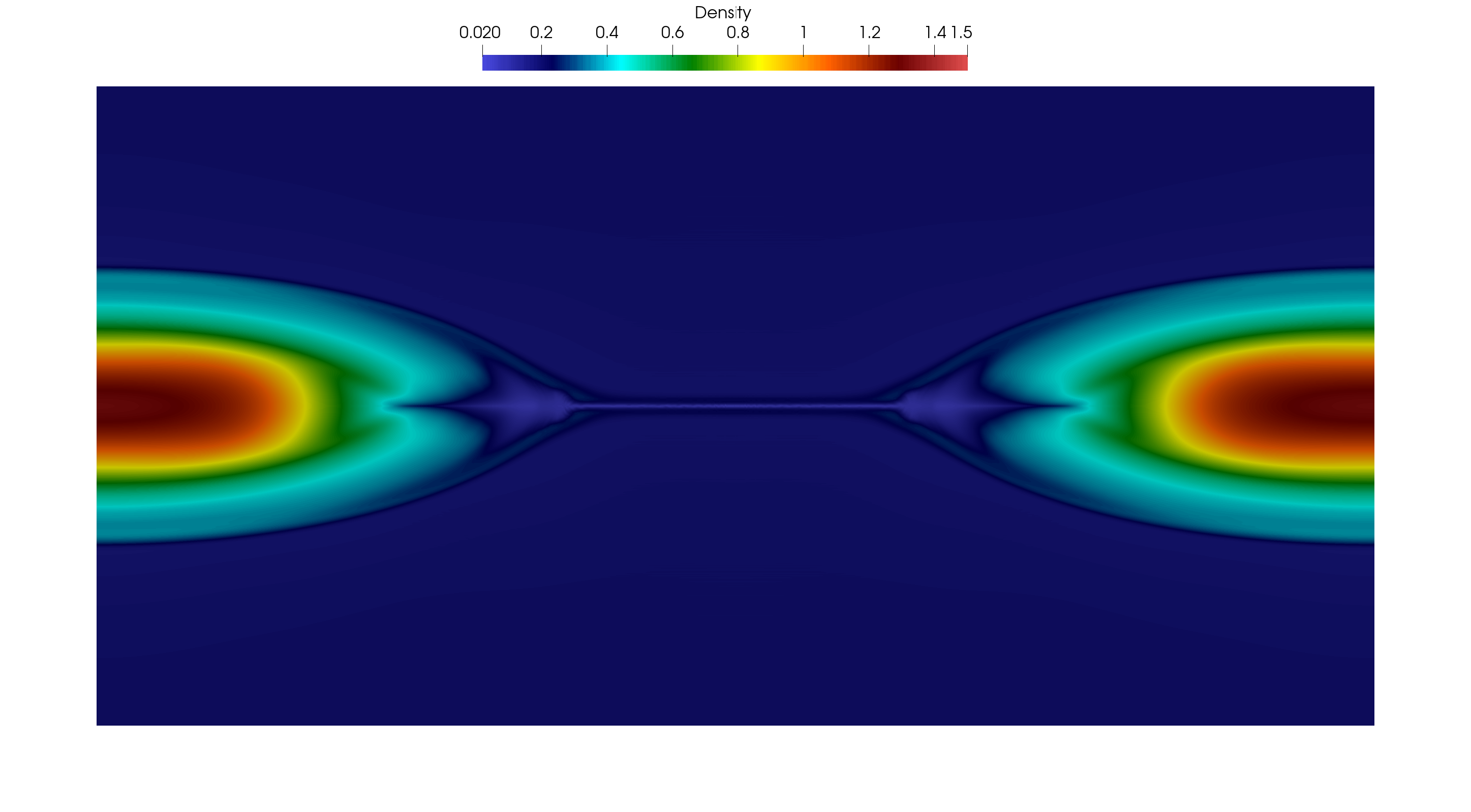}
		\includegraphics[trim=150 150 150 150 ,clip,width=0.4\linewidth]{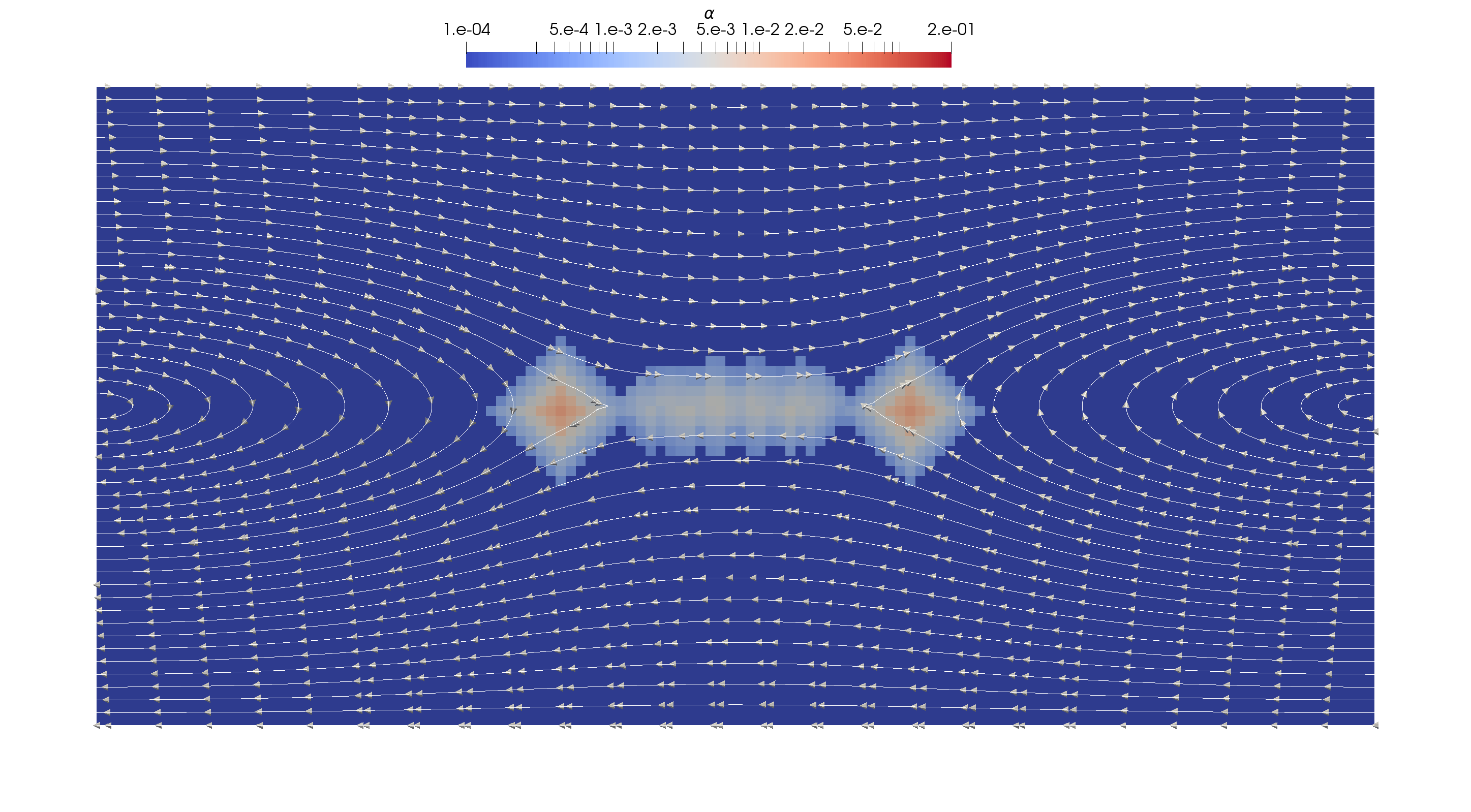}
	\caption{Evolution of the GEM reconnection challenge test ($\resistivity = 5 \times 10^{-3}$) with the TVD-ES shock capturing method. 
	We show the density, the blending coefficient, and the magnetic field lines for the simulation with $1024 \times 512$ degrees of freedom and $N=7$ for $t=50$ (top) and $t=100$ (bottom).
	We use the indicator of Section \ref{sec:Indicator} with $\epsilon=\rho p$.}
	\label{fig:GEM1024_N7_5}
\end{figure}

Finally, Figure \ref{fig:GEM_ReconnectedF} shows the evolution of reconnected flux, $\phi$, as a function of time for the two resistivities studied and different resolutions.
The reconnected flux is computed as the total magnetic field in the $y$ component over the $x$ axis \cite{Birn2001,Sousa2015},
\begin{equation}
\phi(t) = \frac{1}{2} \int_{-L_x/2}^{L_x/2} |B_2(x,y=0,t)| \d x.
\end{equation}

As can be seen in Figure \ref{fig:GEM_ReconnectedF}, the $N=3$ TVD-ES scheme produces higher reconnection rates than the $N=7$ TVD-ES scheme for the same number of degrees of freedom.
The higher reconnection rates are a consequence of the higher numerical resistivity, which is in turn a consequence of the lower polynomial degree, but also of the modal shock sensor that we employ.

For the highest value of resistivity tested, $\resistivity = 5 \times 10^{-3}$, Figure \ref{fig:GEM_ReconnectedF_5} shows a comparison of the evolution of the reconnected flux obtained with our numerical schemes and the numerical results of \citet{Birn2001}.
We only show the comparison for $t \le 40$ since we could not find data in the literature for $t > 40$.
It can be observed that the reconnection rate predicted with the FV/DGSEM method evolves similarly as in \cite{Birn2001}, but that our schemes are slightly less resistive than the reference.

Finally, in Figure \ref{fig:GEM_Entropy}, we show the evolution of the total mathematical entropy, $S_{\Omega}$, over time for the GEM reconnection challenge test.
As expected, the schemes fulfill the second law of thermodynamics.
Since this case has dissipation through resistivity, the mathematical entropy decreases monotonically from the beginning of the simulation.
Moreover, it can be observed that the setup with the higher resistivity has a higher entropy dissipation rate.

\begin{figure}[htb]
\centering
\begin{subfigure}[b]{0.45\linewidth}
	\includegraphics[width=\linewidth]{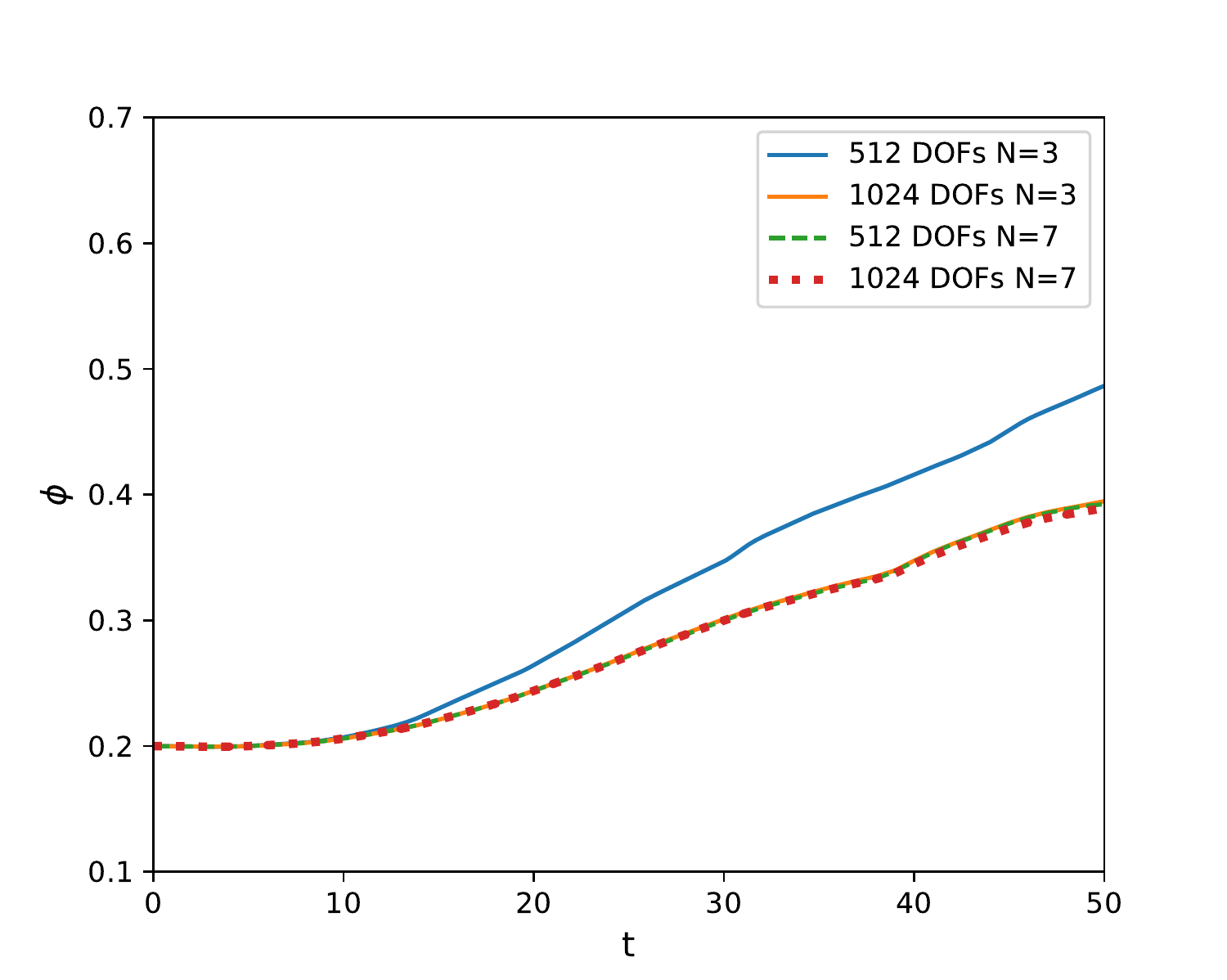}
	\caption{$\resistivity = 10^{-3}$}
\end{subfigure}
\begin{subfigure}[b]{0.45\linewidth}
	\includegraphics[width=\linewidth]{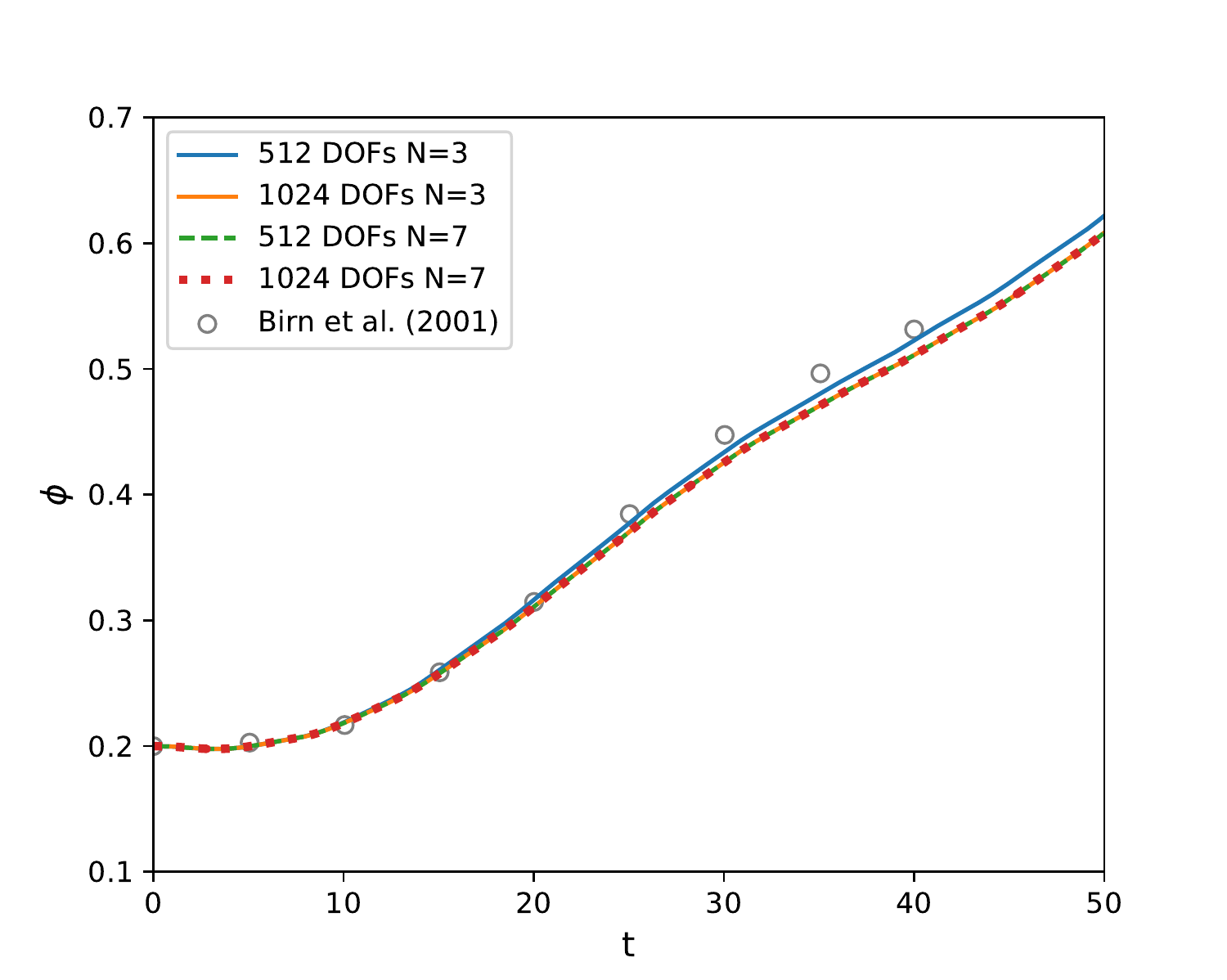}
	\caption{$\resistivity = 5 \times 10^{-3}$}
	\label{fig:GEM_ReconnectedF_5}
\end{subfigure}
\caption{Reconnected flux for the GEM reconnection challenge as a function of time obtained with the FV/DGSEM method and comparison with the data of \citet{Birn2001}.
The legend shows the number of degrees of freedom in the $x$ direction.}
\label{fig:GEM_ReconnectedF}
\centering
\includegraphics[width=0.7\linewidth]{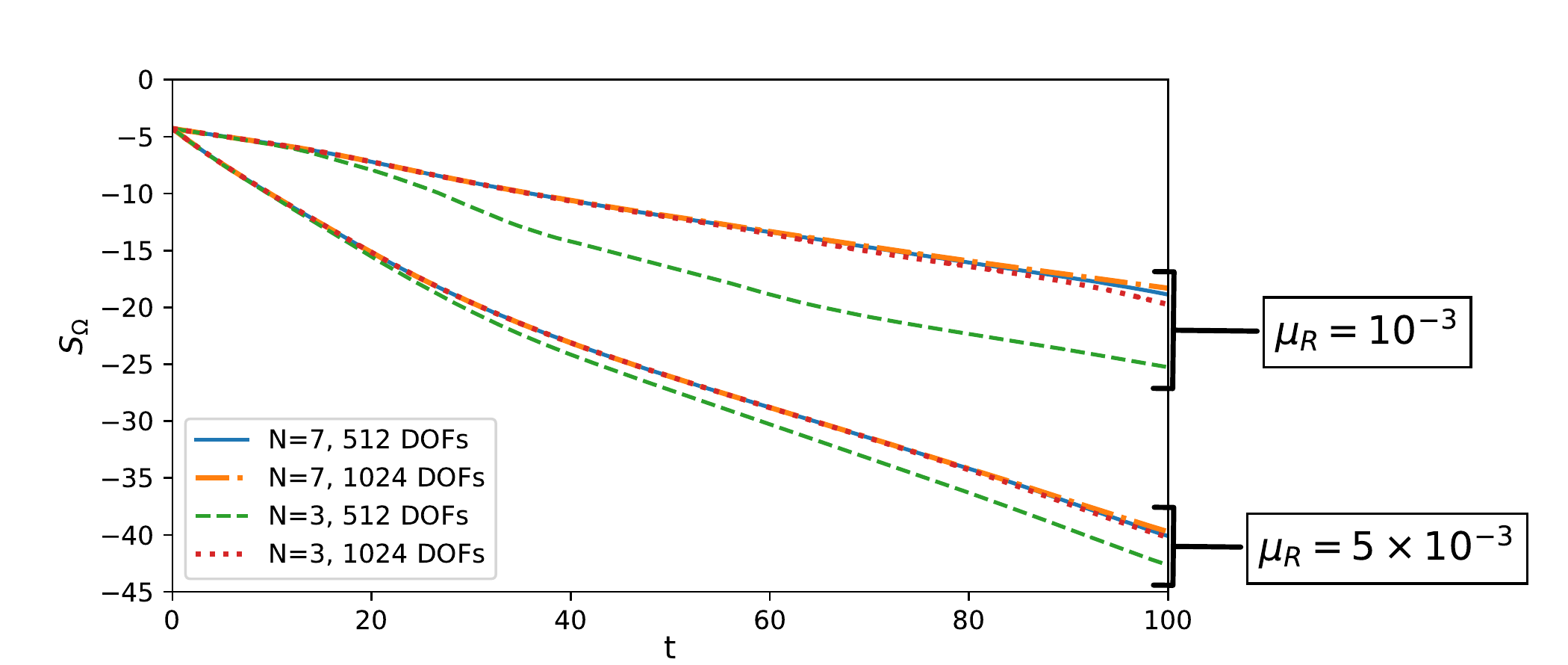}

\caption{Evolution of the total mathematical entropy, $S_{\Omega}$, over time for the GEM reconnection challenge test.
The legend shows the number of degrees of freedom in the $x$ direction.}
\label{fig:GEM_Entropy}
\end{figure}

\subsection{Io's Interaction with its Plasma Torus}

As a final test, we apply our hybrid FV/DGSEM method to simulate a space physics problem: the interaction of Jupiter's moon Io with its plasma torus.

Io is the most volcanically active body of the solar system, which is embedded in Jupiter's magnetic field, the largest and most powerful planetary magnetosphere of the solar system.
Due to its strong volcanic activity, Io expels ions and neutrals, which are in turn ionized by ultraviolet and electron impact ionization, forming a plasma torus around Jupiter \cite{Kivelson2003,Saur2004}.
As Io moves inside the plasma torus, elastic collisions of ions and neutrals inside its atmosphere generate a magnetospheric disturbance that propagates away from Io along the background magnetic field lines at the Alfvén wave speed \cite{Kivelson2003,Saur2004}.
This phenomenon creates a pair of Alfvén current tubes that are commonly called \textit{Alfvén wings}, which have been observed by several flybys \cite{Kivelson1997}.

A number of numerical studies have successfully described the Io/plasma torus interaction using compressible MHD models.
For instance, \citet{Saur2004} studied the physical phenomenon using a multi-fluid MHD model.
\citet{Jacobsen2007} used a single-fluid MHD model to study the effect of the density profile of the plasma torus and the Jovian ionosphere on the geometry of the Alfvén wings.
Blöcker et al. \cite{Blocker2017,Blocker2018} also used a single-fluid MHD model to analyze the effect of volcanic plumes in Io's atmosphere on the magnetic field iteractions.
Recently, \citet{Bohm2019} showed that an entropy stable high-order DG discretization of the ideal MHD equations encounters positivity issues in this problem, and simulated the Io/plasma torus phenomenon using an artificial dissipation-based shock capturing method.
In this section, we show that our proposed hybrid FV/DGSEM discretization is robust enough to handle the steep gradients that appear in this setup.

Following \cite{Blocker2017,Blocker2018,Jacobsen2007}, we use a simplified description of the physical problem, where neutrals, relativistic, visco-resistive, and Hall effects are neglected.
For simplicity, we follow the approach of \cite{Blocker2017,Blocker2017,Jacobsen2007}, which models the moon using a source term that represents a gas cloud that exchanges momentum and energy with the plasma torus\footnote{A more accurate representation of the plasma interaction with a planetary body can be obtained with non-conducting inner boundaries \cite{Duling2014}. However, the simplified version with the gas cloud is enough for the purpose of this test.}.
In summary, we use a modified version of the ideal GLM-MHD model \eqref{eq:idealGLM-MHD},
\begin{equation}
\partial_t \state{u} + \Nabla \cdot \blocktensor{f}^a + \noncon = \state{r}_c,
\end{equation}
where the source term is used to model the neutral-ion collision that takes place in Io's atmosphere \cite{Blocker2016,Blocker2017,Blocker2018},
\begin{equation}
\state{r}_c = \left( 0, -\varpi \rho \vec{v}, -\varpi k, \vec{0}, 0 \right)^T,
\end{equation}
$\varpi$ is the ion-neutral collision frequency and $k$ is the total energy exchange in the moon's atmosphere, modeled as
\begin{equation}
k = \rho E - \frac{1}{2 \mu_0} \left( \norm{\vec{B}}^2  + \psi^2 \right).
\end{equation}

In our model, we locate Io in the origin of our computational domain and define the collision frequency as \cite{Bohm2019},
\begin{equation}
\varpi=
\begin{cases}
\varpi_{\text{in}}, & r \le R_{\text{Io}} \\
\varpi_{\text{in}} \exp \left( \frac{R_{\text{Io}} - r}{d} \right), & \text{otherwise},
\end{cases}
\end{equation}
where $r=\norm{\vec{x}}$ is the distance to the origin, $R_{\text{Io}}$ is the radius of Io, and $d=150/1820$ \cite{Bohm2019} is dilatation factor that models how Io's atmosphere becomes thinner away from the moon's surface.

Since this problem contains a broad scale of orders of magnitude, we perform a non-dimensionalization following \cite{Bohm2019}.
The characteristic quantities are $R_{\text{Io
}}=1.82 \times 10^{6}$ m, the radius of Io, $\rho_{\infty}=7.02 \times 10^{-17}$ kg$/$m$^{3}$, the density of the plasma torus \cite{Jacobsen2007,Blocker2018,Blocker2017}, $V_{\infty}=56 \times 10^3$ m$/$s, the orbital velocity of Io, and, as stated at the beginning of Section \ref{sec:Results}, $\mu_0=1$, which implies that we use the magnetic permeability of empty space as a characteristic quantity, $\mu_{0,\infty}=1.26\times 10^{-6}$ N$/$A$^2$.

Table \ref{tab:IoParams} contains the parameters used for the simulation in SI units and their corresponding non-dimensional values that are computed with the characteristic quantities.
Since in our simplified model the background magnetic field in $z$, $B_3$, is constant, and taking into account that we want to compare our numerical results with the experimental data taken by the I31 flyby of the Galileo spacecraft \cite{Kivelson1997}, we initialize $B_3$ as the mean measured value along the I31 path.
Note that our non-dimensionalization retains the supersonic Mach number, $\text{Ma} \approx 2$, and the sub-Alvénic magnetic Mach number, $\text{Ma}_m \approx 0.28$, of the physical setup.

\begin{table}[H!]
\caption{Initial condition and other parameters for the Io test. The quantities with subscript $0$ are part of the initial condition.} \label{tab:IoParams}
\begin{tabular}{l|rrrr}
Variable description & Variable name  & Value in SI units & Ref. & Nondim. value \\
\hline
Density of the plasma torus 	
	& $\rho_0$ 				& $7.02 \times 10^{-17}$ kg$/$m$^{3}$ & \cite{Kivelson2003} & $1$
	\\
Velocity in the $x$ direction 
	& $(v_1)_0$ 			& $56 \times 10^3$ m$/$s  & \cite{Kivelson2003} & $1$
	\\
Velocity in the $y$ and $z$ directions 	
	& $(v_2)_0$, $(v_3)_0$ 	& $0$ m$/$s & *\cite{Jacobsen2007} & $0$
	\\
Magnetic field in the $x$ and $y$ directions 	
	& $(B_1)_0$, $(B_2)_0$ 	& $0$ T & *\cite{Jacobsen2007} & $0$
	\\
Magnetic field in the $z$ direction
	& $(B_3)_0$ 	& $-1930$ nT & **\cite{Kivelson1997} & $-3.604$ 
	\\
Pressure in the plasma torus
	& $p_0$			& $34$nPa	& \cite{Kivelson1997} & $0.149$
	\\
Collision frequency 
	& $\varpi_{\text{in}}$		& $4$ Hz	 & *\cite{Bohm2019} & $127.719$
	\\
\hline
\multicolumn{5}{l}{* \  Assumed as in reference.}
\\ 
\multicolumn{5}{l}{** Taken as the mean value along the I31 path.}
\end{tabular}
\end{table}

\begin{figure}[htb]
\centering
\begin{subfigure}[b]{0.35\linewidth}
\includegraphics[trim=0 80 1600 80,clip,width=\linewidth]{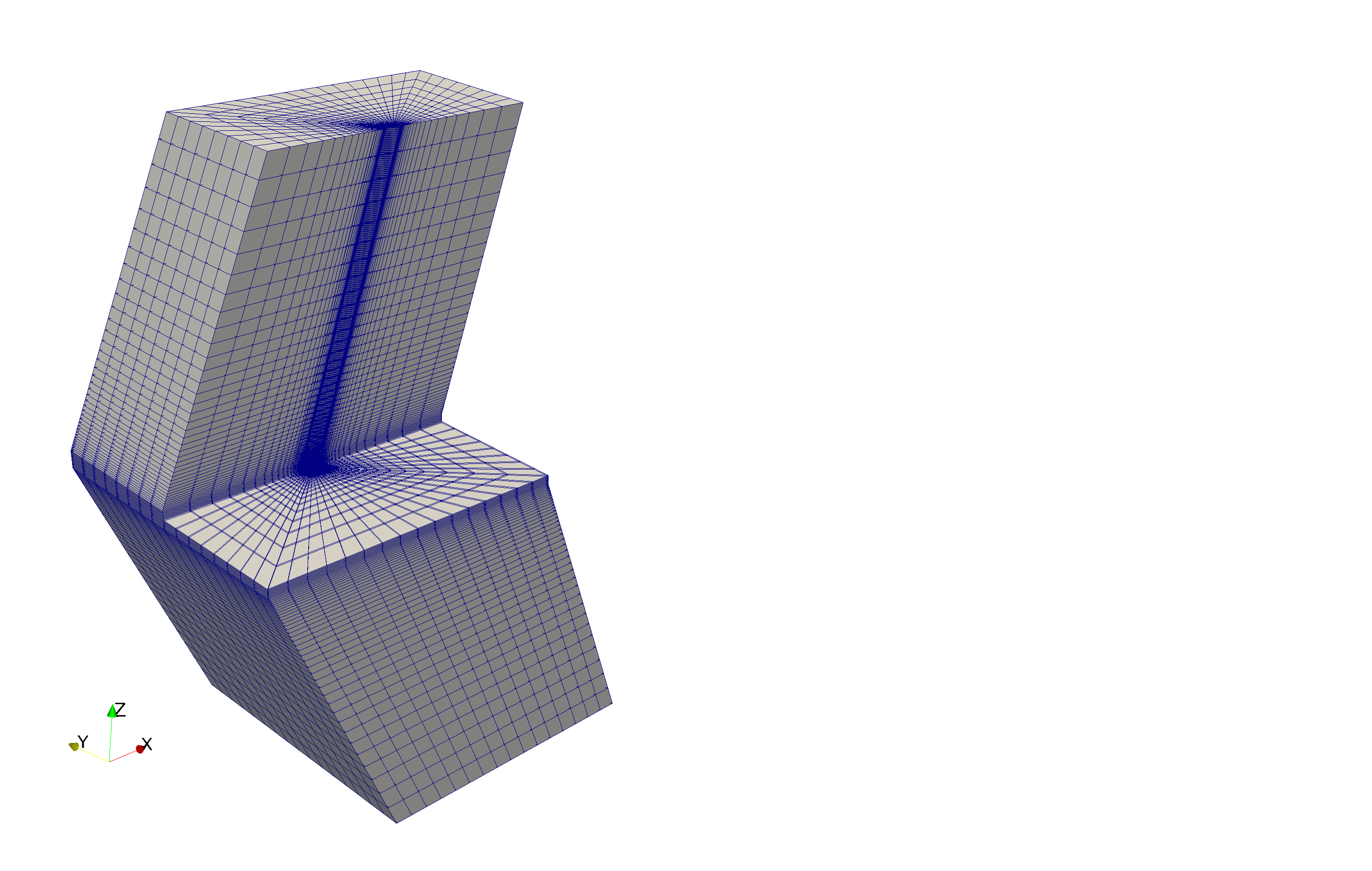}
\caption{Clip of the entire mesh.}
\end{subfigure}
\begin{subfigure}[b]{0.45\linewidth}
\includegraphics[trim=0 0 0 0,clip,width=\linewidth]{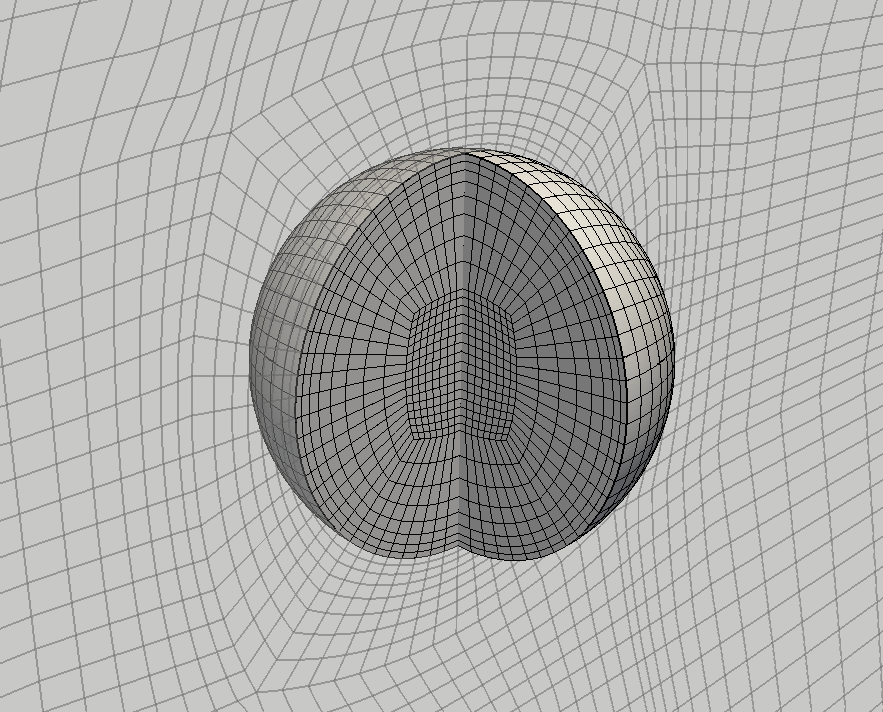}
\caption{Detail of the curvilinear spherical mesh.}
\end{subfigure}
\caption{Mesh used for the simulation of Io's interaction with its plasma torus.}
\label{fig:Io_Setup}
\end{figure}

Our computational domain spans $x \in [-20,34]$, $y \in [-20,20]$, $z \in [-40,40]$.
We use a total of 165888 curvilinear hexahedral elements with a mapping polynomial degree $N_{\text{geo}}=2$.
Figure \ref{fig:Io_Setup} shows the mesh, where
we have refined the regions of interesting flow features and we have employed curvilinear elements to match the spherical surface of Io.
We employ the TVD-ES method and set the polynomial degree to $N=4$, for a total of $20.736$ MDOFs.
All the boundaries of the domain impose the far-field conditions, $\state{u}_0$, with a weak Dirichlet boundary condition.

Following \cite{Blocker2018,Bohm2019}, we run the simulation until before the Alfvén wings touch the upper and lower boundaries of the domain ($t=10$) to avoid problems with the wave reflections.
As we will show, this simulation time is enough to observe the deflection of the Jovian magnetic field and to compare the numerical results with measurements performed during the I31 flyby of the Galileo space craft.

Figure \ref{fig:Io_Contours} shows the sonic Mach number, the magnetic field in the $x$ direction and the blending coefficient for a slice cut at $y=0$ of the three-dimensional domain at non-dimensional time $t=10$.
As can be observed, inside the Alfvén wings the magnetic field gets deflected and the Mach number reduces since the plasma flow gets slowed down.
In the wake that forms behind Io, the density increases significantly, leading to high Mach numbers and shock-like structures.
Moreover, it can be seen that in general a low amount of stabilization is required to stabilize the simulation, predominantly in the atmosphere of Io, where the plasma interaction takes place.

\begin{figure}[htb]
\centering

\begin{subfigure}[b]{0.32\linewidth}
	\includegraphics[trim=0 0 0 0,clip,width=\linewidth]{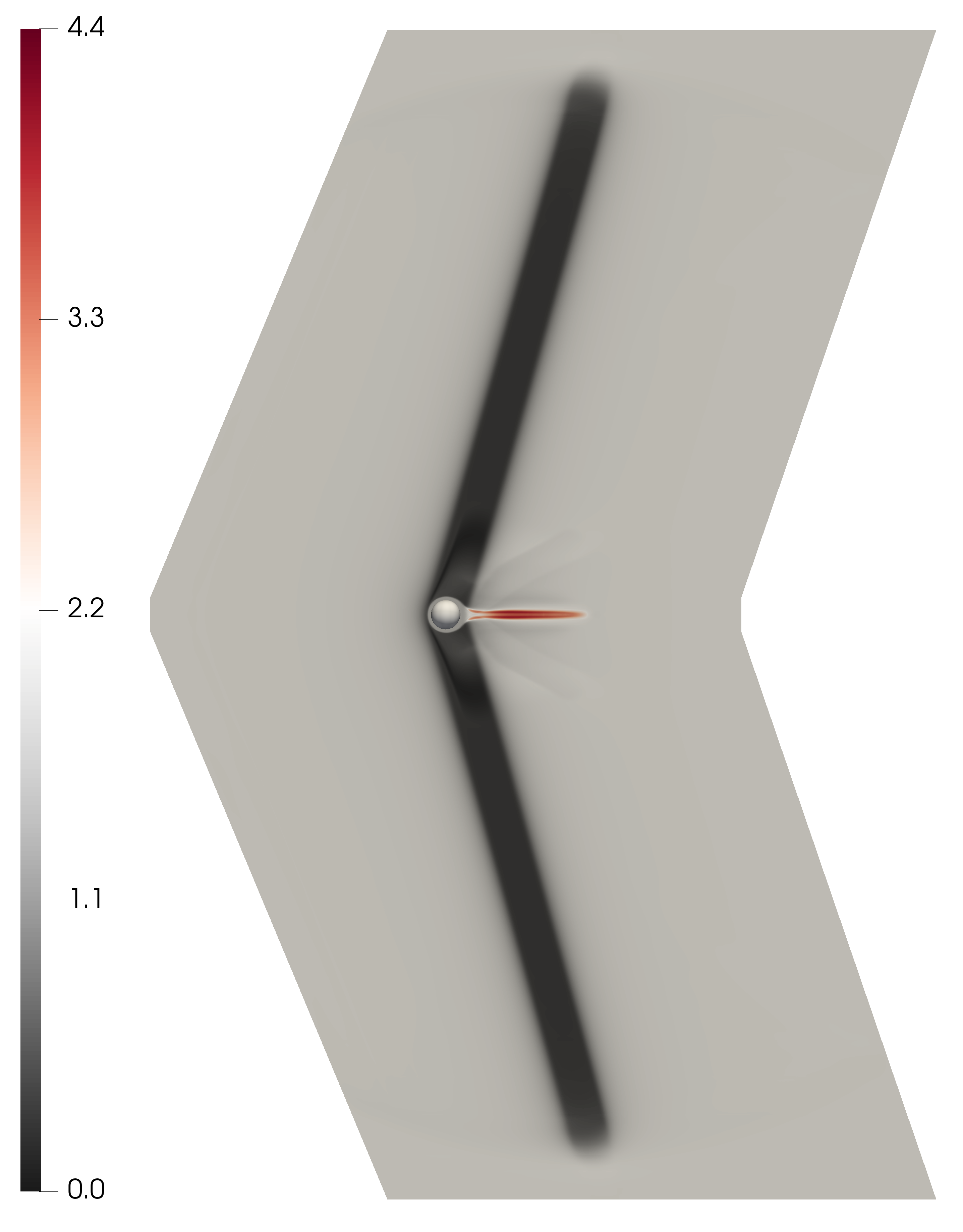}
	\caption{Sonic Mach number}
\end{subfigure}
\begin{subfigure}[b]{0.32\linewidth}
	\includegraphics[trim=0 0 0 0,clip,width=\linewidth]{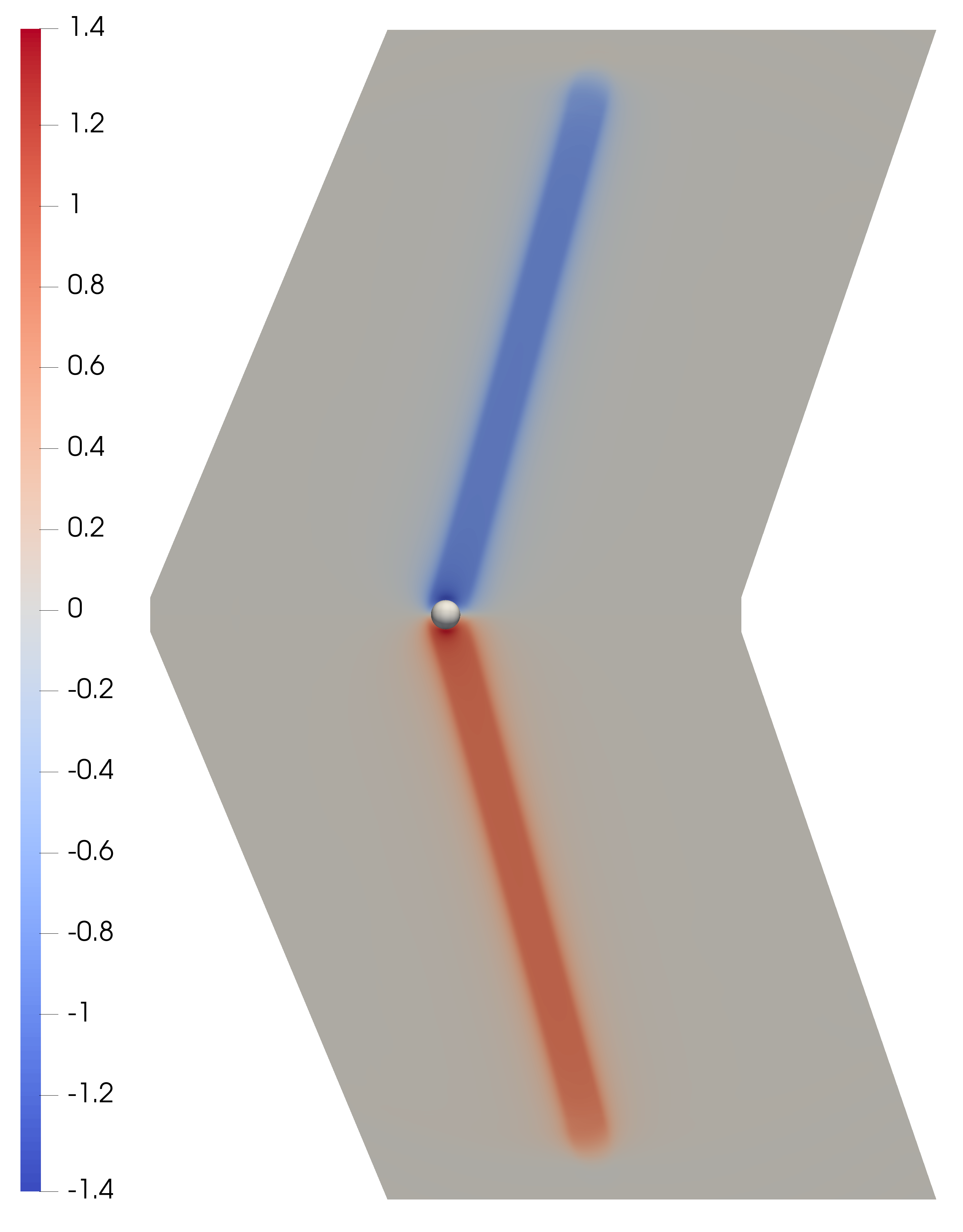}
	\caption{$B_1$}
\end{subfigure}
\begin{subfigure}[b]{0.32\linewidth}
	\includegraphics[trim=0 0 0 0,clip,width=\linewidth]{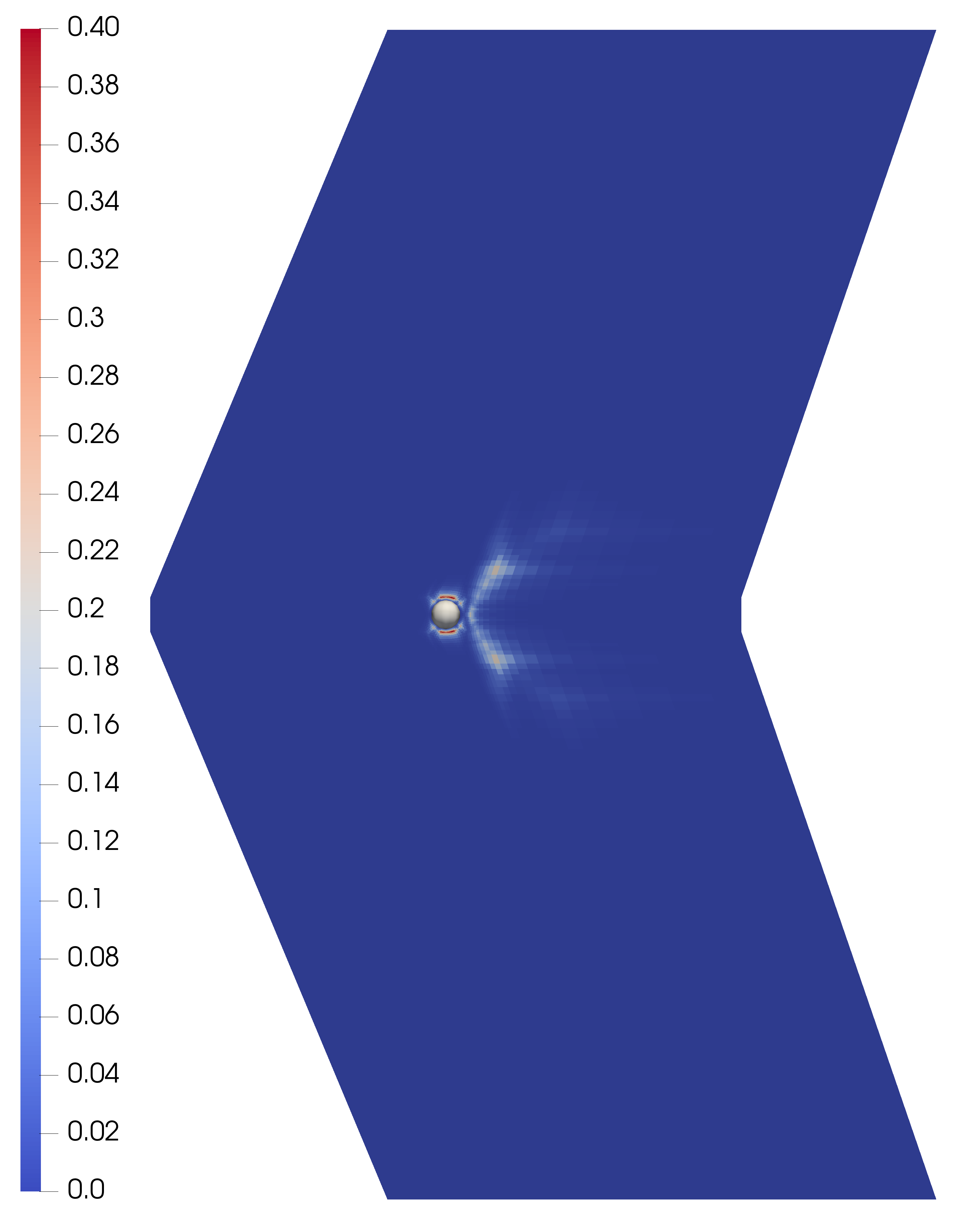}
	\caption{$\alpha$}
\end{subfigure}

\caption{Slice cuts at $y=0$ of the three-dimensional domain at non-dimensional time $t=10$ showing the sonic Mach number, the magnetic field in the $x$ direction and the blending coefficient.}
\label{fig:Io_Contours}
\end{figure}

In Figure \ref{fig:Io_DetailVz_Galileo}, we show a detailed view of the plasma velocity in the $z$ direction in the vicinity of Io on a slice cut at $y=0$, and also the trajectory of the I31 flyby of the Galileo spacecraft.
The zoomed-in view shows the shock-like structures in the wake behind the moon.

\begin{figure}[htb]
\centering
\includegraphics[trim=0 0 0 0,clip,width=0.5\linewidth]{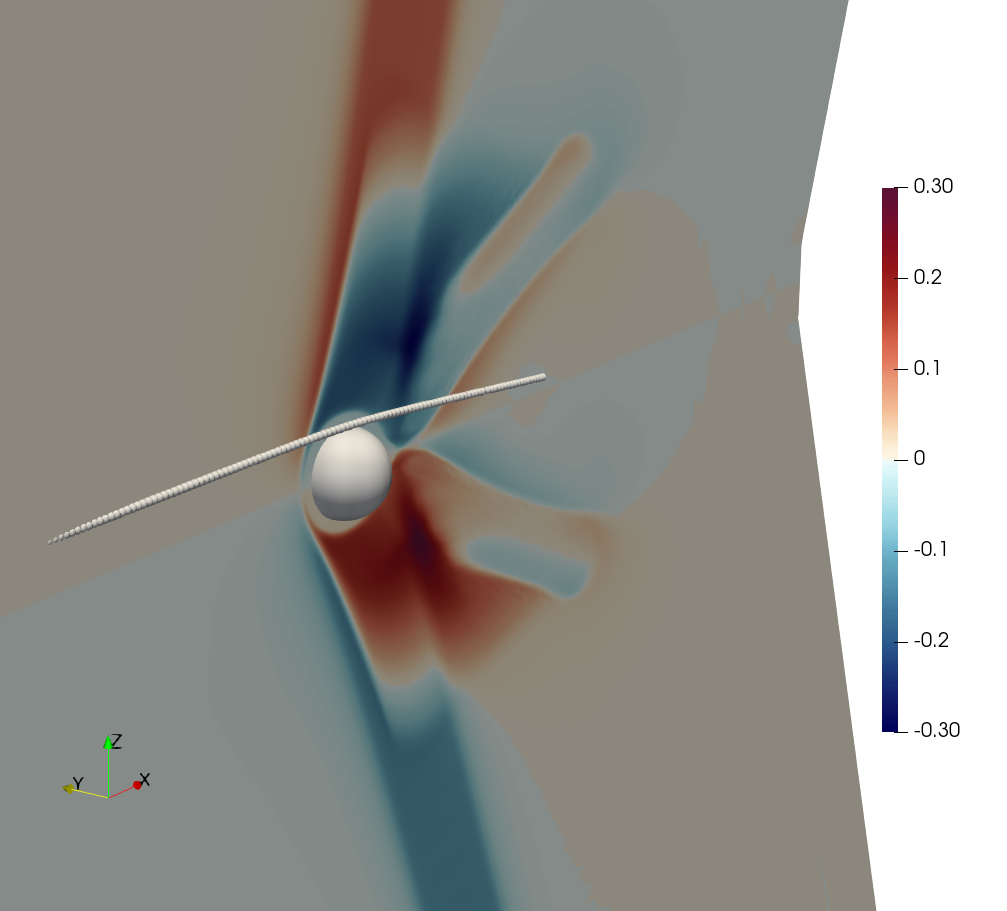}

\caption{Detail of the plasma velocity in $z$ direction ($v_3$) on a slice cut at $y=0$ and representation of the trajectory of the I31 flyby of the Galileo spacecraft with data obtained from \cite{Kivelson1997}.}
\label{fig:Io_DetailVz_Galileo}
\end{figure}

Figure \ref{fig:Io_Magnetometer} shows a comparison of the three magnetic field components along the I31 trajectory predicted by the TVD-ES discretization of the GLM-MHD model (transformed back to SI units) and the I31 magnetometer measurements \cite{Kivelson1997}. 
In the $x$ axis we show the location of the measurements (in Io Phi-Omega -IPHIO- coordinates) and the time of the measurements (in universal time), which were conducted on the sixth of August 2001. Note that the numerical $B_1$ and $B_2$ values are shifted with the mean experimental value to account for the fact that in our setup we supposed that the background magnetic field is zero in the $y$ and $z$ directions.

This test shows that our simplified physical model, together with the hybrid FV/DGSEM TVD-ES discretization, is able to capture some of the relevant features of the Io/plasma torus interaction problem.

\begin{figure}[htb]
\begin{center}
\includegraphics[trim=0 50 0 0,clip,width=\linewidth]{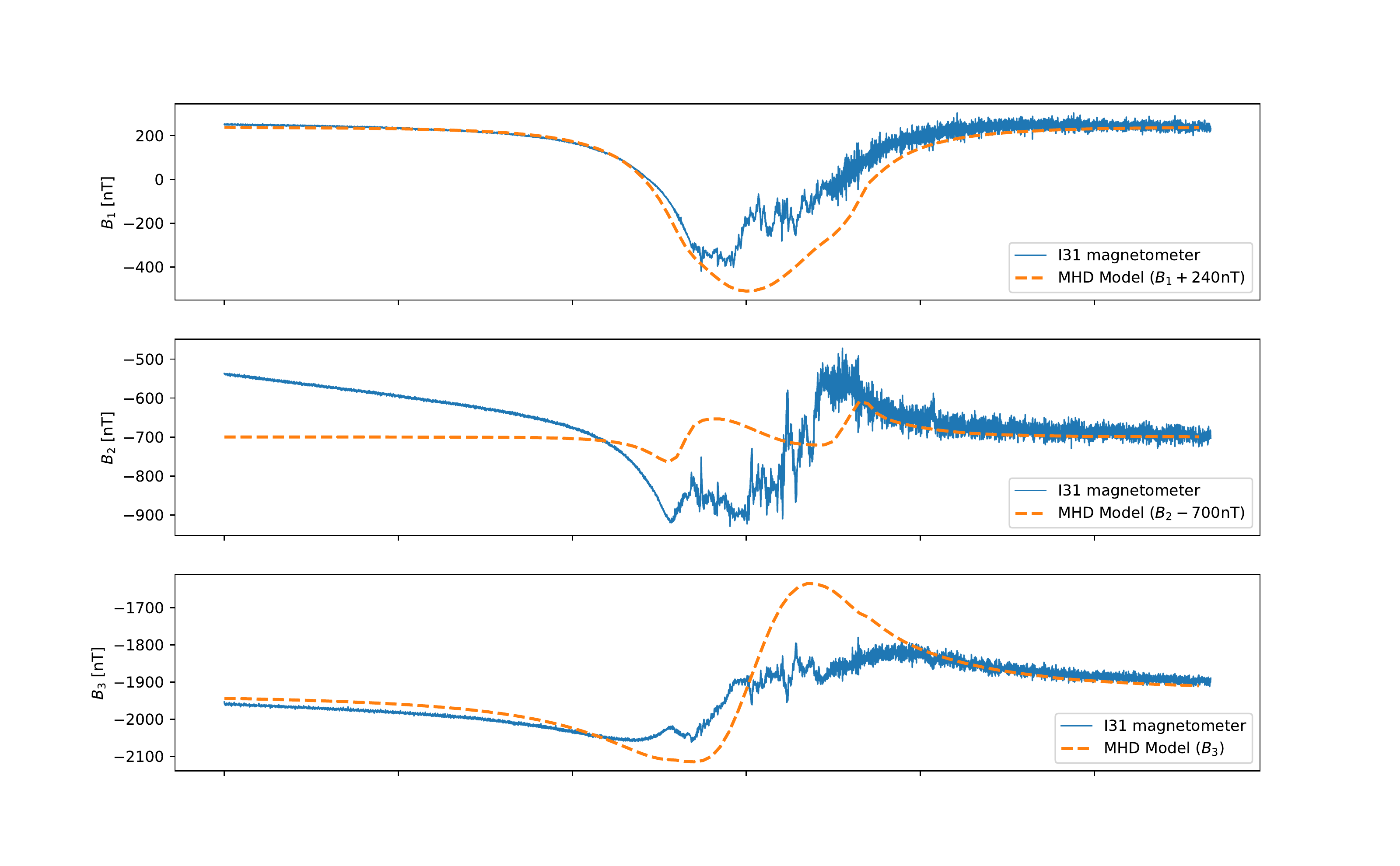}
\begin{tabular}{c|m{1.65cm}m{1.65cm}m{1.65cm}m{1.65cm}m{1.65cm}m{1.65cm}m{0.8cm}}
$t$[UT] & \ 4:25 & \ 4:36 & \ 4:48 & \ 4:59 & \ 5:10 & \ 5:21 & \\
$x$[R$_{\text{Io}}$] & 
-7.52 &
-5.05 &
-2.56 &
-0.01 &
\ 2.54 &
-6.29 &
\\
$y$[R$_{\text{Io}}$] & 
\ 0.07 &
-0.03 &
-0.14 &
-0.24 &
-0.29 &
\ 0.02 & \\
$z$[R$_{\text{Io}}$] & 
\ 0.97 &
\ 1.03 &
\ 1.08 &
\ 1.08 &
\ 0.89 &
\ 1.00 & 
\end{tabular}
\end{center}
\caption{Magnetic field components (in SI units) along the I31 trajectory predicted by the TVD-ES discretization of the GLM-MHD model and the I31 magnetometer measurements \cite{Kivelson1997}. 
In the $x$ axis we show the location of the measurements (in Io Phi-Omega -IPHIO- coordinates) and the time of the measurements (in universal time), which were conducted on the sixth of August 2001. 
Note that the numerical $B_1$ and $B_2$ values are shifted.}
\label{fig:Io_Magnetometer}
\end{figure}

\section{Conclusions}

In this work, we have presented two novel and robust entropy stable shock-capturing methods for DGSEM discretizations of the GLM-MHD equations.
These methods can be applied for the discretization of general systems of conservation laws with or without the addition of non-conservative terms.

The first method, which is an generalization of the method in \cite{Hennemann2020}, uses a first-order finite volume (FV) scheme to stabilize the advective and non-conservative terms of the DGSEM discretization.
The second method uses a reconstruction procedure for the subcell FV scheme that is carefully built to ensure entropy stability.

We have analytically proved that the methods presented in this paper are entropy stable on three-dimensional unstructured curvilinear meshes.
Moreover, we have provided numerical verifications the theoretical properties of the methods and have shown their robustness and accuracy with common benchmark cases and a space physics application.

The methods detailed in this paper are implemented in the open-source framework FLUXO (\url{www.github.com/project-fluxo}).
\section*{Acknowledgments}

This work has received funding from the European Research Council through the ERC Starting Grant “An Exascale aware and Un-crashable Space-Time-Adaptive Discontinuous Spectral Element Solver for Non-Linear Conservation Laws” (Extreme), ERC grant agreement no. 714487 (Gregor J. Gassner and Andrés Rueda-Ramírez).

The authors would like to thank Prof. Joachim Saur and Stephan Schlegel for their helpful insights about the Io case.

\printcredits

\bibliographystyle{model1-num-names}

\bibliography{Biblio}

\section*{Appendices}
\appendix

\section{Entropy Conservative Flux Function} \label{sec:ECflux}

Using the definition of the numerical non-conservative terms, \eqref{eq:DiamondFluxes}, \citet{Derigs2018} algebraically constructed an entropy conserving numerical flux for the GLM-MHD system:
\begin{equation}\label{eq:ECFlux}
\state{f}_1^{\ec}(\state{u}_L,\state{u}_R) =
\begin{pmatrix} 
\rho^{\ln}_{(L,R)} \avg{v_1} \\
\rho^{\ln}_{(L,R)} \avg{v_1}^2 - \avg{B_1}^2 + \overline{p} + \frac{1}{2} \Big(\avg{B_1 B_1} + \avg{B_2 B_2} + \avg{B_3 B_3}\Big)\\ 
\rho^{\ln}_{(L,R)} \avg{v_1} \avg{v_2} - \avg{B_1} \avg{B_2} \\
\rho^{\ln}_{(L,R)} \avg{v_1} \avg{v_3} - \avg{B_1} \avg{B_3} \\
f_{1,5}^{\ec} \\
c_h \avg{\psi} \\
\avg{v_1}\avg{B_2} - \avg{v_2}\avg{B_1}\\
\avg{v_1}\avg{B_3} - \avg{v_3}\avg{B_1}\\
c_h \avg{B_1} 
\end{pmatrix}
\end{equation}
with 
\begin{equation}\label{eq:ECFlux2}
\begin{split}
f_{1,5}^{\ec} = & f_{1,1}^{\ec}\bigg[\frac{1}{2 (\gamma-1) \beta^{\ln}_{(L,R)}} - \frac{1}{2} \left(\avg{v_1^2} + \avg{v_2^2} + \avg{v_3^2}\right) \bigg] + f_{1,2}^{\ec} \avg{v_1} + f_{1,3}^{\ec} \avg{v_2} + f_{1,4}^{\ec} \avg{v_3} \\
 & + f_{1,6}^{\ec} \avg{B_1} + f_{1,7}^{\ec} \avg{B_2} + f_{1,8}^{\ec} \avg{B_3} + f_{1,9}^{\ec} \avg{\psi} - \frac{1}{2} \big(\avg{v_1 B_1^2}+\avg{v_1 B_2^2}+\avg{v_1 B_3^2}\big)\\
 & + \avg{v_1 B_1} \avg{B_1}+\avg{v_2 B_2} \avg{B_1}+\avg{v_3 B_3} \avg{B_1} - c_h \avg{B_1 \psi}
\end{split}
\end{equation}
and
\begin{equation*}
\overline{p} = \frac{\avg{\rho}}{2\avg{\beta}}.
\end{equation*}

\section{One-Dimensional Derivations}

\subsection{Entropy Balance of the Native LGL FV Method (Proof of Lemma \ref{lemma:EntropyFV})} \label{sec:Proof_FV_1D}
The entropy balance within a subcell is obtained by contracting \eqref{eq:FVMHD_D} with the entropy variables to obtain
\begin{equation} \label{eq:EntropyFV1}
J \omega_j \entVar_j^T \dot{\state{u}}_j   = 
J \omega_j \dot{S}_j =
\entVar^T_j  \numfluxb{f}^a_{(j,j-1)} 
- \entVar^T_j  \numfluxb{f}^a_{(j,j+1)} 
+ \entVar^T_j  \numnonconsD{\Jan}_{(j,j-1)}
- \entVar^T_j  \numnonconsD{\Jan}_{(j,j+1)}.
\end{equation}

We now rearrange \eqref{eq:EntropyFV1}, take advantage of the symmetry of the numerical fluxes, and sum and subtract the following terms to separate the right-hand-side in symmetric and antisymmetric contributions
\begin{align} \label{eq:EntropyFV}
J \omega_j \dot{S}_j =&
\entVar^T_j  \numfluxb{f}^a_{(j,j-1)} 
+ \entVar^T_j  \numnonconsD{\Jan}_{(j,j-1)}
- \entVar^T_j  \numfluxb{f}^a_{(j,j+1)} 
- \entVar^T_j  \numnonconsD{\Jan}_{(j,j+1)} \nonumber\\
&
+ \frac{1}{2} \left(
\entVar^T_{j-1}  \numfluxb{f}^a_{(j-1,j)} 
+ \entVar^T_{j-1}  \numnonconsD{\Jan}_{(j-1,j)} 
- \Psi_{j-1} 
- \entVar^T_{j+1}  \numfluxb{f}^a_{(j+1,j)}  
- \entVar^T_{j+1}  \numnonconsD{\Jan}_{(j+1,j)}  
- \Psi_{j+1}
\right)
- \Psi_j
\nonumber\\
&
- \frac{1}{2} \left(
\entVar^T_{j-1}  \numfluxb{f}^a_{(j-1,j)} 
+ \entVar^T_{j-1}  \numnonconsD{\Jan}_{(j-1,j)} 
- \Psi_{j-1} 
- \entVar^T_{j+1}  \numfluxb{f}^a_{(j+1,j)}  
- \entVar^T_{j+1}  \numnonconsD{\Jan}_{(j+1,j)}  
- \Psi_{j+1}
\right)
+ \Psi_j
\nonumber\\
=&
\numflux{f}^S_{(j-1,j)} -\numflux{f}^S_{(j,j+1)} + \frac{1}{2} \left( r_{(j-1,j)} + r_{(N,R)} \right),
\end{align}
where we used the definition of the numerical entropy flux, \eqref{eq:numEntFlux}, and the entropy production, \eqref{eq:EntropyDissipation}, and took advantage of the antisymmetry of the latter.

\subsection{Entropy Balance of the DGSEM (Proof of Lemma \ref{lemma:EntropyDGSEM})} \label{sec:Proof_DGSEM_1D}

We start by rewriting the volume non-conservative term, \eqref{eq:volNonCons_D1}, using the definition of the numerical non-conservative term, \eqref{eq:volNonCons_D1},
\begin{align} \label{eq:volNonCons_D}
\Jan^{*}_{(j,k)} &= 2 \numnonconsD{\Jan}_{(j,k)} - \Jan_j
\end{align}

The advantage of using a discretization on Legendre-Gauss-Lobatto nodes is that the derivative operator fulfills the SBP property \cite{Gassner2013},
\begin{equation} \label{eq:SBPprop}
\mat{Q} + \mat{Q}^T = \mat{B},
\end{equation}
with the boundary matrix
\begin{equation} \label{eq:Bmatrix}
\mat{B} = \mathrm{diag}([-1, 0, \dots, 0, 1]),
\end{equation}
which will be useful in future derivations.
Furthermore, the SBP property leads to the additional properties,
\begin{align}
	\sum_{k=0}^{N} Q_{jk} &= 0, \label{eq:SBP1}\\
	\sum_{j=0}^{N} Q_{jk} &= \delta_{kN} - \delta_{k0} = B_{kk}. \label{eq:SBP2}
\end{align}

The entropy balance for the advective and non-conservative terms is obtained by contracting \eqref{eq:DGSEMMHDadv2} with the entropy variables and integrating over an element,
\begin{align}
\sum_{j=0}^N \omega_j J \dot{S}^a_j 
  =& \sum_{j=0}^N \entVar^T_j \state{F}_j^{a,\DG}
\\=& - \underbrace{\sum_{j=0}^{N} \entVar^T_j 
\left[
\sum_{k=0}^N 2 Q_{jk} \state{f}^{*}_{(j,k)}
-  \delta_{jN} \state{f}^a_N + \delta_{j0} \state{f}^a_0
+ \sum_{k=0}^N Q_{jk} \Jan^{*}_{(j,k)} 
-  \delta_{jN} \Jan_N + \delta_{j0} \Jan_0
\right]
}_{(a):= \text{ volume terms}} \nonumber\\
&
- \underbrace{\sum_{j=0}^{N} \entVar^T_j  \left[
\delta_{jN}  \numfluxb{f}^a_{(N,R)} 
- \delta_{j0}  \numfluxb{f}^a_{(0,L)} 
+ \delta_{jN} \numnonconsD{\Jan}_{(N,R)} 
- \delta_{j0} \numnonconsD{\Jan}_{(0,L)} 
\right] 
}_{\text{surface terms}} \label{eq:}
\end{align}

Let us first manipulate the volume terms:
\begin{align} \label{eq:EntropyDGSEM_1D}
(a)
	=& \sum_{j=0}^{N} \entVar^T_j \sum_{k=0}^N 2 Q_{jk} \state{f}^{*}_{(j,k)} + \entVar^T_0 \state{f}^a_0 - \entVar^T_N \state{f}^a_N  \\
	& + \sum_{j=0}^{N} \entVar^T_j \sum_{k=0}^N Q_{jk} \Jan^{*}_{(j,k)}
	+ \entVar^T_0 \Jan_0 - \entVar^T_N \Jan_N 
	\\ 
\text{(SBP property \eqref{eq:SBPprop})} \qquad =& \sum_{j=0}^{N} \entVar^T_j \sum_{k=0}^N (B_{jk} - Q_{kj} + Q_{jk}) \state{f}^{*}_{(j,k)} 
+ \entVar^T_0 \state{f}^a_0 - \entVar^T_N \state{f}^a_N  
\\
	& + \frac{1}{2} \sum_{j=0}^{N} \entVar^T_j \sum_{k=0}^N (B_{jk} - Q_{kj} + Q_{jk}) \Jan^{*}_{(j,k)} + \entVar^T_0 \Jan_0 - \entVar^T_N \Jan_N
	\\ 
\text{(def. of $\mat{B}$ \eqref{eq:Bmatrix} \& consistent flux \eqref{eq:consistentProp})} \qquad 
=& 
	\sum_{j,k=0}^N \entVar^T_j (- Q_{kj} + Q_{jk}) \state{f}^{*}_{(j,k)}  
	\\
	& + \frac{1}{2} \sum_{j,k=0}^N \entVar^T_j (- Q_{kj} + Q_{jk}) \Jan^{*}_{(j,k)}  
	+ \frac{1}{2} \left( \entVar^T_0 \Jan_0 - \entVar^T_N \Jan_N \right)
	\\ 
\text{(symm. flux \eqref{eq:conservativeProp} \& re-index)} \qquad =&  
	\sum_{j,k=0}^N Q_{jk} (\entVar^T_j - \entVar^T_k) \state{f}^{*}_{(j,k)} \\
	& + \frac{1}{2} \sum_{j,k=0}^N Q_{jk} \left( \entVar^T_j \Jan^{*}_{(j,k)} - \entVar^T_k \Jan^{*}_{(k,j)} \right)  
	+ \frac{1}{2} \left( \entVar^T_0 \Jan_0 - \entVar^T_N \Jan_N \right)
	\\
\text{(Def. of non-cons. 2-point \eqref{eq:volNonCons_D}}
 \qquad =&
	\sum_{j,k=0}^N Q_{jk} (\entVar^T_j - \entVar^T_k) \state{f}^{*}_{(j,k)} 
	+ \sum_{j,k=0}^N Q_{jk} \left( \entVar^T_j \numnonconsD{\Jan}_{(j,k)} - \entVar^T_k \numnonconsD{\Jan}_{(k,j)} \right)  \\
\text{\& SBP properties \eqref{eq:SBP1},\eqref{eq:SBP2})}
 \qquad
	& + \frac{1}{2} \sum_{k=0}^N \entVar^T_k \Jan_k
	\underbrace{\sum_{j=0}^N Q_{jk}}_{= \delta_{kN} - \delta_{k0}} - \frac{1}{2} 
	\sum_{j=0}^N \entVar^T_j \Jan_j 
	\underbrace{\sum_{k=0}^N Q_{jk}}_{=0} + \frac{1}{2} \left( \entVar^T_0 \Jan_0 - \entVar^T_N \Jan_N \right)
	\\ 
\text{(definition of $r$ \eqref{eq:EntropyDissipation})} \qquad 
=&  
	\sum_{j,k=0}^N Q_{jk} (\Psi_j - \Psi_k - r_{(j,k)}) 
	\\
\text{(SBP properties \eqref{eq:SBP1},\eqref{eq:SBP2})} \qquad
=&  
	 \sum_{j=0}^N \Psi_j \underbrace{\sum_{k=0}^N Q_{jk}}_{=0}
	- \sum_{k=0}^N \Psi_k \underbrace{\sum_{j=0}^N Q_{jk}}_{=\delta_{kN} - \delta_{k0}}
	- \sum_{j,k=0}^N Q_{jk} r_{(j,k)}   
	\\ 
=&  
	 \Psi_0 - \Psi_N - \sum_{j,k=0}^N Q_{jk} r_{(j,k)} 
\end{align}

Now, gathering the volume and surface terms we obtain
\begin{equation}
\sum_{j=0}^N \omega_j J \dot{S}^a_j =
\entVar^T_0  \numfluxb{f}^a_{(0,L)} 
+ \entVar^T_0  \numnonconsD{\Jan}_{(0,L)} 
- \Psi_0 
- \entVar^T_N  \numfluxb{f}^a_{(N,R)} 
- \entVar^T_N  \numnonconsD{\Jan}_{(N,R)} 
+ \Psi_N 
+ \sum_{j,k=0}^N Q_{jk} r_{(j,k)}.
\end{equation}
We now sum and subtract the following outer terms,
\begin{align} \label{eq:ManupulationBorders}
\sum_{j=0}^N \omega_j J \dot{S}^a_j =&
\entVar^T_0  \numfluxb{f}^a_{(0,L)} 
+ \entVar^T_0  \numnonconsD{\Jan}_{(0,L)} 
- \Psi_0 
- \entVar^T_N  \numfluxb{f}^a_{(N,R)} 
- \entVar^T_N  \numnonconsD{\Jan}_{(N,R)} 
+ \Psi_N 
+ \sum_{j,k=0}^N Q_{jk} r_{(j,k)} 
\nonumber\\
&
+ \frac{1}{2} \left(
\entVar^T_L  \numfluxb{f}^a_{(L,0)} 
+ \entVar^T_L  \numnonconsD{\Jan}_{(L,0)} 
- \Psi_L 
- \entVar^T_R  \numfluxb{f}^a_{(R,N)} 
- \entVar^T_R  \numnonconsD{\Jan}_{(R,N)} 
+ \Psi_R
\right)
\nonumber\\
&
- \frac{1}{2} \left(
\entVar^T_L  \numfluxb{f}^a_{(L,0)} 
+ \entVar^T_L  \numnonconsD{\Jan}_{(L,0)} 
- \Psi_L 
- \entVar^T_R  \numfluxb{f}^a_{(R,N)} 
- \entVar^T_R  \numnonconsD{\Jan}_{(R,N)} 
+ \Psi_R
\right),
\end{align}
and simplify using the definitions of the numerical entropy flux, \eqref{eq:numEntFlux}, and the entropy production, \eqref{eq:EntropyDissipation}, to obtain
\begin{equation} \label{eq:EntropyDGSEMadv}
\dot{S}^a =
\numflux{f}^S_{(0,L)} -\numflux{f}^S_{(N,R)} + \frac{1}{2} \left( r_{(L,0)} + r_{(N,R)} \right) + \sum_{j,k=0}^N Q_{jk} r_{(j,k)},
\end{equation}
which completes the proof.

\section{Three-Dimensional Derivations} \label{sec:3D_Derivations}

\subsection{Three-Dimensional DGSEM for Curvilinear Meshes}

\subsubsection{Discretization}

The DGSEM discretization of the advective and non-conservative terms can be extended from 1D to 3D curvilinear meshes using tensor-product basis expansions.
The extended version of \eqref{eq:DGSEMMHDadv2} reads
\begin{equation}
J_{ijk} \omega_{ijk} \dot{\state{u}}^{\DG}_{ijk} = \state{F}^{a,\DG}_{ijk},
\end{equation}
where
\begin{align}  \label{eq:DGSEM_MHD_3D}
\state{F}^{a,\DG}_{ijk} := 
&
{\color{red}
 \omega_{jk} \left( - 2 \sum_{m=0}^{N} Q_{im} \tilde{\state{f}}^{1*}_{(i,m)jk} 
- \sum_{m=0}^{N} Q_{im} \tilde{\Jan}^{1*}_{(i,m)jk} 
-  
  \delta_{i0} \left[ (\blocktensor{f} + \JanVec) \cdot J\vec{a}^1 \right]_{0jk}
+ \delta_{iN} \left[ (\blocktensor{f} + \JanVec)\cdot J\vec{a}^1 \right]_{Njk} \right) 
}
\nonumber\\ 
+& \omega_{ik} \left( - 2\sum_{m=0}^{N} Q_{jm} \tilde{\state{f}}^{2*}_{i(j,m)k}  
- \sum_{m=0}^{N} Q_{jm} \tilde{\Jan}^{2*}_{i(j,m)k} 
- 
  \delta_{j0} \left[ (\blocktensor{f} + \JanVec) \cdot J\vec{a}^2 \right]_{i0k}
+ \delta_{jN} \left[ (\blocktensor{f} + \JanVec) \cdot J\vec{a}^2 \right]_{iNk}  \right) 
\nonumber\\ 
+&\omega_{ij} \left( - 2\sum_{m=0}^{N} Q_{km} \tilde{\state{f}}^{3*}_{ij(k,m)} 
- \sum_{m=0}^{N} Q_{km} \tilde{\Jan}^{3*}_{ij(k,m)} 
- 
  \delta_{k0} \left[ (\blocktensor{f} + \JanVec) \cdot J\vec{a}^3 \right]_{ij0}
+ \delta_{kN} \left[ (\blocktensor{f} + \JanVec) \cdot J\vec{a}^3 \right]_{ijN} \right)
\nonumber\\
+&
{\color{blue} 
\omega_{jk} \left(
  \delta_{i0} \left[\numfluxb{f}^a_{(0,L)jk} + \numnonconsD{\Jan}_{(0,L)jk} \right]
- \delta_{iN} \left[\numfluxb{f}^a_{(N,R)jk} + \numnonconsD{\Jan}_{(N,R)jk} \right] \right)
}
\nonumber\\
+& \omega_{ik} \left(
  \delta_{j0} \left[\numfluxb{f}^a_{i(0,L)k} + \numnonconsD{\Jan}_{i(0,L)k} \right]
- \delta_{jN} \left[\numfluxb{f}^a_{i(N,R)k} + \numnonconsD{\Jan}_{i(N,R)k} \right] \right)
\nonumber\\
+& \omega_{ij} \left(
  \delta_{k0} \left[\numfluxb{f}^a_{ij(0,L)} + \numnonconsD{\Jan}_{ij(0,L)} \right]
- \delta_{kN} \left[\numfluxb{f}^a_{ij(N,R)} + \numnonconsD{\Jan}_{ij(N,R)} \right] \right).
\end{align}

The mapping Jacobians, $J_{ijk}$, which may now be different at each degree of freedom of the element, and the contravariant basis vectors, $\vec{a}^m_{ijk}=\Nabla \xi^m$, define the mapping from reference space to physical space, $(\xi^1,\xi^2,\xi^3) \in [-1,1]^3 \rightarrow (x,y,z) \in \Omega$. 

Moreover, the following new conventions are used:

\begin{itemize}
\item The two- and three-dimensional quadrature weights are defined from the one-dimensional weights as
\begin{equation}
\omega_{ij} := \omega_i \omega_j, ~~~~~ \omega_{ijk} := \omega_i \omega_j \omega_k.
\end{equation}


\item The term $\JanVec$ is defined as
\begin{equation}
\JanVec := \phiMHD \vec{B} + \phiGLM \psi.
\end{equation}

\item The volume numerical two-point fluxes are defined with the metric terms as
\begin{align}
\tilde{\state{f}}^{1*}_{(i,m)jk} &:= \blocktensor{f}^{*}(\state{u}_{ijk}, \state{u}_{mjk}) \cdot \avg{J\vec{a}^1}_{(i,m)jk}, ~~~~~
\tilde{\state{f}}^{2*}_{i(j,m)k} := \blocktensor{f}^{*}(\state{u}_{ijk}, \state{u}_{imk}) \cdot \avg{J\vec{a}^2}_{i(j,m)k}, \nonumber \\
\tilde{\state{f}}^{3*}_{ij(k,m)} &:= \blocktensor{f}^{*}(\state{u}_{ijk}, \state{u}_{ijm}) \cdot \avg{J\vec{a}^3}_{ij(k,m)},
\end{align}
as it is conventionally done in the DGSEM literature \cite{Gassner2018,Manzanero2020}.
They fulfill the symmetry and consistency property.

\item The volume numerical two-point non-conservative terms are defined with the metric terms in accordance with the definitions of \citet{Bohm2018},
\begin{align}
\tilde{\Jan}^{1*}_{(i,m)jk} &:= \phiMHD_{ijk} \vec{B}_{mjk} \cdot \avg{J\vec{a}^1}_{(i,m)jk} + \phiGLM_{ijk} \cdot \left( J\vec{a} \right)_{ijk} \psi_{mjk}, \nonumber\\
\tilde{\Jan}^{2*}_{i(j,m)k} &:= \phiMHD_{ijk} \vec{B}_{imk} \cdot \avg{J\vec{a}^2}_{i(j,m)k} + \phiGLM_{ijk} \cdot \left( J\vec{a} \right)_{ijk} \psi_{imk}, \nonumber\\
\tilde{\Jan}^{3*}_{ij(k,m)} &:= \phiMHD_{ijk} \vec{B}_{ijm} \cdot \avg{J\vec{a}^3}_{ij(k,m)} + \phiGLM_{ijk} \cdot \left( J\vec{a} \right)_{ijk} \psi_{ijm}.
\end{align}
They fulfill the consistency property.

\item Remark that it is possible to express the unit outward-pointing normal vectors in terms of the metric terms on the element boundaries,
\begin{align} \label{eq:NormalVectors}
\vec{n}_{0jk} &= -\frac{(J\vec{a}^1)_{0jk}} {\norm{J\vec{a}^1}_{0jk}}, &
\vec{n}_{i0k} &= -\frac{(J\vec{a}^2)_{i0k}} {\norm{J\vec{a}^2}_{i0k}}, &
\vec{n}_{ij0} &= -\frac{(J\vec{a}^3)_{ij0}} {\norm{J\vec{a}^3}_{ij0}}, \nonumber\\
\vec{n}_{Njk} &= \frac{(J\vec{a}^1)_{Njk}} {\norm{J\vec{a}^1}_{Njk}}, &
\vec{n}_{iNk} &= \frac{(J\vec{a}^2)_{iNk}} {\norm{J\vec{a}^2}_{iNk}}, &
\vec{n}_{ijN} &= \frac{(J\vec{a}^3)_{ijN}} {\norm{J\vec{a}^3}_{ijN}}, 
\end{align}
to obtain a more conventional form of the DGSEM discretization. 
We keep the form in \eqref{eq:DGSEM_MHD_3D}, as it will be useful for future derivations.

\item For convenience, we define the "surface" numerical flux in a general fashion as
\begin{align}
{\numfluxb{f}^a}_{(i,m)jk} &:= \hat{\blocktensor{f}}^a(\state{u}_{ijk}, \state{u}_{mjk}) \cdot \avg{J\vec{a}^1}_{(i,m)jk}, ~~~~~
{\numfluxb{f}^a}_{i(j,m)k} := \hat{\blocktensor{f}}^a(\state{u}_{ijk}, \state{u}_{imk}) \cdot \avg{J\vec{a}^2}_{i(j,m)k}, \nonumber \\
{\numfluxb{f}^a}_{ij(k,m)} &:= \hat{\blocktensor{f}}^a(\state{u}_{ijk}, \state{u}_{ijm}) \cdot \avg{J\vec{a}^3}_{ij(k,m)},
\end{align}
where $\hat{\blocktensor{f}}(\cdot ,\cdot )$ is an approximate solution to the Riemann problem between two states.
Note that on an element boundary, our interpretation of the numerical flux reduces to the conventional solution of the 1D Riemann problem in normal direction multiplied by the discrete surface Jacobian, e.g.
\begin{align*}
{\numfluxb{f}^a}_{(N,R)jk} &= \hat{\blocktensor{f}}^a(\state{u}_{Njk}, \state{u}_{Rjk}) \cdot \avg{J\vec{a}^1}_{(N,R)jk} \\
 &= \hat{\blocktensor{f}}^a(\state{u}_{Njk}, \state{u}_{Rjk}) \cdot \vec{n}_{Njk} \norm{J\vec{a}^1}_{Njk} \\
 &= 
 \numfluxb{f}^a(\state{u}_{Njk}, \state{u}_{Rjk}; \vec{n}_{Njk} )
 \underbrace{\norm{J\vec{a}^1}_{Njk}}_{:= \mathrm{Surface~Jacobian}} ,
\end{align*}
as the metric terms are continuous across element boundaries.
The "surface" numerical fluxes fulfill the symmetry and consistency properties.

\item For convenience, we define the "surface" numerical non-conservative terms in an general fashion,
\begin{equation} \label{eq:numNonCons3D}
\numnonconsD{\Jan}_{(i,m)jk} := 
\frac{1}{2} \left[ (\vec{B}\cdot J\vec{a}^1)_{ijk} + \vec{B}_{mjk} \cdot \avg{J\vec{a}^1}_{(i,m)jk} \right] \phiMHD_{ijk}  +
\phiGLM_{ijk} \cdot (J\vec{a}^1)_{ijk} \avg{\psi}_{(i,m)jk},
\end{equation}
and the other directions in an analogous way.
Note that on an element boundary, our interpretation of the numerical non-conservative term reduces to the definition by \citet{Bohm2018} multiplied by the discrete surface Jacobian,
\begin{equation}
\numnonconsD{\Jan}_{(N,R)jk} = 
\underbrace{\norm{J\vec{a}^1}_{Njk}}_{:= \mathrm{Surface~Jacobian}}
\vec{n}_{Njk} 
\cdot 
\left[
\avg{\vec{B}}_{(N,R)jk}  \phiMHD_{ijk}  +
\phiGLM_{ijk} \avg{\psi}_{(i,m)jk}
\right].
\end{equation}
Clearly, the "surface" numerical non-conservative terms fulfill the consistency property, but not the symmetry property.

Our definition of the "surface" numerical non-conservative terms, \eqref{eq:numNonCons3D}, allows us to mimic the identity \eqref{eq:volNonCons_D} in 3D,
\begin{equation} \label{eq:numNonConsIdent3D}
\tilde{\Jan}^{1*}_{(i,m)jk} =
2\numnonconsD{\Jan}_{(i,m)jk}
- \JanVec_{ijk} \cdot (J\vec{a}^1)_{ijk}.
\end{equation}

\end{itemize}

\subsubsection{Entropy Balance}

We start by defining the numerical entropy flux and the entropy production that are compatible with our high-order DGSEM on 3D curvilinear meshes.

\begin{definition}[Numerical entropy flux for the 3D DGSEM]
The numerical entropy flux from the degree of freedom $ijk$ to $mjk$ is defined as
\begin{equation} \label{eq:numEntFlux_3D} 
\numflux{f}^S_{(i,m)jk} = 
\avg{\entVar}_{(i,m)jk}^T \numfluxb{f}^a_{(i,m)jk} 
+ \frac{1}{2} \entVar^T_{ijk} \numnonconsD{\stateG{\Phi}}_{(i,m)jk}
+ \frac{1}{2} \entVar^T_{mjk} \numnonconsD{\stateG{\Phi}}_{(m,i)jk}
- \avg{J\vec{a}^1}_{(i,m)jk} \cdot \avg{\vec{\Psi}}_{(i,m)jk},
\end{equation}
which fulfills the symmetric conservative property \eqref{eq:conservativeProp}, and can be used with both the volume numerical flux and the surface numerical flux.
\end{definition}

\begin{definition}[Entropy production for the 3D DGSEM]
The entropy production on an interface between the degrees of freedom $j$ and $k$ is defined as
\begin{equation} \label{eq:EntropyDissipation_3D}
r_{(i,m)jk} =
\jump{\entVar}_{(i,m)jk}^T 
\numfluxb{f}^a_{(i,m)jk}
+ \entVar^T_{mjk} \numnonconsD{\stateG{\Phi}}_{(m,i)jk}
- \entVar^T_{ijk} \numnonconsD{\stateG{\Phi}}_{(i,m)jk}
- \avg{J\vec{a}^1}_{(i,m)jk} \cdot \jump{\vec{\Psi}}_{(i,m)jk},
\end{equation}
which can be used with both the volume numerical flux and the surface numerical flux.
\end{definition}

\begin{lemma} \label{lemma:EntropyDGSEM_3D}
The semi-discrete entropy balance of the 3D DGSEM discretization on curvilinear meshes of the GLM-MHD equations, \eqref{eq:DGSEM_MHD_3D}, integrating over an entire element, reads
\begin{align*}
\sum_{i=0}^N \entVar_{ijk}^T \state{F}_{ijk}^{a,\DG}
=&
	+ \sum_{j,k=0}^N \omega_{jk} \left(
	\numflux{f}^S_{(0,L)jk} -\numflux{f}^S_{(N,R)jk} + \frac{1}{2} \left( r_{(L,0)jk} + r_{(N,R)jk} \right) 
	\right)
	\\&
	+ \sum_{i,k=0}^N \omega_{ik} \left(
	\numflux{f}^S_{i(0,L)k} -\numflux{f}^S_{i(N,R)k} + \frac{1}{2} \left( r_{i(L,0)k} + r_{i(N,R)k} \right) 
	\right)
	\\&
	+ \sum_{i,j=0}^N \omega_{ij} \left(
	\numflux{f}^S_{ij(0,L)} -\numflux{f}^S_{ij(N,R)} + \frac{1}{2} \left( r_{ij(L,0)} + r_{ij(N,R)} \right) 
	\right)
	\\
	&
	+ \sum_{i,j,k,m=0}^N \omega_{ijk}
	\left(
	D_{im}  r_{(i,m)jk} + D_{jm} r_{i(j,m)k} + D_{km} r_{ij(k,m)}
	\right),
\end{align*}
where the numerical entropy flux and the entropy production are consistent with the FV definitions,  \eqref{eq:numEntFlux_3D} and \eqref{eq:EntropyDissipation_3D}, respectively.
\end{lemma}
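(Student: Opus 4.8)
The plan is to obtain Lemma~\ref{lemma:EntropyDGSEM_3D} as the tensor-product lift of the one-dimensional result of Lemma~\ref{lemma:EntropyDGSEM}, whose proof is carried out in Appendix~\ref{sec:Proof_DGSEM_1D}; the only genuinely new ingredient is the bookkeeping of the curvilinear metric terms. First I would contract \eqref{eq:DGSEM_MHD_3D} with $\entVar_{ijk}$ and sum over all nodes $i,j,k$ of the element. The right-hand side then splits into three structurally identical volume blocks, one per reference direction $\xi^1,\xi^2,\xi^3$, and three structurally identical surface blocks, so it suffices to treat the $\xi^1$ contributions in detail; the $\xi^2$ and $\xi^3$ contributions follow by relabelling indices.

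For the $\xi^1$ volume block I would reproduce, essentially line by line, the chain of manipulations of Appendix~\ref{sec:Proof_DGSEM_1D}, carrying the contravariant metric vector $\avg{J\vec{a}^1}_{(i,m)jk}$ along as a coefficient frozen inside the two-point flux. Concretely: (i) apply the SBP property \eqref{eq:SBPprop} to split $Q_{im}=B_{im}-Q_{mi}$; (ii) use the metric identity \eqref{eq:numNonConsIdent3D} to rewrite the volume numerical non-conservative term as $2\,\numnonconsD{\Jan}_{(i,m)jk}-\JanVec_{ijk}\cdot(J\vec{a}^1)_{ijk}$, observing that the node-local piece $\JanVec_{ijk}\cdot(J\vec{a}^1)_{ijk}$ does not depend on the summation index $m$ and hence drops out against $\sum_m Q_{im}=0$; (iii) use the symmetry \eqref{eq:conservativeProp} of $\blocktensor{f}^{*}$ and of $\avg{J\vec{a}^1}_{(i,m)jk}$ in $(i,m)$, together with a relabelling of the double sum, to reduce the remainder to $\sum_{i,m}\omega_{jk}Q_{im}\big[\jump{\entVar}^T\numfluxb{f}^a_{(i,m)jk}+\entVar_{ijk}^T\numnonconsD{\Jan}_{(i,m)jk}-\entVar_{mjk}^T\numnonconsD{\Jan}_{(m,i)jk}\big]$; and (iv) invoke the generalized Tadmor condition \eqref{eq:TadmorNonCons} contracted with the frozen direction $\avg{J\vec{a}^1}_{(i,m)jk}$ — which is precisely the assertion that the one-dimensional two-point flux and non-conservative term are entropy conservative in that direction — to replace each bracket by $\avg{J\vec{a}^1}_{(i,m)jk}\cdot\jump{\vec{\Psi}}_{(i,m)jk}-r_{(i,m)jk}$ with $r$ as in \eqref{eq:EntropyDissipation_3D}. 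Since $\omega_{jk}Q_{im}=\omega_{ijk}D_{im}$, the $r$ terms produced this way are exactly the $D_{im}\,r_{(i,m)jk}$ contributions in the statement.

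What remains are the metric-weighted potential contributions $\sum_{i,m}\omega_{jk}Q_{im}\,\avg{J\vec{a}^1}_{(i,m)jk}\cdot\jump{\vec{\Psi}}_{(i,m)jk}$ together with their $\xi^2,\xi^3$ analogues. Expanding $\avg{J\vec{a}^1}_{(i,m)jk}=\tfrac12[(J\vec{a}^1)_{ijk}+(J\vec{a}^1)_{mjk}]$ and using the SBP sum identities \eqref{eq:SBP1}, \eqref{eq:SBP2}, all the explicit boundary pieces cancel and this collapses, after summing over $j,k$, to a term of the form $\sum_{ijk}\omega_{ijk}\,(J\vec{a}^1)_{ijk}\cdot(\partial_{\xi^1}\vec{\Psi})_{ijk}$, where $\partial_{\xi^1}\vec{\Psi}$ denotes the discrete derivative $\sum_m D_{im}\vec{\Psi}_{mjk}$. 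Applying the summation-by-parts property \eqref{eq:SBPprop} once more — now with the metric vector in the role of the second factor — produces the element-surface potential fluxes $[(J\vec{a}^1)\cdot\vec{\Psi}]_{Njk}-[(J\vec{a}^1)\cdot\vec{\Psi}]_{0jk}$ (and the analogous faces in the other two directions), plus a leftover $\sum_{ijk}\omega_{ijk}\,\vec{\Psi}_{ijk}\cdot\big[\partial_{\xi^1}(J\vec{a}^1)+\partial_{\xi^2}(J\vec{a}^2)+\partial_{\xi^3}(J\vec{a}^3)\big]_{ijk}$ that vanishes by the discrete metric identities, i.e.\ by the free-stream preservation of the scheme, cf.~\cite{Hennemann2020}. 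Finally, adding and subtracting the outer-state boundary contributions exactly as in \eqref{eq:ManupulationBorders}, the surface blocks together with the telescoped potential fluxes assemble, face by face, into the numerical entropy fluxes \eqref{eq:numEntFlux_3D} and surface entropy productions \eqref{eq:EntropyDissipation_3D} on the six element faces, which is the claimed identity.

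I expect the delicate point — and the only real departure from the one-dimensional argument — to be this last step: the metric-weighted potential terms do \emph{not} telescope within a single reference direction, and one must recognise that the non-telescoping remainder from the three directions is exactly what the discrete metric identities annihilate. Everything else is a mechanical transcription of the proof of Lemma~\ref{lemma:EntropyDGSEM}, with $\state{f}^{*}$ replaced by $\blocktensor{f}^{*}\cdot\avg{J\vec{a}^m}$ and the metric-aware volume and surface non-conservative two-point terms \eqref{eq:numNonConsIdent3D}, \eqref{eq:numNonCons3D} in place of their one-dimensional counterparts.
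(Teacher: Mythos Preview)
Your plan is essentially the paper's own proof: work one reference direction at a time, replay the one-dimensional chain of SBP manipulations from Appendix~\ref{sec:Proof_DGSEM_1D} with the contravariant metric averages $\avg{J\vec{a}^m}$ carried along, use the identity~\eqref{eq:numNonConsIdent3D} to bring the non-conservative volume term into ``diamond'' form, and finish by invoking the discrete metric identities to kill the non-telescoping residual $\sum_{ijk}\omega_{ijk}\,\vec{\Psi}_{ijk}\cdot\sum_m\big(D_{im}(J\vec{a}^1)_{mjk}+D_{jm}(J\vec{a}^2)_{imk}+D_{km}(J\vec{a}^3)_{ijm}\big)$.

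One small correction to your bookkeeping: after expanding $\avg{J\vec{a}^1}_{(i,m)jk}$ and applying~\eqref{eq:SBP1}--\eqref{eq:SBP2}, the boundary pieces do \emph{not} cancel, and the residual volume sum that emerges is $\sum_{i,m}Q_{im}\,(J\vec{a}^1)_{mjk}\cdot\vec{\Psi}_{ijk}$ (derivative on the metric, as required for the discrete metric identities), together with the surface potential fluxes $[(J\vec{a}^1)\cdot\vec{\Psi}]_{0jk}-[(J\vec{a}^1)\cdot\vec{\Psi}]_{Njk}$; no additional SBP step is needed to swap the roles of $\vec{\Psi}$ and $J\vec{a}^1$. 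These surface potential fluxes then combine with the add-and-subtract step on the outer states, exactly as in~\eqref{eq:ManupulationBorders}, to assemble $\numflux{f}^S$ and $r$ on each face via~\eqref{eq:numEntFlux_3D} and~\eqref{eq:EntropyDissipation_3D}. With that adjustment your outline matches the paper's argument line for line.
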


\begin{proof}

Let us first compute the integral of the $\xi^1$ volume terms (marked in red in \eqref{eq:DGSEM_MHD_3D}) along the $\xi^1$ direction (note that we are scaling with $-1/\omega_{jk}$ for convenience):
\begin{align*}
- \frac{1}{\omega_{jk}}\sum_{i=0}^N \entVar_{ijk}^T {\color{red} \state{F}_{\elemDom,ijk}^{a,\DG,\xi^1}}
	=& \sum_{i=0}^{N} \entVar^T_{ijk} \sum_{m=0}^N 2 Q_{im} \contSt{f}^{*}_{(i,m)jk} 
	+ \left( \entVar^T ( \blocktensor{f}^a + \JanVec) \cdot J\vec{a}^1 \right)_{0jk}  \\
	& + \sum_{i=0}^{N} \entVar^T_{ijk} \sum_{m=0}^N Q_{im} \tilde{\Jan}^{*}_{(i,m)jk}
	- \left( \entVar^T ( \blocktensor{f}^a + \JanVec ) \cdot J\vec{a}^1 \right)_{Njk} 
	\\ 
\text{(SBP property \eqref{eq:SBPprop})} \qquad =& \sum_{i=0}^{N} \entVar^T_{ijk} \sum_{m=0}^N (B_{im} - Q_{mi} + Q_{im}) \contSt{f}^{*}_{(i,m)jk} 
	+ \left( \entVar^T ( \blocktensor{f}^a + \JanVec ) \cdot J\vec{a}^1 \right)_{0jk}
\\
	& + \frac{1}{2} \sum_{i=0}^{N} \entVar^T_{ijk} \sum_{m=0}^N (B_{im} - Q_{mi} + Q_{im}) \tilde{\Jan}^{*}_{(i,m)jk} 
	\\
	&- \left( \entVar^T ( \blocktensor{f}^a + \JanVec ) \cdot J\vec{a}^1 \right)_{Njk} 
	\\ 
\text{(def. of $\mat{B}$ \eqref{eq:Bmatrix} \& consistent flux \eqref{eq:consistentProp})} \qquad 
=& 
	\sum_{i,m=0}^N \entVar^T_{ijk} (- Q_{mi} + Q_{im}) \contSt{f}^{*}_{(i,m)jk}  
	+ \frac{1}{2} \left( \entVar^T \JanVec \cdot J\vec{a}^1 \right)_{0jk}   
	\\
	& + \frac{1}{2} \sum_{i,m=0}^N \entVar^T_{ijk} (- Q_{mi} + Q_{im}) \tilde{\Jan}^{*}_{(i,m)jk}  
	- \frac{1}{2} \left( \entVar^T \JanVec \cdot J\vec{a}^1 \right)_{Njk} 
	\\ 
\text{(symm. flux \eqref{eq:conservativeProp} \& re-index)} \qquad =&  
	\sum_{i,m=0}^N Q_{im} (\entVar^T_{ijk} - \entVar^T_{mjk}) \state{f}^{*}_{(i,m)jk} 
	+ \frac{1}{2} \left( \entVar^T \JanVec \cdot J\vec{a}^1 \right)_{0jk} 
	\\
	& + \frac{1}{2} \sum_{i,m=0}^N Q_{im} (\entVar^T_{ijk} \tilde{\Jan}^{*}_{(i,m)jk} - \entVar^T_{mjk} \tilde{\Jan}^{*}_{(m,i)jk})   
	- \frac{1}{2} \left( \entVar^T \JanVec \cdot J\vec{a}^1 \right)_{Njk} 
	\\
\text{(Def. of non-cons. 2-point \eqref{eq:numNonConsIdent3D}}
 \qquad =&
	\sum_{i,m=0}^N Q_{im} (\entVar^T_{ijk} - \entVar^T_{mjk}) \state{f}^{*}_{(i,m)jk} 
	+ \sum_{i,m=0}^N Q_{im} \left( \entVar^T_{ijk} \numnonconsD{\Jan}_{(i,m)jk} - \entVar^T_{mjk} \numnonconsD{\Jan}_{(m,i)jk} \right)  \\
\text{\& SBP properties \eqref{eq:SBP1},\eqref{eq:SBP2})}
 \qquad
	& + \frac{1}{2} \sum_{m=0}^N \left(\entVar^T \JanVec \cdot J\vec{a}^1 \right)_{mjk}
	\underbrace{\sum_{i=0}^N Q_{im}}_{= \delta_{mN} - \delta_{m0}} - \frac{1}{2} 
	\sum_{i=0}^N \left( \entVar^T \JanVec \cdot J\vec{a}^1 \right)_{ijk} 
	\underbrace{\sum_{m=0}^N Q_{im}}_{=0} 
	\\
	&+ \frac{1}{2} \left( \entVar^T \JanVec \cdot J\vec{a}^1 \right)_{0jk}
	- \frac{1}{2} \left( \entVar^T \JanVec \cdot J\vec{a}^1 \right)_{Njk} 
	\\ 
\text{(definition of $r$ \eqref{eq:EntropyDissipation_3D})} \qquad 
=&  
	\sum_{i,m=0}^N Q_{im} \left( \avg{J\vec{a}^1}_{(i,m)jk} \cdot (\vec{\Psi}_{ijk} - \vec{\Psi}_{mjk}) - r_{(i,m)jk} \right) 
	\\
\text{(Split the sum \& SBP properties} \qquad
=&  
	 \frac{1}{2} \sum_{i=0}^N (J\vec{a}^1)_{ijk} \cdot \vec{\Psi}_{ijk} \underbrace{\sum_{m=0}^N Q_{im}}_{=0}
	 +\frac{1}{2} \sum_{i,m=0}^N Q_{im} (J\vec{a}^1)_{mjk} \cdot \vec{\Psi}_{ijk} 
	 \\
\text{ \eqref{eq:SBP1},\eqref{eq:SBP2})} \qquad	
&- \frac{1}{2} \sum_{m=0}^N (J\vec{a}^1)_{mjk} \cdot \vec{\Psi}_{mjk} \underbrace{\sum_{i=0}^N Q_{im}}_{=\delta_{mN} - \delta_{m0}}
	-\frac{1}{2} \sum_{i,m=0}^N Q_{im} (J\vec{a}^1)_{ijk} \cdot \vec{\Psi}_{mjk} \\ 
	&- \sum_{i,m=0}^N Q_{im} r_{(i,m)jk}   
	\\ 
\text{(Simplify and re-index)} \qquad
=&
	 \frac{1}{2} \sum_{i,m=0}^N (Q_{im} - Q_{mi}) (J\vec{a}^1)_{mjk} \cdot \vec{\Psi}_{ijk} 
	 \\
	 &+\frac{1}{2} (J\vec{a}^1)_{0jk} \cdot \vec{\Psi}_{0jk}
	 -\frac{1}{2} (J\vec{a}^1)_{Njk} \cdot \vec{\Psi}_{Njk}
	 - \sum_{i,m=0}^N Q_{im} r_{(i,m)jk}   
\\
\text{(SBP property \eqref{eq:SBPprop})} \qquad
=&
	 {-\frac{1}{2} \sum_{i,m=0}^N B_{im} (J\vec{a}^1)_{mjk} \cdot \vec{\Psi}_{ijk}}
	 + \sum_{i,m=0}^N Q_{im} (J\vec{a}^1)_{mjk} \cdot \vec{\Psi}_{ijk} 
	 \\
	 &+{\frac{1}{2} (J\vec{a}^1)_{0jk} \cdot \vec{\Psi}_{0jk}}
	 -{\frac{1}{2} (J\vec{a}^1)_{Njk} \cdot \vec{\Psi}_{Njk}}
	 - \sum_{i,m=0}^N Q_{im} r_{(i,m)jk}   
\\
\text{(Simplify)} \qquad
=&  
	 \sum_{i,m=0}^N Q_{im} (J\vec{a}^1)_{mjk} \cdot \vec{\Psi}_{ijk}  - \sum_{i,m=0}^N Q_{im} r_{(i,m)jk}   
	 \\
	 &+{ (J\vec{a}^1)_{0jk} \cdot \vec{\Psi}_{0jk}}
	 - { (J\vec{a}^1)_{Njk} \cdot \vec{\Psi}_{Njk}} \numberthis \label{eq:EntropyDGSEM_Volume3D}
\end{align*}

We now compute the integral of the $\xi^1$ surface terms (marked in blue in \eqref{eq:DGSEM_MHD_3D}) along the $\xi^1$ direction,
\begin{align*}
\sum_{i=0}^N \entVar_{ijk}^T 
{\color{blue} \state{F}_{\partial \elemDom,ijk}^{a,\DG,\xi^1}}
	=& \omega_{jk} \left(
  \entVar_{0jk}^T \left[\numfluxb{f}_{(0,L)jk} + \numnonconsD{\Jan}_{(0,L)jk} \right]
- \entVar_{Njk}^T \left[\numfluxb{f}_{(N,R)jk} + \numnonconsD{\Jan}_{(N,R)jk} \right] \right)
\\
\text{(Sum zero)}
	=& \omega_{jk} \left(
	\entVar_{0jk}^T \left[\numfluxb{f}_{(0,L)jk} + 
	\numnonconsD{\Jan}_{(0,L)jk} \right]
	- \entVar_{Njk}^T \left[\numfluxb{f}_{(N,R)jk} +
	\numnonconsD{\Jan}_{(N,R)jk} \right] \right)	
	\\
	&
	+ \frac{\omega_{jk}}{2} 
	\left(
	\entVar^T_{Ljk}  \numfluxb{f}^a_{(L,0)jk} 
	+ \entVar^T_{Ljk}  \numnonconsD{\Jan}_{(L,0)jk} 
	- (J\vec{a}^1)_{0jk} \cdot \vec{\Psi}_{Ljk}
	\right)
	\\
	&
	- \frac{\omega_{jk}}{2} \left(
	\entVar^T_{Ljk}  \numfluxb{f}^a_{(L,0)jk} 
	+ \entVar^T_{Ljk}  \numnonconsD{\Jan}_{(L,0)jk} 
	- (J\vec{a}^1)_{0jk} \cdot \vec{\Psi}_{Ljk} 
	 \right)
	 \\
	&
	+ \frac{\omega_{jk}}{2} 
	\left(
	- \entVar^T_{Rjk}  \numfluxb{f}^a_{(R,N)jk} 
	- \entVar^T_{Rjk}  \numnonconsD{\Jan}_{(R,N)jk} 
	+ (J\vec{a}^1)_{Njk} \cdot \vec{\Psi}_{Rjk}
	\right)
	\\
	 &
	- \frac{\omega_{jk}}{2} 
	\left(
	- \entVar^T_{Rjk}  \numfluxb{f}^a_{(R,N)jk} 
	- \entVar^T_{Rjk}  \numnonconsD{\Jan}_{(R,N)jk} 
	+ (J\vec{a}^1)_{Njk} \cdot \vec{\Psi}_{Rjk}
	\right)
\\
\text{(Use \eqref{eq:numEntFlux_3D},\eqref{eq:EntropyDissipation_3D})}
	=& 
	\omega_{jk} \left(
	\numflux{f}^S_{(0,L)jk} -\numflux{f}^S_{(N,R)jk} + \frac{1}{2} \left( r_{(L,0)jk} + r_{(N,R)jk} \right) 
	+{ (J\vec{a}^1)_{0jk} \cdot \vec{\Psi}_{0jk}}
	-{ (J\vec{a}^1)_{Njk} \cdot \vec{\Psi}_{Njk}} \right),
\numberthis \label{eq:EntropyDGSEM_Surf3D}
\end{align*}
and gather the integral of the volume \eqref{eq:EntropyDGSEM_Volume3D} and surface \eqref{eq:EntropyDGSEM_Surf3D} terms to obtain
\begin{align*}
\sum_{i=0}^N \entVar_{ijk}^T 
\left(
{\color{red} \state{F}_{\elemDom,ijk}^{a,\DG,\xi^1}}+
{\color{blue} \state{F}_{\partial \elemDom,ijk}^{a,\DG,\xi^1}}
\right)
	=&
\omega_{jk} \left(
	\numflux{f}^S_{(0,L)jk} -\numflux{f}^S_{(N,R)jk} + \frac{1}{2} \left( r_{(L,0)jk} + r_{(N,R)jk} \right) 
	+ \sum_{i,m=0}^N Q_{im} r_{(i,m)jk}
	\right)
	\\
	&
	- \omega_{jk} \sum_{i,m=0}^N Q_{im} (J\vec{a}^1)_{mjk} \cdot \vec{\Psi}_{ijk}.
\numberthis \label{eq:EntropyDGSEM_Xi_3D}
\end{align*}

Summing \eqref{eq:EntropyDGSEM_Xi_3D} over the remaining directions ($\xi^2$ and $\xi^3$) and adding the integral in the three directions of the other terms (marked in black in \eqref{eq:DGSEM_MHD_3D}) leads to
\begin{align*}
\sum_{i=0}^N \entVar_{ijk}^T \state{F}_{ijk}^{a,\DG}
=&
	\ \ \  \sum_{j,k=0}^N \omega_{jk} \left(
	\numflux{f}^S_{(0,L)jk} -\numflux{f}^S_{(N,R)jk} + \frac{1}{2} \left( r_{(L,0)jk} + r_{(N,R)jk} \right) 
	\right)
	\\&
	+ \sum_{i,k=0}^N \omega_{ik} \left(
	\numflux{f}^S_{i(0,L)k} -\numflux{f}^S_{i(N,R)k} + \frac{1}{2} \left( r_{i(L,0)k} + r_{i(N,R)k} \right) 
	\right)
	\\&
	+ \sum_{i,j=0}^N \omega_{ij} \left(
	\numflux{f}^S_{ij(0,L)} -\numflux{f}^S_{ij(N,R)} + \frac{1}{2} \left( r_{ij(L,0)} + r_{ij(N,R)} \right) 
	\right)
	\\
	&
	+ \sum_{i,j,k,m=0}^N \omega_{ijk}
	\left(
	D_{im}  r_{(i,m)jk} + D_{jm} r_{i(j,m)k} + D_{km} r_{ij(k,m)}
	\right)
	\\
	&
	- \sum_{i,j,k=0}^N \omega_{ijk} 
	\underbrace{	
	\vec{\Psi}_{ijk} \cdot
	\sum_{m=0}^N
	\left(
	D_{im} (J\vec{a}^1)_{mjk} + D_{jm} (J\vec{a}^2)_{imk} + D_{km} (J\vec{a}^3)_{ijm}
	\right)
	}_{=0 },
\end{align*}
where the last term is equal to zero if the discrete metric identities,
\begin{equation} \label{eq:DiscMetricIdentities}
\left(
\sum_{l=1}^3 \bigpartialderiv{}{\xi^l} (J{a}^l_d)
\right)_{ijk} = 0, ~~~~~~ d \in \{ 1,2,3 \},
\end{equation}
hold for all the nodes of the element, $i,j,k \in \{ 0, \ldots, N \}$.
For example, they can be computed from a discrete curl \cite{Kopriva2006}.

\end{proof}

\subsection{Subcell Native LGL Finite Volumes for Curvilinear Meshes}

\subsubsection{Discretization}

The native LGL FV discretization of the advective and non-conservative terms on 3D curvilinear meshes reads
\begin{align} \label{eq:FV_MHD_3D}
J_{ijk} \omega_{ijk} \dot{\state{u}}^{\FV}_{ijk}
=& \state{F}_{ijk}^{a,\FV} \\
=& \omega_{jk} \left( \numfluxb{f}^a_{(i-1,i)jk} - \numfluxb{f}^a_{(i,i+1)jk} \right)
+ \omega_{ik} \left( \numfluxb{f}^a_{i(j-1,j)k} - \numfluxb{f}^a_{i(j,j+1)k} \right)
+ \omega_{ij} \left( \numfluxb{f}^a_{ij(k-1,k)} - \numfluxb{f}^a_{ij(k,k+1)} \right) 
\nonumber \\
&+ 
\omega_{jk} \left(
  \numnonconsD{\Jan}_{(i,i-1)jk}
- \numnonconsD{\Jan}_{(i,i+1)jk}
\right)
+ \omega_{ik} \left(
  \numnonconsD{\Jan}_{i(j,j-1)k}
- \numnonconsD{\Jan}_{i(j,j+1)k}
\right)
+ \omega_{ij} \left(
  \numnonconsD{\Jan}_{ij(k,k-1)}
- \numnonconsD{\Jan}_{ij(k,k+1)}
\right),
\end{align}
where we define the numerical fluxes and non-conservative terms as
\begin{align}
\numfluxb{f}^a_{(i,m)jk} :=&
\norm{\vn_{(i,m)jk}} \numfluxb{f}^a \left(\state{u}_{ijk}, \state{u}_{mjk}; \frac{\vn_{(i,m)jk}}{\norm{\vn_{(i,m)jk}}} \right) 
\\
\numnonconsD{\Jan}_{(i,m)jk} :=& 
\vn_{(i,m)jk} 
\cdot 
\left[
\avg{\vec{B}}_{(i,m)jk}  \phiMHD_{ijk}  +
\phiGLM_{ijk} \avg{\psi}_{(i,m)jk}
\right],
\end{align}
and we adopt the subcell metrics derived by \citet{Hennemann2020}, which ensure a water-tight subcell FV discretization,
\begin{align} \label{eq:SubcellMetrics}
\vn_{(i,i+1)jk}  &= J\vec{a}^1_{0jk} + \sum_{l=0}^{i} \sum_{m=0}^{N} Q_{lm} (J\vec{a}^1)_{mjk}, &
\vn_{i(j,j+1)k}  &= J\vec{a}^2_{i0k} + \sum_{l=0}^{j} \sum_{m=0}^{N} Q_{lm} (J\vec{a}^2)_{imk},
\nonumber\\
\vn_{ij(k,k+1)}  &= J\vec{a}^3_{ij0} + \sum_{l=0}^{k} \sum_{m=0}^{N} Q_{lm} (J\vec{a}^3)_{ijm}.
\end{align}

We remark that the subcell metrics fulfill the symmetry property, and that they reduce to the normal vectors (scaled by the surface area) on the element boundaries, where they do not point outward, but in the direction of the reference element coordinates.

\subsubsection{Entropy Balance}

We need to define a new numerical entropy flux and a new entropy production that are compatible with our native LGL FV method on 3D curvilinear meshes.

\begin{definition}[Numerical entropy flux for the 3D LGL FV]
The numerical entropy flux from the degree of freedom $ijk$ to $mjk$ is defined as
\begin{equation} \label{eq:numEntFlux_FV_3D} 
\numflux{f}^S_{(i,m)jk} = 
\avg{\entVar}_{(i,m)jk}^T \numfluxb{f}^a_{(i,m)jk} 
+ \frac{1}{2} \entVar^T_{ijk} \numnonconsD{\stateG{\Phi}}_{(i,m)jk}
+ \frac{1}{2} \entVar^T_{mjk} \numnonconsD{\stateG{\Phi}}_{(m,i)jk}
- \vn_{(i,m)jk} \cdot \avg{\vec{\Psi}}_{(i,m)jk},
\end{equation}
which fulfills the symmetric conservative property, \eqref{eq:conservativeProp}.
\end{definition}

\begin{definition}[Entropy production for the 3D LGL FV]
The entropy production on an interface between the degrees of freedom $j$ and $k$ is defined as
\begin{equation} \label{eq:EntropyDissipation_FV_3D}
r_{(i,m)jk} =
\jump{\entVar}_{(i,m)jk}^T 
\numfluxb{f}^a_{(i,m)jk}
+ \entVar^T_{mjk} \numnonconsD{\stateG{\Phi}}_{(m,i)jk}
- \entVar^T_{ijk} \numnonconsD{\stateG{\Phi}}_{(i,m)jk}
- \vn_{(i,m)jk} \cdot \jump{\vec{\Psi}}_{(i,m)jk}.
\end{equation}

\end{definition}

Note that, on an element boundary, the numerical entropy flux, \eqref{eq:numEntFlux_FV_3D}, and the entropy production, \eqref{eq:EntropyDissipation_FV_3D}, of the 3D native LGL FV method are equal to the 3D DGSEM definitions, \eqref{eq:numEntFlux_3D} and \eqref{eq:EntropyDissipation_3D}, respectively.

\begin{lemma} \label{lemma:EntropyFV_3D}
The semi-discrete entropy balance of the 3D LGL FV discretization on curvilinear meshes of the GLM-MHD equations, \eqref{eq:FV_MHD_3D}, for a subcell reads
\begin{align*}
\entVar_{ijk}^T \state{F}_{ijk}^{a,\FV}
=&
	\ \ \  \omega_{jk} \left(
	\numflux{f}^S_{(i-1,i)jk} -\numflux{f}^S_{(i,i+1)jk} + \frac{1}{2} \left( r_{(i-1,i)jk} + r_{(i,i+1)jk} \right) 
	\right)
	\\&
	+ \omega_{ik} \left(
	\numflux{f}^S_{i(j-1,j)k} -\numflux{f}^S_{i(j,j+1)k} + \frac{1}{2} \left( r_{i(j-1,j)k} + r_{i(j,j+1)k} \right) 
	\right)
	\\&
	+ \omega_{ij} \left(
	\numflux{f}^S_{ij(k-1,k)} -\numflux{f}^S_{ij(k,k+1)} + \frac{1}{2} \left( r_{ij((k-1,k)} + r_{ij(k,k+1)} \right) 
	\right),
\end{align*}
where the numerical entropy flux and the entropy production are consistent with the FV definitions,  \eqref{eq:numEntFlux_FV_3D} and \eqref{eq:EntropyDissipation_FV_3D}, respectively.
\end{lemma}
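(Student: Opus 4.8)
\medskip
\noindent\textbf{Proof plan.}
The idea is to reduce the three-dimensional statement to three copies of the one-dimensional computation already carried out in the proof of Lemma~\ref{lemma:EntropyFV} (Appendix~\ref{sec:Proof_FV_1D}), exploiting that the subcell update \eqref{eq:FV_MHD_3D}, up to the metric weights $\vn$, is a superposition of three independent one-dimensional finite volume updates along the reference coordinate directions. First I would contract \eqref{eq:FV_MHD_3D} from the left with $\entVar_{ijk}^T$, use the chain rule $\entVar_{ijk}^T\dot{\state{u}}^{\FV}_{ijk}=\dot S_{ijk}$ so that the left-hand side is $\entVar_{ijk}^T\state{F}_{ijk}^{a,\FV}$, and split the right-hand side into the $\xi^1$-, $\xi^2$- and $\xi^3$-blocks carrying the prefactors $\omega_{jk}$, $\omega_{ik}$ and $\omega_{ij}$, respectively.

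For the $\xi^1$-block one is left with $\entVar_{ijk}^T\bigl(\numfluxb{f}^a_{(i-1,i)jk}-\numfluxb{f}^a_{(i,i+1)jk}+\numnonconsD{\Jan}_{(i,i-1)jk}-\numnonconsD{\Jan}_{(i,i+1)jk}\bigr)$. Mimicking Appendix~\ref{sec:Proof_FV_1D}, I would add and subtract the ``mirrored'' interface contributions at the two subcell faces: the contractions of $\numfluxb{f}^a$ and $\numnonconsD{\Jan}$ against the neighbouring node values $\entVar_{i\mp 1,jk}$, together with the metric-weighted flux potentials $\vn_{(i-1,i)jk}\cdot\vec{\Psi}_{i-1,jk}$ and $\vn_{(i,i+1)jk}\cdot\vec{\Psi}_{i+1,jk}$. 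Using the symmetry of $\numfluxb{f}^a$ and of the subcell metrics $\vn$, the added copy combines with the original terms into the telescoping difference $\numflux{f}^S_{(i-1,i)jk}-\numflux{f}^S_{(i,i+1)jk}$ of numerical entropy fluxes \eqref{eq:numEntFlux_FV_3D}, plus the antisymmetric remainder $\frac{1}{2}\bigl(r_{(i-1,i)jk}+r_{(i,i+1)jk}\bigr)$ with the entropy productions \eqref{eq:EntropyDissipation_FV_3D}; here one uses $r_{(m,i)jk}=-r_{(i,m)jk}$, which follows from the antisymmetry of the jump together with the index swap in the non-conservative term. This step is the 1D computation of Appendix~\ref{sec:Proof_FV_1D} verbatim, with each scalar potential jump $\jump{\Psi}$ replaced by the inner product $\vn_{(\cdot,\cdot)jk}\cdot\jump{\vec{\Psi}}_{(\cdot,\cdot)jk}$, and the outer states serving as the mirror nodes whenever the subcell lies on an element boundary.

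The one genuinely new feature is that, since $\vn_{(i-1,i)jk}\neq\vn_{(i,i+1)jk}$, the $\vec{\Psi}_{ijk}$ contributions no longer cancel within a single direction, as they do in 1D; each direction instead leaves a residual, and for $\xi^1$ it is $\bigl(\vn_{(i,i+1)jk}-\vn_{(i-1,i)jk}\bigr)\cdot\vec{\Psi}_{ijk}$ (and analogously for $\xi^2$, $\xi^3$). The hard part --- and the key step --- is to show that these three residuals add to zero. From the subcell-metric definition \eqref{eq:SubcellMetrics} one has $\vn_{(i,i+1)jk}-\vn_{(i-1,i)jk}=\sum_{m=0}^{N}Q_{im}(J\vec{a}^1)_{mjk}$, and similarly $\vn_{i(j,j+1)k}-\vn_{i(j-1,j)k}=\sum_{m=0}^{N}Q_{jm}(J\vec{a}^2)_{imk}$ and $\vn_{ij(k,k+1)}-\vn_{ij(k-1,k)}=\sum_{m=0}^{N}Q_{km}(J\vec{a}^3)_{ijm}$; multiplying by the respective weights $\omega_{jk}$, $\omega_{ik}$, $\omega_{ij}$ and using $Q_{lm}=\omega_l D_{lm}$, the three residuals collapse into $\omega_{ijk}\,\vec{\Psi}_{ijk}\cdot\sum_{m=0}^{N}\bigl(D_{im}(J\vec{a}^1)_{mjk}+D_{jm}(J\vec{a}^2)_{imk}+D_{km}(J\vec{a}^3)_{ijm}\bigr)$, which vanishes node-wise by the discrete metric identities \eqref{eq:DiscMetricIdentities}, exactly as at the end of the proof of Lemma~\ref{lemma:EntropyDGSEM_3D}. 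Collecting the three telescoping blocks then yields the claimed identity. The only real obstacles are tracking the metric vectors $\vn$ carefully through the add-and-subtract manipulation and recognising that the leftover flux-potential contributions assemble into a discrete divergence of the contravariant metrics; the rest is the routine algebra of Appendix~\ref{sec:Proof_FV_1D}.
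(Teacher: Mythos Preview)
Your proposal is correct and follows essentially the same approach as the paper: contract \eqref{eq:FV_MHD_3D} with $\entVar_{ijk}^T$, add and subtract the mirrored interface contributions at each of the six subcell faces to recover the numerical entropy fluxes \eqref{eq:numEntFlux_FV_3D} and productions \eqref{eq:EntropyDissipation_FV_3D}, and then observe that the leftover $\vec{\Psi}_{ijk}$ terms assemble into $\omega_{ijk}\,\vec{\Psi}_{ijk}\cdot\sum_{m}\bigl(D_{im}(J\vec{a}^1)_{mjk}+D_{jm}(J\vec{a}^2)_{imk}+D_{km}(J\vec{a}^3)_{ijm}\bigr)$ via the subcell-metric differences from \eqref{eq:SubcellMetrics}, which vanishes by the discrete metric identities \eqref{eq:DiscMetricIdentities}. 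You have also correctly identified the only genuinely three-dimensional subtlety, namely that the $\vec{\Psi}_{ijk}$ contributions cancel only after summing all three directions.
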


\begin{proof}
After contracting \eqref{eq:FV_MHD_3D} with the entropy variables, we follow the same strategy as in previous proofs, where we sum and subtract terms on each subcell interface.
In this case we have interfaces for the subcells at $i \pm 1$, $j \pm 1$, and $k \pm 1$, so we have
\begin{align*}
\entVar_{ijk}^T \state{F}_{ijk}^{a,\FV}
=& \entVar_{ijk}^T \left[
\omega_{jk} \left( \numfluxb{f}^a_{(i-1,i)jk} - \numfluxb{f}^a_{(i,i+1)jk} \right)
+ \omega_{ik} \left( \numfluxb{f}^a_{i(j-1,j)k} - \numfluxb{f}^a_{i(j,j+1)k} \right)
+ \omega_{ij} \left( \numfluxb{f}^a_{ij(k-1,k)} - \numfluxb{f}^a_{ij(k,k+1)} \right) 
\right.
 \\
&+ 
\left.
\omega_{jk} \left(
  \numnonconsD{\Jan}_{(i,i-1)jk}
- \numnonconsD{\Jan}_{(i,i+1)jk}
\right)
+ \omega_{ik} \left(
  \numnonconsD{\Jan}_{i(j,j-1)k}
- \numnonconsD{\Jan}_{i(j,j+1)k}
\right)
+ \omega_{ij} \left(
  \numnonconsD{\Jan}_{ij(k,k-1)}
- \numnonconsD{\Jan}_{ij(k,k+1)}
\right)
\right]	
	\\
	&
	+ \frac{\omega_{jk}}{2} 
	\left(
	\entVar^T_{(i-1)jk}  \numfluxb{f}^a_{(i-1,i)jk} 
	+ \entVar^T_{(i-1)jk}  \numnonconsD{\Jan}_{(i-1,i)jk} 
	- \vn_{(i-1,i)jk} \cdot \vec{\Psi}_{(i-1)jk}
	+ 2 \vn_{(i-1,i)jk} \cdot \vec{\Psi}_{ijk}
	\right)
	\\
	&
	- \frac{\omega_{jk}}{2} \left(
	\entVar^T_{(i-1)jk}  \numfluxb{f}^a_{(i-1,i)jk} 
	+ \entVar^T_{(i-1)jk}  \numnonconsD{\Jan}_{(i-1,i)jk} 
	- \vn_{(i-1,i)jk} \cdot \vec{\Psi}_{(i-1)jk}
	+ 2\vn_{(i-1,i)jk} \cdot \vec{\Psi}_{ijk} 
	 \right)
	 \\
	&
	\cdots
	 \\
	&
	+ \frac{\omega_{ij}}{2} 
	\left(
	- \entVar^T_{ij(k+1)}  \numfluxb{f}^a_{(k+1,k)ij} 
	- \entVar^T_{ij(k+1)}  \numnonconsD{\Jan}_{(k+1,k)ij} 
	+ \vn_{ij(k,k+1)} \cdot \vec{\Psi}_{ij(k+1)}
	- 2\vn_{ij(k,k+1)} \cdot \vec{\Psi}_{ijk}
	\right)
	\\
	 &
	- \frac{\omega_{ij}}{2} 
	\left(
	- \entVar^T_{ij(k+1)}  \numfluxb{f}^a_{ij(k+1,k)} 
	- \entVar^T_{ij(k+1)}  \numnonconsD{\Jan}_{ij(k+1,k)} 
	+ \vn_{ij(k,k+1)} \cdot \vec{\Psi}_{ij(k+1)}
	- 2\vn_{ij(k,k+1)} \cdot \vec{\Psi}_{ijk}
	\right),
	\numberthis \label{eq:StrategySumSubtract}
\end{align*}
where, for readability, we used the ellipsis ($\ldots$) for the terms that belong to the interfaces $i + 1$, $j \pm 1$, and $k - 1$.
We now simplify using the definitions of the numerical entropy flux \eqref{eq:numEntFlux_FV_3D}, and the entropy production \eqref{eq:EntropyDissipation_FV_3D} to obtain
\begin{align*}
\entVar_{ijk}^T \state{F}_{ijk}^{a,\FV}
=&
	\ \ \  \omega_{jk} \left(
	\numflux{f}^S_{(i-1,i)jk} -\numflux{f}^S_{(i,i+1)jk} + \frac{1}{2} \left( r_{(i-1,i)jk} + r_{(i,i+1)jk} \right) 
	\right)
	\\&
	+ \omega_{ik} \left(
	\numflux{f}^S_{i(j-1,j)k} -\numflux{f}^S_{i(j,j+1)k} + \frac{1}{2} \left( r_{i(j-1,j)k} + r_{i(j,j+1)k} \right) 
	\right)
	\\&
	+ \omega_{ij} \left(
	\numflux{f}^S_{ij(k-1,k)} -\numflux{f}^S_{ij(k,k+1)} + \frac{1}{2} \left( r_{ij((k-1,k)} + r_{ij(k,k+1)} \right) 
	\right) \\
	&+
	\underbrace{
	\vec{\Psi}_{ijk} \cdot 
	\left[
	\omega_{jk} \left(\vn_{(i,i+1)jk} - \vn_{(i-1,i)jk} \right)+
	\omega_{ik} \left(\vn_{i(j,j+1)k} - \vn_{i(j-1,j)k} \right)+
	\omega_{ij} \left(\vn_{ij(k,k+1)} - \vn_{ij(k-1,k)} \right)
	\right]
	}_{(e)=0}.
\end{align*}

The last term is equal to zero if the mesh is watertight at the subcell level.
Replacing the definition of the subcell metrics, \eqref{eq:SubcellMetrics}, in $(e)$ we obtain
\begin{align*}
(e) &=
\vec{\Psi}_{ijk} \cdot 
	\left[
	\omega_{jk} \sum_{m=0} Q_{im} (J\vec{a}^1)_{mjk} +
	\omega_{ik} \sum_{m=0} Q_{jm} (J\vec{a}^2)_{imk} +
	\omega_{ij} \sum_{m=0} Q_{km} (J\vec{a}^3)_{ijm}
	\right]
	\\
&= 
 \omega_{ijk} \vec{\Psi}_{ijk} \cdot 
    \sum_{m=0} D_{im} (J\vec{a}^1)_{mjk} +
			   D_{jm} (J\vec{a}^2)_{imk} +
			   D_{km} (J\vec{a}^3)_{ijm},
\end{align*}
which is again equal to zero if the discrete metric identities of the DGSEM, \eqref{eq:DiscMetricIdentities}, hold.

\end{proof}

\subsection{Hybrid FV/DGSEM Scheme}

The hybrid FV/DGSEM scheme reads
\begin{equation} \label{eq:BlendedScheme3D}
J_{ijk} \omega_{ijk} \dot{\state{u}}_{ijk} = 
(1-\alpha) \state{F}_{ijk}^{a,\DG} 
+ \alpha \state{F}_{ijk}^{a,\FV} 
- \state{F}_{ijk}^{\nu,\DG}.
\end{equation}

Lemma \ref{lemma:WillItBlend3D?} is the extension of Lemma \ref{lemma:WillItBlend?} to 3D.

\begin{lemma} \label{lemma:WillItBlend3D?}
The semi-discrete entropy balance of a discretization scheme for a non-conservative system that is obtained by blending two schemes at the element level,
\begin{equation}  \label{eq:Blending3D}
J_{ijk} \omega_{ijk} \dot{\state{u}}_{ijk} = 
(1-\alpha) \state{F}_{ijk}^{\DG} 
+ \alpha \state{F}_{ijk}^{\FV}, ~~~ \forall i,j,k \in [0,N],
\end{equation}
where $\alpha$ is an element-local blending coefficient, and both of the schemes are of the form
\begin{align}  \label{eq:SchemeToBlend3D}
\state{F}^{m}_{ijk} =
& \state{F}^m_{\elemDom,ijk}
\nonumber\\
+&
\omega_{jk} \left(
  \delta_{i0} \left[\numfluxb{f}^a_{(0,L)jk} + \numnonconsD{\Jan}_{(0,L)jk} \right]
- \delta_{iN} \left[\numfluxb{f}^a_{(N,R)jk} + \numnonconsD{\Jan}_{(N,R)jk} \right] \right)
\nonumber\\
+& \omega_{ik} \left(
  \delta_{j0} \left[\numfluxb{f}^a_{i(0,L)k} + \numnonconsD{\Jan}_{i(0,L)k} \right]
- \delta_{jN} \left[\numfluxb{f}^a_{i(N,R)k} + \numnonconsD{\Jan}_{i(N,R)k} \right] \right)
\nonumber\\
+& \omega_{ij} \left(
  \delta_{k0} \left[\numfluxb{f}^a_{ij(0,L)} + \numnonconsD{\Jan}_{ij(0,L)} \right]
- \delta_{kN} \left[\numfluxb{f}^a_{ij(N,R)} + \numnonconsD{\Jan}_{ij(N,R)} \right] \right),
~~~~~~~~ m=\FV, \DG.
\end{align}
with $\state{F}^m_{\elemDom,ijk}$ being any discretization terms that depend on the inner states of the element, is 
\begin{equation}
\sum_{i,j,k=0}^N J_{ijk} \omega_{ijk} \dot S_{ijk}=
\dot{S}_{\partial \elemDom} 
+ (1-\alpha) \dot{S}^{\DG}_{\elemDom} 
+ \alpha \dot{S}^{\FV}_{\elemDom},
\end{equation}
where $\dot{S}^m_{\elemDom}$ is the entropy production of the scheme $m$ inside the element, which only depends on inner states, and $\dot{S}_{\partial \elemDom}$ gathers the entropy flux and production on the boundaries of the element, which are intrinsic to the choice of the surface numerical flux function and the surface non-conservative term,
\begin{align*}
\dot{S}_{\partial \elemDom} =&
\ \ \  \sum_{j,k=0}^N \omega_{jk} \left(
	\numflux{f}^S_{(0,L)jk} -\numflux{f}^S_{(N,R)jk} + \frac{1}{2} \left( r_{(L,0)jk} + r_{(N,R)jk} \right) 
	\right)
	\\&
	+ \sum_{i,k=0}^N \omega_{ik} \left(
	\numflux{f}^S_{i(0,L)k} -\numflux{f}^S_{i(N,R)k} + \frac{1}{2} \left( r_{i(L,0)k} + r_{i(N,R)k} \right) 
	\right)
	\\&
	+ \sum_{i,j=0}^N \omega_{ij} \left(
	\numflux{f}^S_{ij(0,L)} -\numflux{f}^S_{ij(N,R)} + \frac{1}{2} \left( r_{ij(L,0)} + r_{ij(N,R)} \right) 
	\right).
\end{align*}

\end{lemma}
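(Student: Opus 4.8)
The plan is to repeat, essentially verbatim, the argument used to prove Lemma~\ref{lemma:WillItBlend?} in 1D, now in the tensor-product setting on curvilinear meshes. Two structural facts do all the work: contraction with the entropy variables is linear in the discretization operator, and the two schemes to be blended have, by the hypothesis in \eqref{eq:SchemeToBlend3D}, \emph{exactly the same} element-surface terms. Everything else that differs between the $\DG$ and $\FV$ operators is hidden inside the interior terms $\state{F}^m_{\elemDom,ijk}$, which by assumption depend only on inner degrees of freedom.

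First I would take a single scheme $m\in\{\DG,\FV\}$ of the form \eqref{eq:SchemeToBlend3D}, contract with $\entVar_{ijk}^T$, and sum over all $i,j,k\in\{0,\dots,N\}$. The right-hand side splits into the interior contribution $\sum_{i,j,k}\entVar_{ijk}^T\state{F}^m_{\elemDom,ijk}$ together with six element-boundary contributions, one per face, of the schematic form $\pm\,\omega_{jk}\,\entVar_{0jk}^T\big[\numfluxb{f}^a_{(0,L)jk}+\numnonconsD{\Jan}_{(0,L)jk}\big]$ and its counterparts at $i=N$, $j=0,N$, $k=0,N$. Since these six boundary contributions are identical for $m=\DG$ and $m=\FV$, they will ultimately produce the same $\dot{S}_{\partial\elemDom}$ for both schemes.

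Next I would apply the sum-and-subtract manipulation already used in \eqref{eq:ManupulationBorders} and \eqref{eq:EntropyDGSEM_Surf3D} to each of the six faces: on the $\xi^1$ faces, add and subtract $\tfrac12\omega_{jk}\big(\entVar_{Ljk}^T\numfluxb{f}^a_{(L,0)jk}+\entVar_{Ljk}^T\numnonconsD{\Jan}_{(L,0)jk}-(J\vec{a}^1)_{0jk}\cdot\vec{\Psi}_{Ljk}\big)$ at $i=0$ and the analogous term at $i=N$, then recognise the 3D numerical entropy flux \eqref{eq:numEntFlux_3D} and entropy production \eqref{eq:EntropyDissipation_3D}; this turns the face contribution into $\omega_{jk}\big(\numflux{f}^S_{(0,L)jk}-\numflux{f}^S_{(N,R)jk}+\tfrac12(r_{(L,0)jk}+r_{(N,R)jk})\big)$ plus the leftover surface potential terms $\omega_{jk}\big((J\vec{a}^1)_{0jk}\cdot\vec{\Psi}_{0jk}-(J\vec{a}^1)_{Njk}\cdot\vec{\Psi}_{Njk}\big)$. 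Doing the same on the $\xi^2$ and $\xi^3$ faces and gathering, the face entropy-flux/production terms assemble into exactly $\dot{S}_{\partial\elemDom}$ as written in the statement, while the leftover $\vec{\Psi}$ surface terms get absorbed into $\dot{S}^m_{\elemDom}$, which — as the computations in the proofs of Lemmas~\ref{lemma:EntropyDGSEM_3D} and \ref{lemma:EntropyFV_3D} show (see \eqref{eq:EntropyDGSEM_Volume3D}--\eqref{eq:EntropyDGSEM_Xi_3D} and \eqref{eq:StrategySumSubtract}) — reduces to a $Q$-weighted sum (respectively a sum over subcell interfaces) of entropy productions depending only on inner states. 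Since this decomposition holds identically for $m=\DG$ and $m=\FV$ with the \emph{same} $\dot{S}_{\partial\elemDom}$, contracting the blended scheme \eqref{eq:Blending3D} and summing over the element yields, by linearity,
\[
\sum_{i,j,k=0}^N J_{ijk}\omega_{ijk}\dot S_{ijk}
= (1-\alpha)\big(\dot{S}_{\partial\elemDom}+\dot{S}^{\DG}_{\elemDom}\big)
+ \alpha\big(\dot{S}_{\partial\elemDom}+\dot{S}^{\FV}_{\elemDom}\big)
= \dot{S}_{\partial\elemDom} + (1-\alpha)\dot{S}^{\DG}_{\elemDom} + \alpha\dot{S}^{\FV}_{\elemDom},
\]
using $(1-\alpha)+\alpha=1$, which is the claim.

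The main obstacle is not conceptual but bookkeeping: one must check that, when the sum-and-subtract trick is carried out face by face in all three reference directions, the surface potential terms $(J\vec{a}^l)\cdot\vec{\Psi}$ it generates on the element boundary exactly cancel — with the opposite sign — the boundary potential terms produced by the volume manipulation of each individual scheme, so that $\dot{S}^m_{\elemDom}$ is genuinely a function of interior degrees of freedom only and the decomposition is unambiguous. That cancellation is precisely what was verified for the DGSEM in \eqref{eq:EntropyDGSEM_Volume3D}--\eqref{eq:EntropyDGSEM_Xi_3D} and for the native LGL FV in \eqref{eq:StrategySumSubtract} (it rests on the discrete metric identities \eqref{eq:DiscMetricIdentities} and the watertightness of the subcell metrics); once it is taken for granted from those two lemmas, the present statement follows by the one-line linearity argument above, exactly as in the 1D case.
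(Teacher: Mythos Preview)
Your proposal is correct and follows essentially the same route as the paper: contract a generic scheme of the form \eqref{eq:SchemeToBlend3D} with the entropy variables, sum over the element, apply the sum-and-subtract trick on each of the six faces to expose $\dot{S}_{\partial\elemDom}$ via \eqref{eq:numEntFlux_3D}--\eqref{eq:EntropyDissipation_3D}, absorb the leftover $(J\vec{a}^l)\cdot\vec{\Psi}$ boundary-potential terms into $\dot{S}^m_{\elemDom}$, and conclude by linearity. One minor over-complication: you invoke Lemmas~\ref{lemma:EntropyDGSEM_3D}--\ref{lemma:EntropyFV_3D} and the metric identities to argue that $\dot{S}^m_{\elemDom}$ depends only on inner states, but the paper's proof does not need this --- it simply \emph{defines} $\dot{S}^m_{\elemDom}$ as $\sum_{i,j,k}\entVar_{ijk}^T\state{F}^m_{\elemDom,ijk}$ plus the six $(J\vec{a}^l)\cdot\vec{\Psi}$ boundary terms, all of which are manifestly evaluated at interior nodes of the element, so the dependence on inner states is immediate and the lemma holds for \emph{any} $\state{F}^m_{\elemDom,ijk}$, not just the specific DGSEM or FV choices.
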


\begin{proof}
We start by contracting \eqref{eq:SchemeToBlend3D} for a scheme $m$ with the entropy variables and integrate over the element to obtain 
\begin{align} 
\sum_{i,j,k=0}^N \entVar_{ijk}^T \state{F}^{m}_{ijk} =&
\sum_{i,j,k=0}^N \entVar_{ijk}^T \state{F}^{m}_{\elemDom,ijk}
\nonumber\\
+&
\sum_{j,k=0}^N
\omega_{jk} \left(
  \entVar^T_{0jk} \left[\numfluxb{f}^a_{(0,L)jk} + \numnonconsD{\Jan}_{(0,L)jk} \right]
- \entVar^T_{Njk} \left[\numfluxb{f}^a_{(N,R)jk} + \numnonconsD{\Jan}_{(N,R)jk} \right] \right)
\nonumber\\
+& 
\sum_{i,k=0}^N
\omega_{ik} \left(
  \entVar^T_{i0k} \left[\numfluxb{f}^a_{i(0,L)k} + \numnonconsD{\Jan}_{i(0,L)k} \right]
- \entVar^T_{iNk} \left[\numfluxb{f}^a_{i(N,R)k} + \numnonconsD{\Jan}_{i(N,R)k} \right] \right)
\nonumber\\
+& 
\sum_{i,j=0}^N
\omega_{ij} \left(
  \entVar^T_{ij0} \left[\numfluxb{f}^a_{ij(0,L)} + \numnonconsD{\Jan}_{ij(0,L)} \right]
- \entVar^T_{ijN} \left[\numfluxb{f}^a_{ij(N,R)} + \numnonconsD{\Jan}_{ij(N,R)} \right] \right).
\end{align}

We use the strategy of summing and subtracting terms again for all the boundary degrees of freedom of the element (see e.g. \eqref{eq:StrategySumSubtract}) to obtain
\begin{align} 
\sum_{i,j,k=0}^N \entVar_{ijk}^T \state{F}^{m}_{ijk} =
\dot{S}^m_{\elemDom} + \dot{S}_{\partial \elemDom}
\end{align}
where
\begin{align*}
\dot{S}^m_{\elemDom} :=
\sum_{i,j,k=0}^N \entVar_{ijk}^T \state{F}^{m}_{\elemDom,ijk}
&+
\sum_{j,k=0}^N \omega_{jk} \left[
	 { (J\vec{a}^1)_{0jk} \cdot \vec{\Psi}_{0jk}}
	-{ (J\vec{a}^1)_{Njk} \cdot \vec{\Psi}_{Njk}}
\right]
\\
&+
\sum_{i,k=0}^N \omega_{ik} \left[
	 { (J\vec{a}^2)_{i0k} \cdot \vec{\Psi}_{i0k}}
	-{ (J\vec{a}^2)_{iNk} \cdot \vec{\Psi}_{iNk}}
\right]
\\
&+
\sum_{i,j=0}^N \omega_{ij} \left[
	 { (J\vec{a}^3)_{ij0} \cdot \vec{\Psi}_{ij0}}
	-{ (J\vec{a}^3)_{ijN} \cdot \vec{\Psi}_{ijN}}
\right].
\end{align*}

As a result, the entropy balance of the hybrid scheme is
\begin{equation}
\sum_{i,j,k=0}^N J_{ijk} \omega_{ijk} \dot S_{ijk} =
\dot{S}_{\partial \elemDom} 
+ (1-\alpha) \dot{S}^{\DG}_{\elemDom} 
+ \alpha \dot{S}^{\FV}_{\elemDom},
\end{equation}
where we remark that $\dot{S}_{\partial \elemDom}$ is the same for all the schemes of the form \eqref{eq:SchemeToBlend3D}.

\end{proof}

The main consequence of Lemma \ref{lemma:WillItBlend3D?} is that the hybrid scheme is semi-discretely \textit{entropy consistent} with the blended schemes.

\end{document}